\theoremstyle{plain}
\newtheorem{thm}{Theorem}[section]
\newtheorem{lem}[thm]{Lemma}
\newtheorem{cor}[thm]{Corollary}
\newtheorem{conj}[thm]{Conjecture}
\newtheorem*{thm*}{Theorem}
\theoremstyle{definition}
\newtheorem{defn}[thm]{Definition}
\newtheorem{ex}[thm]{Example}
\newcommand{\R}{\mathbb{R}}
\newcommand{\dist}{\mathrm{dist}}
\newcommand{\lk}{\mathrm{lk}}
\begin{document}
% -------------------------------------------------------------------%

\title[Fair division and Sperner's lemma]{Fair division and generalizations of Sperner-\\ and KKM-type results}

% -------------------------------------------------------------------%

% -------------------------------------------------------------------%

\author[M. Asada \and F. Frick \and V. Pisharody \and M. Polevy \and D. Stoner \and L.H. Tsang \and Z. Wellner]{Megumi Asada \and Florian Frick \and Vivek Pisharody \and Maxwell Polevy \and\\ David Stoner \and Ling Hei Tsang \and Zoe Wellner}

\address[MA]{Department of Mathematics and Statistics, Williams College, Williamstown, MA 01267, USA}
\email{maa2@williams.edu}

\address[FF, VP, ZW]{Department of Mathematics, Cornell University, Ithaca, NY 14853, USA}
\email{\{ff238, vap52, zaw5\}@cornell.edu}

\address[MP]{Department of Mathematics, Northeastern University, Boston, MA 02115, USA}
\email{polevy.m@husky.neu.edu}

\address[DS]{Mathematics Department, Harvard University, 1 Oxford St, Cambridge, MA 02138, USA}
\email{dstoner@college.harvard.edu}

\address[LHT]{Department of Mathematics, The Chinese University of Hong Kong}
\email{henryt918@gmail.com}

\date{\today}
\maketitle

% --------------------------------------------------------------------------------------------------------------%

\begin{abstract}
\small
We treat problems of fair division, their various interconnections, 
and their relations to Sperner's lemma and the KKM theorem as well as their variants. We
prove extensions of Alon's necklace splitting result in certain regimes and relate it to hyperplane mass partitions. 
We show the existence of fair cake division and rental harmony in the sense of Su even in the absence of full
information. Furthermore, we extend Sperner's lemma and the KKM theorem to (colorful) quantitative versions
for polytopes and pseudomanifolds. For simplicial polytopes our results turn out to be improvements
over the earlier work of De Loera, Peterson, and Su on a polytopal version of Sperner's lemma. 
Moreover, our results extend the work of Musin on quantitative Sperner-type results for PL manifolds.
\end{abstract}

% --------------------------------------------------------------------------------------------------------------%

\section{Introduction}

The area of fair division comprises a class of mathematical problems where topology has found
some of its most striking applications. Usually one strives to partition an object and distribute it
among a set of agents under a certain fairness requirement. Examples include equipartitions of
measures in Euclidean space by affine hyperplanes, cutting up a necklace with $q$ types of beads
and partitioning the pieces into $k$ parts each containing the same number of beads of each kind,
and dividing a unit interval cake into $n$ pieces such that $n$ cake eaters will not be jealous of 
one another given their subjective preferences about the cake.
Here we treat these three incarnations of the fair division problem and their interconnections
as well as results from topological combinatorics that can be used to establish the existence
of fair divisions: the KKM theorem and Sperner's lemma. 

Section~\ref{sec:necklaces} is concerned with the necklace splitting problem. Two thieves can cut an unclasped
necklace with $q$ different types of beads in $q$ places in order to fairly divide the beads of each kind between
them; see Goldberg and West~\cite{goldberg1985}. Alon~\cite{alon1987} extended this to the case of $k$ thieves.
Here $(k-1)q$ cuts suffice to accomplish a fair division among the $k$ thieves.
We establish optimal necklace splitting results with additional constraints. That is, we require that adjacent
pieces of the necklace cannot be claimed by certain pairs of thieves while the optimal number of $(k-1)q$ cuts
still suffices to accomplish this division of the necklace.
In particular, we show that $k = 4$ thieves can pass the necklace around in a circle (where a change of direction
is allowed) while still fairly dividing it with $3q$ cuts, that is, two pairs of thieves (the diagonal pairs) will not receive adjacent pieces;
see Theorem~\ref{thm:cyclic-4-splitting}:
\begin{thm*}
	For $k=4$ thieves and a necklace with $q$ types of beads, there exists a cyclic $k$-splitting of size at most~$(k-1)q = 3q$.  
\end{thm*}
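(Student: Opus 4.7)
The plan is to reduce the four-thief cyclic problem to three applications of the two-thief Hobby--Rice (``ham sandwich for necklaces'') theorem -- i.e., the $k=2$ case of Alon's theorem -- nested so as to automatically respect the cyclic-adjacency constraint inside each super-block, leaving only a controllable parity condition at the super-block junctions.

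Concretely, partition the thieves into the two cyclically adjacent pairs $A=\{1,2\}$ and $B=\{3,4\}$ along the cycle $1\!-\!2\!-\!3\!-\!4\!-\!1$. First I apply Hobby--Rice to split the necklace fairly between the super-thieves $A$ and $B$ using $q$ cuts, producing alternating $A$- and $B$-blocks. Viewing the union of $A$-blocks as a sub-necklace, I apply Hobby--Rice again to split between thieves $1$ and $2$ with another $q$ cuts; symmetrically, $q$ cuts within the $B$-territory split $3$ from $4$. This uses exactly $3q$ cuts. Because both $\{1,2\}$ and $\{3,4\}$ are cyclically adjacent pairs, every label transition inside an $A$- or $B$-block is automatically cyclic-valid; a forbidden diagonal $(1,3)$ or $(2,4)$ can only arise at one of the $q$ junctions between adjacent super-blocks.

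It then remains to enforce the junction constraints: at each $A$--$B$ junction the endpoint pair must be $(1,4)$ or $(2,3)$, equivalently the two endpoint labels must have opposite parity. This parity is determined by the numbers of within-block sub-cuts in the two abutting super-blocks together with two global thief-flip choices (which of thieves $1,2$ is assigned to the odd-indexed sub-pieces of $A$, and likewise inside $B$). I would enforce the parity by upgrading each independent Hobby--Rice application to a continuous family of fair splits: the Hobby--Rice solution space is a sphere carrying a natural $\mathbb{Z}/2$-flip symmetry that interchanges the two thieves, so the product of the within-$A$ and within-$B$ spheres carries a $(\mathbb{Z}/2)^2$-action, and a joint equivariant Borsuk--Ulam/Dold argument on this product -- with target $\mathbb{R}^{2q+q}$ encoding the $2q$ fairness defects together with the $q$ junction-parity defects -- yields a configuration on which all $3q$ constraints vanish at once.

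The principal obstacle is showing that the $q$ junction parity constraints really can be absorbed without spending any extra cuts beyond the allotted $3q$. The crux is that the $\mathbb{Z}/2$-equivariance of Hobby--Rice is precisely what makes $q$ cuts (instead of $q+1$) suffice in the two-thief case, and the same equivariance, promoted jointly to $(\mathbb{Z}/2)^2$, must be strong enough to carry $q$ additional parity equations. A conceptually cleaner alternative would be to bypass the nesting entirely and run a single four-thief Alon-type argument with cyclic-valid labelings encoded as walks on the square $\mathbb{Z}/4\cong\{\pm 1\}^2$ (cyclic adjacency being Hamming distance $1$) and fairness expressed as the vanishing of the three sign-weighted integrals $\sum_i s_i|y_i|^2\mu_j$, $\sum_i t_i|y_i|^2\mu_j$, $\sum_i s_it_i|y_i|^2\mu_j$ on an appropriate $(\mathbb{Z}/2)^2$-equivariant sphere, where the walk constraint is imposed by parameterising the transitions rather than the signs directly.
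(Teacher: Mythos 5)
Your nested Hobby--Rice strategy has a genuine gap that does not seem repairable along the lines you propose, and the source of the trouble is a dimension count. You treat the $q$ junction-parity requirements as $q$ \emph{additional} constraints to be imposed after the three $q$-constraint fairness equations are already in play. But once the $A/B$ split is fixed, the Hobby--Rice ``solution space'' for the within-$A$ split is not a sphere: the configuration space $S^q$ is a sphere, while the locus of genuinely fair splits inside it is generically zero-dimensional. The only continuous degrees of freedom left over are the two global $\mathbb{Z}/2$ flips $1\leftrightarrow 2$ and $3\leftrightarrow 4$, and each of these flips \emph{all} $q$ junction parities simultaneously. So you control at most one bit of parity information, not $q$ bits. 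Concretely, it is easy to arrange a fair split with, say, the $A$-territory ending in a piece of parity $2$ at one junction and beginning with parity $1$ at another: then one junction is good and the other is bad, and no combination of the two global flips repairs both. If instead you try to make up the deficit by varying in $S^q\times S^q$ (or even $S^q\times S^q\times S^q$, including the $A/B$ split), the target $\mathbb{R}^{2q}$ (resp.\ $\mathbb{R}^{3q}$) of fairness defects already saturates the domain dimension, and there is no room for $q$ more coordinates encoding parity; moreover the parity defects are discrete, not sign-reversing under the obvious involutions, so they cannot simply be appended to a Borsuk--Ulam test map.

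The brief alternative you sketch at the end is much closer to a correct proof and is essentially the approach the paper takes: rather than enforcing the cyclic constraint as an extra equation, one sets up the problem so that the constraint comes for free from the geometry. Placing the necklace along the moment curve in $\mathbb{R}^{3q/2}$ (for $q$ a power of two) and equipartitioning the $q$ bead measures by two affine hyperplanes -- using the known case $\Delta(j,2)=\lceil 3j/2\rceil$ of the Gr\"unbaum--Hadwiger--Ramos problem -- assigns thieves to orthants. Two opposite orthants are separated by both hyperplanes, so any two moment-curve arcs landing in opposite orthants are separated by at least two cut points, i.e.\ by at least one intermediate piece; hence the diagonal pairs $\{1,3\}$ and $\{2,4\}$ can never receive adjacent pieces. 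The cut count is also automatic, since each hyperplane meets the moment curve in at most $3q/2$ points. The $(\mathbb{Z}/2)^2$-equivariant test map with the three sign-weighted integrals is exactly the internal machinery behind that equipartition theorem, so if you push the alternative through you will in effect have reproduced Ramos' proof and then the paper's reduction. One further ingredient you did not mention and would still need is the cut-adjustment lemma (the paper's Lemma 2.3): because the equipartition theorem applies to measures that assign zero mass to hyperplanes, beads must be smeared into intervals, and cuts that land inside a bead must then be shifted off the beads without changing the distribution order. The paper handles this with a cycle-cancellation argument in an auxiliary graph.
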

This properly extends Alon's result for $k = 4$ and any~$q$. The proof uses hyperplane mass partitions in Euclidean space
by placing the necklace along the moment curve, similar to Matou\v sek's proof~\cite{matousek2008} of the $k=2$ case.
We also extend Alon's result
for arbitrary~$k$ and $q=2$ types of beads with a combinatorial proof; see Theorem~\ref{thm:binary-splitting}:
\begin{thm*}
	Given a necklace with $q = 2$ kinds of beads and $k$ thieves, there exists a binary necklace splitting of size~${2(k-1)}$. 
\end{thm*}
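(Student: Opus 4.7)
The plan is to proceed by induction on the number of thieves $k$. The base case $k=1$ is trivial. For the inductive step, assuming the result for $k-1$ thieves, I aim to isolate a portion $J \subseteq [0,1]$ bounded by at most two cuts and containing exactly $1/k$ of each of the two bead types. Handing $J$ to one thief and applying the inductive hypothesis to the complement then yields $2(k-2)$ further cuts among the remaining $k-1$ thieves, for a total of $2(k-1)$ cuts.

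The crux is establishing the existence of such a $J$. I would temporarily glue the two endpoints of $[0,1]$ to form a circular necklace $S^1$ and run a sliding-window argument. For each starting point $a \in S^1$, let $A(a)$ denote the clockwise arc beginning at $a$ that contains exactly $n_1/k$ type-$1$ beads, and let $g(a)$ count the type-$2$ beads in $A(a)$. A Fubini-type calculation---equivalently, passing to the cumulative type-$1$ coordinate $u$, in which every $A(a)$ becomes a window of fixed $u$-length $n_1/k$---shows that the mean of $g$ over the full sweep is exactly $n_2/k$. Since $g$ takes integer values and only changes by $\pm 1$ at finitely many events, namely when an endpoint of $A(a)$ crosses a type-$2$ bead, a discrete intermediate-value argument forces $g$ to attain the integer value $n_2/k$ at some $a^\ast$. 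The arc $A(a^\ast)$ is the required $J$: if it avoids the gluing point it appears on $[0,1]$ as a single interior arc $[a,b]$ (two cuts), while if it straddles the gluing point it appears as $[0,b] \cup [a,1]$ (still exactly two cuts, now at interior positions).

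The main obstacle is the discrete intermediate-value step: its validity hinges on the unit-jump structure of $g$ together with the fact that $g$ returns to its initial value after a full sweep, forcing $g$ to lie both weakly above and weakly below its integer mean $n_2/k$ and therefore to assume this mean somewhere. Non-generic coincidences (two type-$2$ beads at equal $u$-positions, or both endpoints of $A(a)$ crossing type-$2$ beads simultaneously) are handled by an arbitrarily small perturbation of bead positions followed by a limiting argument. A secondary bookkeeping point is that when $J$ is removed the remainder is either one linear sub-arc (when $J$ straddled the gluing point) or two linear sub-arcs (when $J$ was interior); in the latter case we concatenate them into a single linear necklace with bead counts $n_1(k-1)/k$ and $n_2(k-1)/k$, to which the inductive hypothesis applies directly with $k-1$ thieves.
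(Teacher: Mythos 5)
Your sliding-window argument for extracting a balanced arc $J$ is sound and is in the same spirit as the paper's Lemma~\ref{lem:long-balanced} (the paper runs a sliding window over $b$ concatenated copies of the necklace; you pass to a circle and use the cumulative type-$1$ coordinate). But the proposal never addresses the \emph{binary} constraint, which is the entire content of the theorem: you must place the $k$ thieves on vertices of a hypercube of dimension $\lceil\log_2 k\rceil$ so that thieves receiving adjacent necklace pieces occupy adjacent hypercube vertices. You prove that $2(k-1)$ cuts suffice for a fair $k$-splitting (which for $q=2$ is just Alon's bound), and then silently assume the binary labelling can be attached.

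Moreover, the $1+(k-1)$ decomposition cannot be salvaged by bolting on a string assignment afterward. Consider the case where $J=[a,b]$ is an interior arc. You concatenate $[0,a]$ and $[b,1]$ and apply the inductive hypothesis to the resulting linear necklace. If the inductive splitting places a cut at the concatenation point, the piece ending at $a$ goes to some thief $u$ and the piece beginning at $b$ to some thief $v$ with $u,v$ at Hamming distance $1$ (they are adjacent in the concatenated necklace). After un-concatenating, the $J$-thief sits between $u$ and $v$ in the original necklace, so its string must be at Hamming distance $1$ from \emph{both} $u$ and $v$. But the hypercube graph is bipartite and triangle-free, so two adjacent vertices have no common neighbour. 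The obstruction is structural, not a bookkeeping oversight. (Even when the concatenation point is not a cut, or when $J$ straddles the endpoints so the remainder is a single arc, you still need the two pieces abutting $J$ to go to the \emph{same} thief; nothing in a plain induction on $k-1$ thieves forces that.)

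The paper gets around exactly this by (i) splitting into two balanced halves of sizes $\lfloor k/2\rfloor$ and $\lceil k/2\rceil$ rather than $1$ and $k-1$, (ii) strengthening the inductive hypothesis to require that the first and last pieces of each sub-splitting go to the same thief, which guarantees that the two points where the sub-necklaces meet involve the \emph{same} cross-pair $(t_x, u_y)$, and (iii) assigning strings $0\,\|\,b_i$ to the first group and $1\,\|\,(c_j\oplus b_x\oplus c_y)$ to the second, so that the unique cross-adjacency $t_x$--$u_y$ becomes $0\|b_x$ versus $1\|b_x$, a single-bit difference. All three ingredients are needed, and none of them appears in your proposal. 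You would need to both strengthen the inductive statement and change the decomposition before the string assignment can go through.
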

Here binary necklace splitting means that we may place the $k$ thieves on the vertices of a hypercube of dimension~$\lceil \log_2 k \rceil$,
and require thieves who receive adjacent pieces of the necklace to be joined by an edge in the hypercube. Our interest
in this particular constraint on the necklace partition stems from the fact that an extension of Theorem~\ref{thm:binary-splitting}
to any $q$ and $k=2^t$ would imply that a conjecture of Ramos on hyperplane mass partitions is true for configurations
of measures along the moment curve. Since all lower bounds for this problem are obtained from configurations along the 
moment curve, see Avis~\cite{avis1984}, and upper bounds can be obtained from studying solutions for these configurations~\cite{blagojevic2015},
it is of particular importance to understand this special case.

Section~\ref{sec:KKM} deals with envy-free divisions of desirables -- the usual metaphor being cake --
and undesirables, such as rent, among hungry birthday guests or frugal roommates, respectively.
While for necklace splittings what constitutes a fair division is an objective fact, the attraction of envy-free cake and rent divisions
is due to participants having subjective preferences: for a given partition of the cake, people might prefer different pieces. 
Su~\cite{su1999} showed (partially reporting on work of Simmons) that under mild conditions such envy-free divisions 
always exist: a cake can be cut into $n$ pieces such that $n$ people with subjective preferences get their most preferred piece,
and thus are not envious of anyone else.
Our goal is to establish existence results for envy-free divisions (of cake or rent) even in the absence
of full information, extending the work of Su. We show (see Corollary~\ref{cor:secret-pref} and Corollary~\ref{cor:dual-secret-pref}):
\begin{thm*}
	Envy-free cake divisions exist for any number of people even if the preferences of one person are secret.
	Dually, envy-free rent divisions exist for any number of people even if the preferences of one person are secret.
\end{thm*}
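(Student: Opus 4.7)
The plan is to derive Corollary~\ref{cor:secret-pref} as an application of the colorful/quantitative KKM theorem developed earlier in Section~\ref{sec:KKM}. Parametrize cake partitions as $x \in \Delta^{n-1}$ with coordinate $x_p$ the width of the $p$-th piece, and let agent $n$ be the one with secret preferences. For each known agent $j \in \{1,\dots,n-1\}$ and each piece $p$, set
\[
A_j^p = \{x \in \Delta^{n-1} : p \text{ is among the favorite pieces of agent } j \text{ at } x\}.
\]
Under Su's standard assumptions (continuous, hungry preferences—no agent prefers an empty piece) each $A_j^p$ is closed, the $n$ sets $(A_j^p)_p$ cover $\Delta^{n-1}$, and $A_j^p$ is disjoint from the facet $\{x_p = 0\}$. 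Thus each family $(A_j^p)_p$ is a KKM cover of $\Delta^{n-1}$.

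The next step is to reformulate ``robust to agent $n$'s secret pick.'' A partition $x$ is robust precisely when, whichever piece $i$ agent $n$ claims, the remaining $n-1$ pieces admit an envy-free assignment to the $n-1$ known agents. By Hall's marriage theorem this is equivalent to requiring the bipartite preference graph $G(x) = \{(j,p) : x \in A_j^p\}$ to satisfy the \emph{surplus-one} condition
\[
|N_{G(x)}(S)| \ge |S|+1 \qquad \text{for every non-empty } S \subseteq \{1,\dots,n-1\},
\]
since removing any single piece $i$ from such a graph preserves Hall's condition and yields the required perfect matching into $\{1,\dots,n\}\setminus\{i\}$.

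The heart of the proof is to produce a point $x^\ast$ satisfying this surplus-one condition, which is exactly the output of the colorful KKM theorem of Section~\ref{sec:KKM}: applied to the $n-1$ KKM covers $(A_j^p)_p$ with $n$ labels each on the $(n-1)$-simplex, the theorem returns a common point whose system of labels not only forms a system of distinct representatives but leaves a spare label for every agent-subset—precisely the required Hall surplus. The envy-free allocation then comes from Hall's theorem applied in $G(x^\ast)$. The rent-division dual (Corollary~\ref{cor:dual-secret-pref}) follows from the same template on the rent-simplex with ``favorite piece'' replaced by ``room agent $j$ is willing to accept at rent vector $x$,'' which converts the KKM covers into their Komiya (Alexander-dual) form, to which the dual version of the Section~\ref{sec:KKM} theorem applies and furnishes the analogous surplus condition.

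The principal obstacle is verifying that the colorful KKM theorem of Section~\ref{sec:KKM} actually certifies the surplus-one condition for \emph{all} agent subsets simultaneously rather than merely producing a single system of distinct representatives. If only the weaker SDR statement is available off the shelf, one must bootstrap: the sets $X_i = \{x \in \Delta^{n-1} : \text{the known agents can be envy-freely matched into } \{1,\dots,n\}\setminus\{i\}\}$ are each closed and each non-empty by ordinary colorful KKM, and one then shows $\bigcap_i X_i \ne \emptyset$ via a Helly-type argument exploiting the KKM cover structure, or equivalently via an approximation-and-compactness limit across a nested sequence of triangulations.
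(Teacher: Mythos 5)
Your setup and your reduction are essentially the paper's: the $n-1$ known agents give $n-1$ KKM covers of the simplex of partitions, and your surplus-one Hall condition at a single point $x^\ast$ is exactly equivalent to the conclusion of the strong colorful KKM theorem (Theorem~\ref{thm:strong-colorful-KKM}): one point $x_0$ together with, for every piece $i$, a bijection $\pi_i\colon\{1,\dots,d\}\to\{0,\dots,d\}\setminus\{i\}$ such that $x_0\in C^1_{\pi_i(1)}\cap\dots\cap C^d_{\pi_i(d)}$. The genuine gap is that you never prove this simultaneous statement, and you yourself flag it as the ``principal obstacle.'' Gale's colorful KKM theorem (Theorem~\ref{thm:colorful-KKM}) does not deliver it: it needs $d+1$ covers and produces a single system of distinct representatives at a single point, with no control over which piece is left over, hence no robustness to the secret agent's pick. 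Your fallback does not close the gap either: the sets $X_i$ are indeed closed and nonempty, but they are not convex, Helly's theorem for $n$ sets in $\R^{n-1}$ would in any case require verifying exactly the full intersection in question, and ``approximation-and-compactness across triangulations'' only produces limit points for each $i$ separately unless one already has a statement forcing one point to serve all $i$ simultaneously --- which is the whole difficulty.

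What the paper actually supplies at this juncture is a strengthening of Gale's argument: pass to open covers, set $f_{j,i}(x)$ proportional to $\dist(x,\Delta_d\setminus C^j_i)$ and $f=\frac1d\sum_j f_j$; since $f(\sigma)\subseteq\sigma$ for every face, $f$ has degree one on $\partial\Delta_d$ and therefore attains the barycenter, giving $x_0$ with $f(x_0)=\frac{1}{d+1}(1,\dots,1)$; then the $(d+1)\times(d+1)$ doubly stochastic matrix whose first column is constantly $\frac{1}{d+1}$ and whose remaining columns are $f_1(x_0),\dots,f_d(x_0)$ is, by Birkhoff's theorem, a convex combination of permutation matrices, and positivity of the first column extracts, for each $i$, a permutation with $\pi_i(0)=i$ supported on nonzero entries --- i.e., all the required bijections at the single point $x_0$. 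Your Hall reformulation is a fine equivalent packaging of that conclusion, but some argument of this kind (degree plus Birkhoff, or a substitute) is indispensable and is missing from your proposal. The same remark applies to the rent case: invoking an unspecified ``dual form'' is not enough; one needs the dual KKM condition to force $f$ to map each boundary face into the opposite face, so that $f$ is homotopic to the antipodal map on $\partial\Delta_d$ and has nonvanishing degree, after which the Birkhoff step is identical (Theorem~\ref{thm:strong-dual-colorful-KKM}).
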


We note the practical importance of this existence theorem: if a birthday cake is to be divided into pieces before it is presented
to the birthday girl, but she gets the first pick, this can always be done in such a way that the guests and host of the party are
not envious of one another. Dually, of $n$ future roommates it suffices if only $n-1$ of them are involved in deciding how to divide the rent among
the rooms such that none of them will be jealous. To prove this theorem we strengthen the colorful KKM theorem
of Gale~\cite{gale1984}; see Theorem~\ref{thm:strong-colorful-KKM}.

In Section~\ref{sec:quantitative} we consider further extensions of the KKM theorem and Sperner's lemma, which was used by
Su~\cite{su1999} to establish his fair division theorem. For a simplicial subdivision of the $d$-simplex~$\Delta_d$, we consider
a labelling of its vertices with the $d+1$ vertices of~$\Delta_d$ in such a way that no vertex subdividing a face of $\Delta_d$ disjoint
from vertex $v$ of~$\Delta_d$ receives label~$v$. Given such a labelling, Sperner's lemma~\cite{sperner1928} guarantees the existence of a facet 
of the subdivision that exhibits all $d+1$ labels. De Loera, Peterson, and Su~\cite{deLoera2002} extended Sperner's lemma to a quantitative version
for arbitrary $d$-polytopes in place of the simplex~$\Delta_d$. Here quantitative means that for a subdivision of a $d$-polytope
on $n$ vertices, one can find at least $n-d$ facets exhibiting $d+1$ distinct labels. Slightly better bounds for a larger class of objects are due to Meunier~\cite{meunier2006}.
Musin~\cite{musin2014, musin2015} established such Sperner-type results for PL manifolds with polytopal boundary. 
We further extend Musin's results to pseudomanifolds with boundary. Moreover,
our lower bound for the number of facets with pairwise distinct labels depends on the number $f_{d-1}$ of $(d-1)$-faces in the boundary instead of
the number of vertices: the bound of $\frac{f_{d-1}-2}{d-1}$ fully labelled facets is at least as good as $n-d$ for simplicial polytopes 
(and more generally pseudomanifolds) by the lower bound theorem. It is a proper improvement apart from the case of stacked polytopes.

We define the notion of a $d$-pseudomanifold with boundary to be Sperner colored with respect to a closed $(d-1)$-pseudomanifold
to extend the corresponding definition for polytopes, and then show (see Theorem~\ref{thm:sperner-pseudomfld}):

\begin{thm*}
	Let $B$ be a $(d-1)$-pseudomanifold and $K$ a $d$-pseudomanifold with boundary such that $\partial K$ is a subdivision of~$B$. 
	Let $K$ be Sperner colored with respect to $B$. Then $K$ has at least $\frac{f_{d-1}(B)-2}{d-1}$ fully labelled facets.
\end{thm*}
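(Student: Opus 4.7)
The plan is to adapt Musin's graph-theoretic argument for PL manifolds to the pseudomanifold setting, observing that the essential structural ingredient of the argument is that every interior $(d-1)$-face of $K$ lies in exactly two facets, which is exactly the pseudomanifold hypothesis on $K$.

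The first step is to fix a color $i \in \{0,1,\ldots,d\}$ and form the auxiliary graph $G_i$ whose vertices are the facets of $K$ together with one distinguished ``boundary'' vertex, and whose edges are the $(d-1)$-faces of $K$ whose vertex-color multiset equals $\{0,\ldots,d\}\setminus\{i\}$; an interior such face joins the two facets containing it, while a boundary such face joins its unique facet to the boundary vertex. A local degree count then shows that in $G_i$ a facet of $K$ has degree $1$ precisely when it is fully labeled (its unique sub-$(d-1)$-face missing color $i$ is the corresponding edge), degree $2$ precisely when its vertex-color multiset equals $\{0,\ldots,d\}\setminus\{i\}$ (one non-$i$ color repeated), and degree $0$ otherwise. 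Consequently $G_i$ decomposes into paths and cycles, with the path endpoints being exactly the fully labeled facets and the edges incident to the boundary vertex.

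Combining the data from all $d+1$ graphs $G_i$ is where the matching between fully labeled facets of $K$ and $(d-1)$-faces of $B$ gets produced. Each fully labeled facet $F^{*}$ of $K$ contributes exactly one path-endpoint in each $G_i$, so $F$ fully labeled facets contribute $(d+1)F$ path-endpoints in total; the remaining endpoints sit on the boundary and, through the Sperner coloring hypothesis and a one-dimension-lower Sperner argument applied inside each rainbow facet of $B$, correspond to specific $(d-1)$-faces of $B$. Using that the dual graph of $B$ is $d$-regular and connected by the pseudomanifold property, the plan is to arrange this matching so that each fully labeled facet of $K$ is charged at most $d-1$ distinct facets of $B$, with at most two facets of $B$ left unaccounted for, yielding $(d-1)F \geq f_{d-1}(B)-2$.

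The main obstacle is the last counting step: one has to show that different fully labeled facets of $K$ are charged disjoint collections of $(d-1)$-faces of $B$, identify exactly two facets of $B$ as the unavoidable ``exceptional'' pair, and rule out any branching of the traced paths. The non-branching follows from the pseudomanifold hypothesis on $K$ (each interior $(d-1)$-face lies in two facets, so paths through almost-labeled facets have a unique continuation), while the factor $d-1$ in the denominator and the additive $-2$ correction should come from running a spanning-walk-type argument on the dual graph of $B$, where its $d$-regularity and connectedness supply the two terminal faces and the denominator simultaneously.
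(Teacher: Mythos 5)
Your first two steps are on the right track and essentially reproduce the paper's Lemma~\ref{lem:path-following-PL} and Lemma~\ref{lem:rainbow} in graph-theoretic language: the paper packages the same path-following into a parity lemma for face-wise affine maps $f\colon K\to\R^d$ and then deduces that for each facet $\sigma$ of $B$ with colors $c_1,\dots,c_d$ there is at least one fully labelled facet of $K$ exhibiting those $d$ colors. One small but real correction: since the colors are the vertices $V(B)$ of a general $(d-1)$-pseudomanifold, not the set $\{0,\dots,d\}$, you cannot ``fix a color $i$'' and form one graph $G_i$ per color. You must instead form one graph $G_\sigma$ per facet $\sigma$ of $B$, taking as edges the $(d-1)$-faces of $K$ whose color multiset is exactly $V(\sigma)$; then the degree count goes through (degree $1$ iff the facet is rainbow with color set $V(\sigma)\cup\{c\}$ for some $c\notin V(\sigma)$, degree $2$ iff its colors are $V(\sigma)$ with one repeat, degree $0$ otherwise), and the Sperner-coloring hypothesis localizes all boundary edges to the part of $\partial K$ subdividing $\sigma$, where the ordinary Sperner lemma gives odd degree at the boundary vertex.

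The genuine gap is in the final counting step, which you correctly flag as the obstacle, but the tool you reach for — a spanning-walk argument on the $d$-regular dual graph of $B$ — is not the right one and I do not see how it could produce the bound. The paper's mechanism is a different, purely local, combinatorial fact (its Lemma~\ref{lem:deg-d}, applied to the $(d-1)$-pseudomanifold $B$): if $d$ pairwise distinct facets of $B$ use vertices from a common set of only $d+1$ vertices, then $B$ has a vertex of degree $d$. A rainbow facet of $K$ only has $d+1$ colors, so if it were charged by $d$ or more facets of $B$, those facets of $B$ would all live on a $(d+1)$-element vertex set, forcing a degree-$d$ vertex in $B$. Thus, assuming every vertex of $B$ has degree at least $d+1$, each rainbow facet is charged at most $d-1$ times, giving the clean bound $f_{d-1}(B)/(d-1)$ — and the $-2$ is then free slack, not something that comes out of the dual-graph geometry. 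The remaining case (where $B$ does have degree-$d$ vertices, i.e. stacking vertices, or $B=\partial\Delta_d$) is handled entirely separately in the paper by an unstacking argument showing each stacking move raises the rainbow count by at least one; this part of the theorem is absent from your proposal and is where the $-2$ actually enters. In short: replace the dual-graph spanning-walk idea with the local ``few facets on few vertices'' lemma, and add the stacking induction for the low-degree case.
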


While Musin's proof of Sperner-type results for certain classes of PL manifolds depends on the polytopal result of De Loera, Peterson, and
Su, our proof method does not. In particular, we give a new and independent proof of their result for simplicial polytopes, improve lower bounds,
and extend results to a larger class of objects. To prove our pseudomanifold version of Sperner's lemma, we extend methods of Ramos~\cite{ramos1996}
that he developed to prove hyperplane mass partition results. Let $f\colon M^{(d+1)} \longrightarrow N^{(d)}$ be a submersion of smooth manifolds,
then the level sets of $f$ are embedded $1$-submanifolds. Ramos uses the PL version of this preimage theorem to establish the nonexistence
of certain equivariant maps via path-following arguments. Since path-following is a standard way of proving Sperner-type results, we adapt
and extend Ramos' methods to be applicable in this situation. Moreover, we establish colorful extensions of our quantitative Sperner's lemma
for pseudomanifolds, see Theorem~\ref{thm:colored-sperner-pseudomfld}, and use it to prove a colorful KKM theorem for pseudomanifolds,
see Theorem~\ref{thm:colored-KKM-pseudomfld}.

% --------------------------------------------------------------------------------------------------------------%

\section*{Acknowledgements}

This research was performed during the \emph{Summer Program for Undergraduate Research} 2016 at Cornell University. 
The authors are grateful for the excellent research conditions provided by the program. 
The authors thank Henry Adams, Thomas B\aa\aa th, Jesus De Loera, Fr\'ed\'eric Meunier, Oleg Musin, and Edward Swartz for helpful discussions. 
MA is supported by a Clare Booth Luce scholarship. 
MP is supported by the Northeastern University Mathematics Department and the Northeastern University Scholars Program. 
DS is supported by a Watson--Brown Foundation scholarship. 
LHT is supported by the Mathematics Department, Science Faculty and Summer Undergraduate Research Programme 2016 of the Chinese University of Hong Kong. 

% --------------------------------------------------------------------------------------------------------------%

\section{Splitting necklaces with additional constraints}
\label{sec:necklaces}

An unclasped necklace with $q$ types of beads arranged in some arbitrary order is to be divided among
$k$ thieves in such a way that for each kind of bead each thief receives the same number of beads
of that kind, provided the number of beads of each kind is divisible by~$k$. This task is to be 
accomplished with as few cuts of the necklace as possible. Such a partition of the necklace among 
$k$ thieves is referred to as a \emph{$k$-splitting}. The number of cuts is the \emph{size} of a $k$-splitting.

For $k=2$ thieves Goldberg and West~\cite{goldberg1985} found that $q$ cuts always suffice and this number is optimal.
Alon and West~\cite{alon1986} conjectured that every open necklace with $q$ types of beads has a $k$-splitting of
size $(k-1)q$ and this was proven by Alon:

\begin{thm}[Alon~\cite{alon1987}]
\label{thm:necklaces}
	Every unclapsed necklace with $q$ types of beads and $ka_i$ beads of each type with
	$1 \le i \le q$ has a $k$-splitting of size at most~$(k-1)q$. 
\end{thm}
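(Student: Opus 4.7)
The plan is to translate the discrete necklace into a continuous measure-partition problem on the interval $[0,1]$ and then invoke an equivariant Borsuk--Ulam type argument. Replace each bead type by a piecewise constant probability measure $\mu_i$ on $[0,1]$ supported on the bead positions, so a $k$-splitting of size at most $(k-1)q$ corresponds to a partition of $[0,1]$ into $N := (k-1)q + 1$ consecutive subintervals, labelled by the $k$ thieves, such that every thief receives exactly $\mu_i$-mass $1/k$ for every $i$. Once such a continuous fair partition is produced, the discrete statement follows by a standard rounding argument that uses the divisibility hypothesis that each bead type occurs a multiple of $k$ times. A further reduction handles the case $k = p$ prime: if $k = p k'$, first fairly $p$-split the necklace with $(p-1)q$ cuts, then fairly $k'$-split each of the $p$ resulting sub-necklaces with $(k'-1)q$ cuts each, for a total of $(p-1)q + p(k'-1)q = (k-1)q$ cuts; divisibility is preserved through the intermediate step.

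For $k = p$ prime, I would parameterize candidate splittings by the $N$-fold join
\[
X := [p]^{*N}, \qquad [p] := \{1,\ldots,p\},
\]
a point of which is a formal sum $t_1 v_1 + \cdots + t_N v_N$ with $t_j \ge 0$, $\sum t_j = 1$, and $v_j \in [p]$. Such a point encodes the lengths $t_j$ of the $N$ ordered subintervals together with a thief label $v_j$ per subinterval. Define the test map
\[
f : X \longrightarrow (W_p)^q, \qquad W_p := \{x \in \mathbb{R}^p : x_1 + \cdots + x_p = 0\},
\]
whose $(i,s)$-coordinate records the $\mu_i$-mass received by thief $s$ minus $1/p$. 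A zero of $f$ is precisely a fair $p$-splitting, since the column-sum constraint already fixes the total mass.

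Equip $X$ with the $\mathbb{Z}/p$-action that cyclically permutes thief labels and $(W_p)^q$ with the $q$-fold standard representation; then $f$ is $\mathbb{Z}/p$-equivariant. The crux is that no $\mathbb{Z}/p$-equivariant map $X \to (W_p)^q \setminus \{0\}$ can exist, forcing $f$ to vanish somewhere. This follows from the standard equivariant obstruction computation: the $\mathbb{Z}/p$-index of $[p]^{*N}$ (the $\mathbb{Z}/p$-action coming from the free action on each factor $[p]$) equals $N - 1 = (p-1)q$, whereas the sphere in $(W_p)^q$ has index $(p-1)q - 1$, so no equivariant map to the punctured representation can exist by Dold's theorem (equivalently, a B\'ar\'any--Shlosman--Sz\H ucs-style configuration-space argument).

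The main obstacle is precisely this equivariant topology step: correctly identifying the $\mathbb{Z}/p$-index of $[p]^{*N}$ and verifying that the $\mathbb{Z}/p$-action on $X$ is sufficiently well-behaved (essentially free on the top stratum, where all $t_j > 0$ and the labels are not $\mathbb{Z}/p$-balanced) to apply the index comparison. Everything else -- the continuous-to-discrete passage, the reduction to primes, and the verification that the test map captures the fairness condition -- is routine bookkeeping.
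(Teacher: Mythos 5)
The paper does not prove Theorem~\ref{thm:necklaces}; it is stated as a known result with a citation to Alon and the remark that it ``follows from more general topological machinery.'' So there is no in-paper proof to compare against, and your proposal should be judged against the standard literature.

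Your outline is correct and is essentially the modern topological proof of Alon's theorem as presented, e.g., in Matou\v sek's book. The reduction to $k=p$ prime is the usual one (and your cut count $(p-1)q + p(k'-1)q = (pk'-1)q$ is right, provided you note that after the $p$-split each thief's share is a union of intervals that you concatenate into a new ``necklace'' before applying the $k'$-split, so that cuts landing at concatenation points cost nothing). The configuration space $X = [p]^{*N}$ with $N = (p-1)q+1$, the test map into $(W_p)^q$, the $\mathbb{Z}/p$-equivariance, and the index/connectivity comparison ($X$ is $(N-2)$-connected and free, while $S((W_p)^q)$ is free of dimension $(p-1)q-1 = N-2$, so Dold's theorem forbids an equivariant map into the punctured target) are all correct. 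One small imprecision: you describe the $\mathbb{Z}/p$-action on $X$ as ``essentially free on the top stratum,'' but in fact the cyclic shift of thief labels acts freely on all of $[p]^{*N}$, since any point of the join has at least one coordinate with $t_j > 0$ and the action on $[p]$ is free; no stratification is needed. The only genuinely glossed step is the ``standard rounding argument'' converting continuous cuts (which may pass through beads) into bead-respecting cuts; this is exactly the content of the unnamed lemma the paper proves just before Theorem~\ref{thm:cyclic-4-splitting}, via a cycle-chasing argument on a multigraph of thieves, and you should not treat it as entirely routine --- it is where the divisibility hypothesis is actually used and where a cycle must be exhibited.
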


The $k=2$ case is a consequence of the Borsuk--Ulam theorem, and Theorem~\ref{thm:necklaces}
follows from more general topological machinery. Matou\v sek~\cite{matousek2008} observed that the $k=2$ case
can also be deduced from the ham sandwich theorem, one of the most elementary hyperplane equipartition
result: given $d$ probability measures $\mu_1, \dots, \mu_d$ on~$\R^d$ such that each affine hyperplane 
has measure zero, there exists an affine hyperplane $H$ with $\mu_i(H^+) = \mu_i(H^-)$ for all~$i$.
Here $H^+$ and $H^-$ denote the positive and negative halfspace of~$H$, respectively.
The essential idea is to place the necklace along the moment curve in~$\R^q$. The ham
sandwich theorem now guarantees the existence of a bisecting hyperplane. Any hyperplane in~$\R^q$
cuts the moment curve in at most $q$ points and cuts that happen to go through beads may be adjusted
in pairs to not pass through beads. Here we are interested in whether hyperplane equipartition results 
for more than one hyperplane may be applied to the necklace splitting problem in a similar way.

We give a brief overview of the sparse landscape of known results about hyperplane equipartitions and refer
to the survey~\cite{blagojevic2015}. A \emph{mass} is a finite Borel measure on~$\R^d$ such that each hyperplane has 
measure zero. A set of $k$ affine hyperplanes is an \emph{equipartition} of a mass if each of the 
$2^k$ orthants has the same measure. The least dimension~$d$ where $j$ masses on~$\R^d$ admit
a common equipartition by $k$ affine hyperplanes is denoted by~$\Delta(j,k)$. The ham sandwich 
theorem yields~${\Delta(j,1) \le j}$. Avis, for $k =1$, and then Ramos, for general~$k$, obtained the 
lower bound $\Delta(j, k) \ge \lceil j\frac{2^k-1}{k}\rceil$. Ramos conjectured this bound to be tight for all
$j$ and~$k$. No counterexamples to this conjecture are known. Upper bounds are due to Mani--Levitska,
Vre\'cica, and \v Zivaljevi\'c~\cite{mani2006}; Ramos~\cite{ramos1996}; Hadwiger~\cite{hadwiger1966}; 
and Blagojevi\'c, Haase, Ziegler, and the second author~\cite{blagojevic2015, blagojevic2016}. In fact, all known tight cases for Ramos' conjecture follow from the 
setup in~\cite{blagojevic2016}.

\begin{thm}[\cite{hadwiger1966, ramos1996, mani2006, blagojevic2016}]
	Ramos' conjecture $\Delta(j, k)=\lceil j\frac{2^k-1}{k}\rceil$ holds if $k = 1$, if $k = 2$ and
	$j \in \{2^t-1, 2^t, 2^t+1\}$, or if $k = 3$ and $j \in \{2,4\}$.
\end{thm}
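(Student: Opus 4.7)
The plan is, for each listed pair $(j,k)$, to verify separately the lower bound $\Delta(j,k) \ge \lceil j(2^k-1)/k\rceil$ and the matching upper bound. The lower bound is essentially uniform: place the $j$ input measures as small clusters of mass supported on $j$ disjoint short arcs of the moment curve $\gamma(t) = (t, t^2, \ldots, t^d) \in \R^d$. Any affine hyperplane meets $\gamma$ transversely in at most $d$ points, so $k$ hyperplanes contribute at most $kd$ cut points on $\gamma$ in total. On the other hand, in order for each of the $j$ localized masses to be equipartitioned into $2^k$ equal orthant-pieces, its supporting arc must be cut into at least $2^k$ sub-arcs, i.e.\ requires at least $2^k-1$ cut points. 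Summing gives $j(2^k-1) \le kd$, that is $d \ge \lceil j(2^k-1)/k\rceil$, recovering the Avis bound for $k=1$ and the Ramos bound in general.

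The upper bound is then proved case by case by invoking the cited literature. For $k=1$ the bound $\Delta(j,1) \le j$ is precisely the ham sandwich theorem. For $k=2$ and $j = 2^t$, the bound is the Mani-Levitska--Vre\'cica--\v Zivaljevi\'c theorem, whose proof runs a configuration space / test map scheme: the parameter space of ordered pairs of oriented affine hyperplanes in $\R^d$ carries a natural action of the Weyl group $(\mathbb{Z}/2)^2 \rtimes S_2$ of type $B_2$, one builds a test map into a $G$-representation whose zero set is exactly the locus of equipartitions, and a Fadell--Husseini index computation rules out an equivariant map to the unit sphere of a suitable complement, forcing a zero of the test map. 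The neighboring cases $j = 2^t \pm 1$ are then reduced to $j = 2^t$ via the constraint method (adding or symmetrizing an auxiliary mass) of Blagojevi\'c, Frick, Haase, and Ziegler. For $k=3, j=2$, the bound $\Delta(2,3) \le 5$ is Hadwiger's 1966 result, while $\Delta(4,3) \le 10$ is the recent theorem of Blagojevi\'c, Frick, Haase, and Ziegler, which combines the constraint method with an involved equivariant cohomology calculation for the Weyl group of type $B_3$.

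The main obstacle in each non-trivial case is establishing the non-vanishing of the relevant equivariant cohomological obstruction, that is, producing a nonzero product of equivariant Stiefel--Whitney classes in the Fadell--Husseini index of the product of spheres parametrizing the hyperplane configurations. For $k=1$ and for $k=2$ with very small $j$ this essentially collapses to a Borsuk--Ulam argument, but for the remaining listed cases one genuinely needs delicate restriction-to-subgroup and Serre spectral sequence computations, and it is precisely the difficulty of those calculations that explains why Ramos' conjecture is still open for all other triples $(j,k)$.
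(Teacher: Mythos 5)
The paper itself does not prove this theorem: it is a survey statement that packages results from the four cited references (Hadwiger; Ramos; Mani-Levitska, Vre\'cica, \v Zivaljevi\'c; Blagojevi\'c, Frick, Haase, Ziegler), and the surrounding text explicitly notes that all known tight cases can be re-derived from the setup of~\cite{blagojevic2016}. So there is no in-paper argument to compare against, and your proposal is best judged as a summary of the cited literature. Your lower-bound sketch is correct and is indeed the Avis--Ramos argument: place $j$ separated mass clusters on the moment curve, observe that each cluster must carry positive measure in every orthant and hence be traversed by at least $2^k - 1$ crossings, while $k$ affine hyperplanes account for at most $kd$ intersections with a degree-$d$ moment curve, giving $kd \ge j(2^k-1)$.

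Your upper-bound attributions, however, contain a clear error. You attribute $\Delta(2,3)\le 5$ to Hadwiger's 1966 paper. Hadwiger's \emph{Simultane Vierteilung zweier K\"orper} concerns simultaneously \emph{quartering} two bodies, that is, dividing two masses in $\R^3$ into four equal pieces by two planes; it establishes $\Delta(2,2) = 3$, not $\Delta(2,3) = 5$. The $k = 3$ cases $j\in\{2,4\}$ in the statement are due to Ramos and to Blagojevi\'c--Frick--Haase--Ziegler, not Hadwiger. More mildly, assigning the $k=2$, $j = 2^t$ case solely to Mani-Levitska--Vre\'cica--\v Zivaljevi\'c undersells the role of~\cite{blagojevic2016}: the paper's remark that all known tight cases follow from that one setup is there precisely because the earlier literature on these upper bounds is uneven and the relative equivariant obstruction theory framework of~\cite{blagojevic2016} is what establishes the whole list cleanly. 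Apart from these attribution issues, the proof outline you give (configuration space / test map scheme, Fadell--Husseini index, and the constraint method to move between neighboring values of $j$) is the right high-level picture of how the cited upper bounds are obtained.
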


Viewing necklace splittings under a mass equipartition framework places additional constraints on the way 
the necklace pieces are distributed. Placing the necklace on the moment curve as before, each thief occupies
an orthant of a hyperplane equipartition. However, orthants that are not contained in the same halfspace of
some hyperplane are not adjacent. For $k = 4$ thieves this leads to a \emph{cyclic} distribution of the pieces
of the necklace, that is, there are two disjoint pairs of thieves, say $1$ and $3$ as well as $2$ and~$4$, that 
may not receive adjacent necklace pieces. Here we consider two pieces to be nonadjacent even if only an 
empty piece is in between them. Empty pieces are the result of cutting twice at the same point.

\medskip
\begin{center}
\begin{tikzpicture}[xscale=1.5]
\draw (0,-1) -- (7,-0.3);
\draw (0.1, 1.5) -- (6, -1.5);
\draw [cyan,domain=0:2*pi] plot (\x, {sin(\x r)});
\node at (4, 0.5) {$++$};
\node at (6.5, -1) {$+-$};
\node at (3, -1.5) {$--$};
\node at (2, 0) {$-+$};
\end{tikzpicture}
\end{center}
\medskip

There is a particular complication that comes with using hyperplane equipartitions to prove necklace 
splitting results: we want to think of the beads as point masses, but the equipartition results only
hold for those masses that assign measure zero to each hyperplane. Thus, we rather think 
of beads as small $\varepsilon$-balls of mass (or intervals of length~$\varepsilon$ on the moment
curve). Then hyperplanes may pass through beads. Not 
wanting to cut beads into pieces, we first show that these cuts can be adjusted to not pass through 
beads. The case $k = 2$ was treated by Goldberg and West~\cite[Sec.~2]{goldberg1985}. 
It was pointed out to us by Fr\'ed\'eric Meunier that the case for general~$k$ was also treated by Alon, Moshkovitz, and 
Safra~\cite[Proof of Lemma~7]{alon2006} using the theory of network flows.

\begin{lem}
	Let $k \ge 2$ and $q \ge 1$ be integers and consider a necklace with $q$ types of beads and $ka_i$ beads of each type with
	$1 \le i \le q$ and $\sum_{i=1}^q ka_i=N$. 
 Let $f_1, \dots, f_q\colon [0,1] \longrightarrow \R$ be functions such that if the $m$-th 
	bead is of type $\ell \in \{1, \dots, q\}$, then $f_\ell(x) = 1$ for all $x \in [\frac{m-1}{N}, \frac{m}{N}]$ 
	and $f_\ell(x) = 0$ otherwise. Let the measure $\mu_i$ on $[0,1]$ be obtained from integrating~$f_i$.
	If there exist $(k-1)q$ cutpoints of $[0,1]$ and a partition of the resulting subintervals into $k$ parts
	such that each part has the same measure for each~$\mu_i$, then the necklace admits a $k$-splitting
	of size~$(k-1)q$ as well. Moreover, the distribution order among the $k$ parts is the same.
\end{lem}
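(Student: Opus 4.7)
The plan is to round the hypothesized cutpoints to bead boundaries by reformulating the problem as an integer feasibility LP whose constraint matrix is totally unimodular; integrality of a fractional feasible point will then yield a bead-boundary solution.

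First I would order the hypothesized cutpoints as $0 \le c_1 \le \cdots \le c_{(k-1)q} \le 1$, producing subintervals $I_\ell = [c_\ell, c_{\ell+1}]$ (with $c_0 = 0$, $c_{(k-1)q+1} = 1$) labeled $P(\ell) \in \{1,\dots,k\}$ by the given partition. Call a bead $b = [(m-1)/N, m/N]$ \emph{cut} if some $c_\ell$ lies in its interior and \emph{free} otherwise. Each free bead sits inside a single subinterval, contributing its mass to the part $P(\ell)$; a cut bead $b$ meets a consecutive block $I_{L(b)}, \dots, I_{R(b)}$ and must be reassigned as a whole to one of these. Letting $B_i$ denote the set of cut beads of type $i$ and $n_{ij}$ the number of free beads of type $i$ lying in subintervals labeled $j$, one seeks indices $\sigma(b) \in \{L(b), \dots, R(b)\}$ satisfying
\begin{equation*}
|\{b \in B_i : P(\sigma(b)) = j\}| = a_i - n_{ij} \qquad \text{for every pair } (i,j).
\end{equation*}

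Next I would encode this as a linear program with variables $x_{b,\ell} \in [0,1]$ (one per cut bead $b$ and $L(b) \le \ell \le R(b)$), constraint $\sum_\ell x_{b,\ell} = 1$ for each cut bead, and mass-balance equations $\sum_{b \in B_i} \sum_{\ell : P(\ell)=j} x_{b,\ell} = a_i - n_{ij}$. The hypothesized cuts provide the fractional feasible point $x_{b,\ell} = N \cdot |b \cap I_\ell|$. The constraint matrix has exactly two $+1$ entries per column: one in the bead-row for $b$ and one in the row indexed by $(\mathrm{type}(b), P(\ell))$; since these two families of rows partition the row set, Ghouila-Houri's criterion yields total unimodularity. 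Because the right-hand side is integral, the LP has a $\{0,1\}$-valued vertex, giving the desired assignment $\sigma$.

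Finally I would translate $\sigma$ back into cutpoints: each $c_\ell$ interior to some cut bead $b = [(m-1)/N, m/N]$ is moved to $(m-1)/N$ when $\ell \le \sigma(b)$ and to $m/N$ otherwise, while cuts already at bead boundaries are left in place. Any two cut beads $b_1 < b_2$ satisfy $R(b_1) \le L(b_2)$, since beads are disjoint and all cuts in $b_1$ precede those in $b_2$; hence $\sigma(b_1) \le \sigma(b_2)$, and the new cutpoints $c'_1 \le \cdots \le c'_{(k-1)q}$ are automatically monotone. A short case analysis then confirms that every free bead remains in its original subinterval and that each cut bead $b$ lies entirely in the new subinterval of index $\sigma(b)$, so the mass-balance equations give exactly a $k$-splitting of size $(k-1)q$ whose distribution order is the original $P$. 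I expect the main technical hurdle to be the TU verification together with the monotonicity bookkeeping; once the LP is set up carefully, the rest follows by standard integrality.
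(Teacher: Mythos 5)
Your proposal is correct, but it takes a genuinely different route from the paper. The paper's proof is an explicit local-surgery algorithm: for a fixed color whose beads are cut, build a graph on the $k$ thieves with one edge per cut (joining the two thieves bordering that cut), observe that no vertex can have degree one because each thief must receive an integer number of beads of that color, find a cycle, and push the cuts around the cycle until a cut reaches a bead boundary or two cuts merge; a potential function shows this terminates. Your proof instead encodes the rounding problem as a feasibility LP whose constraint matrix is the edge-vertex incidence matrix of a bipartite multigraph (cut beads on one side, type-part pairs on the other) and invokes total unimodularity to extract a $\{0,1\}$ vertex from the known fractional solution. This is essentially the transportation-problem viewpoint, and it is close in spirit to the network-flow proof of Alon, Moshkovitz, and Safra that the paper cites in passing; the paper's cycle-cancellation is, in effect, a self-contained proof of the integrality fact you invoke. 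What your approach buys is a clean separation between modeling and integrality (and it is shorter once TU is granted); what the paper's buys is elementarity, requiring nothing beyond a termination argument. The one place your write-up leans on care rather than machinery is the back-translation: the monotonicity of the new cutpoints hinges on the observation that distinct cut beads $b_1 < b_2$ satisfy $R(b_1) \le L(b_2)$, hence $\sigma(b_1) \le \sigma(b_2)$, and you correctly flag and use this. (Also note that verifying TU is not really a hurdle: a $0/1$ matrix with exactly two ones per column, one in each of two row classes, is the incidence matrix of a bipartite multigraph and is TU by the standard theorem; Ghouila-Houri is overkill.) Both proofs preserve the distribution order, as required by the last sentence of the lemma.
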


\begin{proof}
We think of the measures $\mu_i$ as beads. The prerequisites then guarantee a $k$-splitting of the necklace
where cuts are allowed to pass through beads and each thief gets the same mass of each type of bead, but
the beads are not necessarily intact. We present an algorithm to adjust those cuts that pass through beads:
	\begin{compactenum}
		\item Find some cut which passes through a bead of color~$C$. 
		\item Consider the graph~$G$ constructed with the vertices corresponding to the thieves, that is, the
			parts of the splitting. For every cut through a color~$C$ bead, draw an edge between the two 
			thieves who receive the parts of nonzero length situated immediately on either side of that cut. 
			It is possible that some pairs of vertices might have multiple edges between them.
		\item Find a cycle in~$G$, and direct the edges along that cycle. For each directed edge~$A\to B$ 
			move the corresponding cut point so that~$B$'s part gets larger. Do this simultaneously for all 
			edges in the cycle, so that the total amount of bead~$C$ designated to each robber remains 
			constant. Continue this movement until either two cuts coincide, or one of the moving cuts 
			reaches a point in between two beads.
		\item If no cut passes through any bead, stop. Otherwise, return to step 1.
	\end{compactenum}
Each time step~$3$ runs, the quantity (number of pieces with nonzero length)$+$(number of cuts not located in between beads) 
decreases by one. It follows that this algorithm terminates. Once the algorithm terminates, the output is exactly a 
splitting with no extra cuts, and which has the same distribution order as the original splitting. Note that the existence of a 
cycle in the graph $G$ follows from the fact that no vertex in $G$ can have degree one; otherwise, some thief would have 
a noninteger amount of the color $C$ bead designated to them.
\end{proof}

\begin{thm}
\label{thm:cyclic-4-splitting}
	For $k=4$ thieves and a necklace with $q$ types of beads, there exists a cyclic $k$-splitting of size at most~$(k-1)q = 3q$.  
\end{thm}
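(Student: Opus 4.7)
My plan is to imitate Matou\v{s}ek's proof of the $k=2$ case: I would place the necklace along the moment curve $\gamma(t) = (t,t^2,\dots,t^d) \subset \R^d$ and replace each bead by a short interval of mass, yielding $q$ absolutely continuous measures $\mu_1,\dots,\mu_q$ on $\R^d$. By the preceding lemma, it suffices to produce a continuous splitting of these measures and then adjust the cuts to lie between beads.

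I would take $d = \lceil 3q/2 \rceil$ and apply a hyperplane mass partition result (i.e., $\Delta(q,2) = \lceil 3q/2 \rceil$, in the ranges where this is known) to obtain two affine hyperplanes $H_1, H_2 \subset \R^d$ such that each of the four orthants $H_1^\pm \cap H_2^\pm$ carries exactly a quarter of every $\mu_i$. Labeling the thieves $++,+-,--,-+$ and assigning each to the correspondingly signed orthant, the diagonal pairs $\{++,--\}$ and $\{+-,-+\}$ are precisely those differing in both signs. As $t$ increases through $[0,1]$, the necklace generically crosses one hyperplane at a time, so consecutive pieces lie in orthants differing in a single sign and are assigned to cyclically adjacent thieves; this is exactly the required cyclic adjacency.

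Each $H_j$ meets the moment curve in the zero set of a polynomial of degree at most~$d$ in~$t$ and hence in at most $d$ points, so the total cut count is at most $2d = 2\lceil 3q/2 \rceil$. For $q$ even this equals $3q$ and we are done. For $q$ odd it is $3q+1$, one too many, so a cut must be saved; the plan is to exploit the expected one-dimensional freedom in the family of equipartitions ($2d - 3q = 1$) to force the top coefficient $a_d$ of one hyperplane, say $H_2$, to vanish. Then $H_2$ lies in a translate of $\{x_d = 0\}$ and meets the moment curve in the roots of a polynomial of degree only $d-1$, bringing the total to $2d-1 = 3q$.

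The hard part will be this odd-$q$ case: the dimension count alone does not produce an equipartition on the degenerate locus $\{a_d = 0\}$, so making it rigorous will require refining the equivariant obstruction argument behind Ramos' theorem to show that a zero of the relevant test map persists after intersecting with this codimension-one condition. Once the hyperplanes are produced, the preceding lemma perturbs any cut that happens to pass through a bead without altering the distribution order or the cut count, giving a genuine cyclic $4$-splitting of the necklace of size at most $3q$.
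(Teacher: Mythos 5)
Your setup---place the necklace on the moment curve, equipartition the $q$ resulting measures by two hyperplanes, and read off a cyclic $4$-splitting from the four orthants---is exactly the paper's starting point. But your plan for handling general $q$ has two serious gaps.

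First, you write that for $q$ even, $\Delta(q,2) = 3q/2$ gives $2d = 3q$ and ``we are done.'' This is unjustified: Ramos' conjecture $\Delta(j,2) = \lceil 3j/2 \rceil$ is only \emph{known} for $j \in \{2^t-1,\,2^t,\,2^t+1\}$. Already for $q=6$ (or $q=10, 12, 14, \dots$) the required equipartition result is open, so your argument does not cover all even $q$. Second, for odd $q$ you propose forcing the leading coefficient of one hyperplane to vanish and acknowledge yourself that this ``will require refining the equivariant obstruction argument behind Ramos' theorem''; that refinement is precisely the missing content, and it is not obvious it can be carried out---the obstruction-theoretic computations behind the known cases of Ramos' conjecture do not straightforwardly descend to the degenerate locus $\{a_d = 0\}$.

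The paper sidesteps both problems with a purely combinatorial reduction that you should notice. It proves the statement only for $q = 2^t$, using $d = 3\cdot 2^{t-1}$ (a case where Ramos' conjecture \emph{is} known), and then for arbitrary $q = 2^t - r$ extends the necklace by appending, for each of $r$ new bead types, a block of exactly four beads of that type. Dividing four beads of one type among four thieves forces three cuts inside that block, so of the $3\cdot 2^t$ cuts produced, $3r$ are forced onto the appended types, leaving at most $3\cdot 2^t - 3r = 3q$ cuts for the original necklace, and the induced splitting is still cyclic. This avoids any appeal to unknown cases of $\Delta(q,2)$ and requires no new topology. If you want to salvage your proposal, replace the even/odd case split by this padding-to-a-power-of-two argument.
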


\begin{proof}
We first prove the statement for $q$ a power of two, say~${q = 2^t}$. Place the necklace along the moment curve
$\gamma(x) = (x,x^2, \dots, x^d)$ in~$\R^d$ for $d = 3\cdot 2^{t-1}$. More precisely, let the $n$-th bead be at the 
point~$\gamma(n)$. Each kind of bead now defines a discrete mass on~$\R^d$ and thus there exists an equipartition
of these $q$ masses by two affine hyperplanes in dimension $d = 3\cdot 2^{t-1}$. Now associate the four orthants 
to thieves $1, \dots, 4$ in such a way that $1$ and $3$ as well as $2$ and $4$ receive opposite orthants, that is,
the orientations of both hyperplanes have to be flipped to move from one to the other orthant. This results in a cyclic
splitting of size~$3q$ since any hyperplane intersects the moment curve in at most $d = 3\cdot 2^{t-1}$ points.

Now let $q$ be arbitrary, say $q = 2^t-r$ for some integer~${r \ge 0}$. From the necklace with $q$ different kinds of beads
construct an extended necklace with $2^t$ kinds of beads by appending four beads each of $r$ new kinds. 
We know that a cyclic $4$-splitting of size $3\cdot 2^t$ exists for this extended necklace. Of those $3\cdot 2^t$ 
cuts $3r$ cuts have to split the appended $r$ kinds of beads. This leaves $3\cdot 2^t - 3r = 3q$ beads for the 
original part of the necklace. This induced splitting is cyclic as well.
\end{proof}

\begin{ex}
	In our notion of cyclic splitting a change of direction is allowed. Here we note that without this flexibility splittings with the optimal number of cuts
	that assign the pieces to thieves in order $1,2, \dots, k, 1, 2, \dots$ do not exist for any~$k \ge 3$.
\begin{figure}[h!]
\centering
\begin{tikzpicture}[xscale=1.5, yscale=1.5]
\draw (0,1) -- (3,1);
\draw [fill=green] (0.5,1) circle [radius=0.1];
\draw [fill=green] (0.75,1) circle [radius=0.1];
\draw [fill=green] (1,1) circle [radius=0.1];
\draw [fill=red] (1.25,1) circle [radius=0.1];
\draw [fill=red] (1.5,1) circle [radius=0.1];
\draw [fill=red] (1.75,1) circle [radius=0.1];
\draw [fill=green] (2,1) circle [radius=0.1];
\draw [fill=green] (2.25,1) circle [radius=0.1];
\draw [fill=green] (2.5,1) circle [radius=0.1];
\end{tikzpicture}
\caption{This necklace cannot be cut into five pieces and fairly distributed among three thieves in the order $1,2,3,1,2$.}
\end{figure}
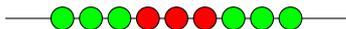

For $k =3$ thieves and $q = 2$ kinds of beads, the optimal number of cuts is $(k-1)q = 4$. In the necklace above two of those
cuts must divide the red block into three pieces and the other two cuts must occur in each of the green blocks. But with the
distribution order $1,2,3,1,2$, thief~$3$ does not receive any green beads. 

This reasoning generalizes to any odd number of thieves. For an even number of thieves, say $k = 4$, the following configuration
is a counterexample:
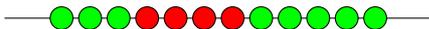
\begin{figure}[h!]
\centering
\begin{tikzpicture}[xscale=1.5, yscale=1.5]
\draw (0,1) -- (3.75,1);
\draw [fill=green] (0.5,1) circle [radius=0.1];
\draw [fill=green] (0.75,1) circle [radius=0.1];
\draw [fill=green] (1,1) circle [radius=0.1];
\draw [fill=red] (1.25,1) circle [radius=0.1];
\draw [fill=red] (1.5,1) circle [radius=0.1];
\draw [fill=red] (1.75,1) circle [radius=0.1];
\draw [fill=red] (2,1) circle [radius=0.1];
\draw [fill=green] (2.25,1) circle [radius=0.1];
\draw [fill=green] (2.5,1) circle [radius=0.1];
\draw [fill=green] (2.75,1) circle [radius=0.1];
\draw [fill=green] (3,1) circle [radius=0.1];
\draw [fill=green] (3.25,1) circle [radius=0.1];
\end{tikzpicture}
\caption{This necklace cannot be cut into seven pieces and fairly distributed among four thieves in the order $1,2,3,4,1,2,3$.}
\end{figure}

Here three cuts have to occur in the red block, one cut in the first green block, and the last two cuts in the second green block.
However, with the distribution order $1,2,3,4,1,2,3$, thief~$4$ receives no green beads.
\end{ex}

When the number of thieves $k$ is equal to $2^t$ for positive integer~$t$, the notion of a cyclic 
necklace splitting has a natural generalization. We may place each thief on the vertices of a 
$t$-dimensional hypercube, and stipulate that adjacent parts of the necklace partition are assigned 
to thieves along some edge of the hypercube. Such a splitting will be called a 
\emph{binary necklace splitting}; below is a more general definition for arbitrary~$k$. Again, cutting multiple times in the same spot and thus creating
empty pieces is explicitly allowed. 
If, for some $t\ge 3$, Ramos' conjecture $\Delta(j, t)=\lceil j\frac{2^t-1}{t}\rceil$
holds for arbitrarily large pairs $(j, t)$ with $t$ dividing~$j$, then the proof of Theorem~\ref{thm:cyclic-4-splitting} 
may be adapted to show that binary necklace splittings are always possible for $2^t$ thieves.

\begin{defn}
For some $k$ with $2^{t-1}<k\le 2^t$, we say that a given necklace partition 
is a \emph{binary necklace splitting} if there exists some assignment of the robbers to $k$ 
distinct length $t$ binary strings such that, for any pair of robbers that receive adjacent parts 
of the necklace, the corresponding binary strings differ by a single bit.
\end{defn}

For $k$ not a power of two, this generalizes the notion of a binary necklace splitting 
by stipulating that the thieves occupy some subset of the vertices of the smallest possible 
hypercube and if two thieves receive adjacent pieces they must share an edge in the hypercube.
The first relevant case 
here is $k=3$, with the condition implying that some robber gets every other piece of the necklace. 
See the figure below for an example:
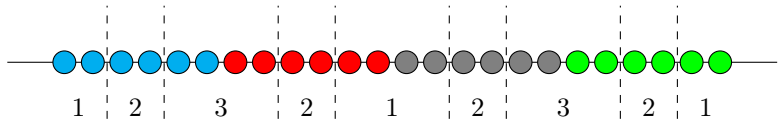
\begin{figure}[h!]
\centering
\begin{tikzpicture}[xscale=1.5, yscale=1.5]
\draw (0,1) -- (6.75,1);
\draw [fill=cyan] (0.5,1) circle [radius=0.1];
\draw [fill=cyan] (0.75,1) circle [radius=0.1];
\draw [fill=cyan] (1,1) circle [radius=0.1];
\draw [fill=cyan] (1.25,1) circle [radius=0.1];
\draw [fill=cyan] (1.5,1) circle [radius=0.1];
\draw [fill=cyan] (1.75,1) circle [radius=0.1];
\draw [fill=red] (3,1) circle [radius=0.1];
\draw [fill=red] (3.25,1) circle [radius=0.1];
\draw [fill=red] (2,1) circle [radius=0.1];
\draw [fill=red] (2.25,1) circle [radius=0.1];
\draw [fill=red] (2.5,1) circle [radius=0.1];
\draw [fill=red] (2.75,1) circle [radius=0.1];
\draw [fill=gray] (3.5,1) circle [radius=0.1];
\draw [fill=gray] (3.75,1) circle [radius=0.1];
\draw [fill=gray] (4,1) circle [radius=0.1];
\draw [fill=gray] (4.25,1) circle [radius=0.1];
\draw [fill=gray] (4.5,1) circle [radius=0.1];
\draw [fill=gray] (4.75,1) circle [radius=0.1];
\draw [fill=green] (5,1) circle [radius=0.1];
\draw [fill=green] (5.25,1) circle [radius=0.1];
\draw [fill=green] (5.5,1) circle [radius=0.1];
\draw [fill=green] (5.75,1) circle [radius=0.1];
\draw [fill=green] (6,1) circle [radius=0.1];
\draw [fill=green] (6.25,1) circle [radius=0.1];
\draw [dashed] (0.875, 0.5) -- (0.875, 1.5);
\draw [dashed] (1.375, 0.5) -- (1.375, 1.5);
\draw [dashed] (2.875, 0.5) -- (2.875, 1.5);
\draw [dashed] (2.375, 0.5) -- (2.375, 1.5);
\draw [dashed] (3.875, 0.5) -- (3.875, 1.5);
\draw [dashed] (4.375, 0.5) -- (4.375, 1.5);
\draw [dashed] (5.875, 0.5) -- (5.875, 1.5);
\draw [dashed] (5.375, 0.5) -- (5.375, 1.5);
\node (A) at (0.625, 0.6){1};
\node (B) at (1.125, 0.6){2};
\node (A) at (1.875, 0.6){3};
\node (A) at (2.625, 0.6){2};
\node (A) at (3.375, 0.6){1};
\node (A) at (4.125, 0.6){2};
\node (A) at (4.875, 0.6){3};
\node (A) at (5.625, 0.6){2};
\node (A) at (6.125, 0.6){1};
\end{tikzpicture}
\caption{A binary necklace partition for $k=3, q=4$}
\end{figure}

However, the $3$-robber case is a too restrictive to hope for the existence of a binary splitting for any configuration of more than
two kinds of beads, as we will point out in Example~\ref{ex:no-binary}.

We will now give a combinatorial proof that binary $k$-splittings exist for any number $k$ of thieves and two kinds of beads. 
The proof uses a sliding window trick that was already used by Goldberg and West~\cite{goldberg1985}, and Meunier~\cite{meunier2008}
in a similar context. Our proof below in particular is a combinatorial proof of Alon's result for two kinds of beads. Such combinatorial
proofs for $q=2$ were found before by Epping, Hochst\"attler, and Oertel~\cite{epping2004}; Meunier~\cite{meunier2008} treats
some additional cases. Epping, Hochst\"attler, and Oertel consider ``a paint shop problem'': in a factory cars of different car body types
and colors are built to order. The number of color changes between successive cars is to be minimized. Epping, Hochst\"attler, and Oertel,
unaware of Alon's earlier work on splitting necklaces, consider regular instances, where the number of cars of a certain body type and
color does not depend on body type or color. This is a necklace splitting problem where colors correspond to thieves and car body types 
correspond to kinds of beads. This problem is equivalent to necklace splitting if each car body type is colored equally often by each color
(but this number may depend on the body type). In this interpretation Theorem~\ref{thm:cyclic-4-splitting} shows that for four colors 
two pairs of disjoint color changes can be disallowed without increasing the upper bound for the number of color changes. 
The theorem below gives further constraints on the color changes we have to admit for any 
number of colors and two car body types.

\begin{thm}
\label{thm:binary-splitting}
	Given a necklace with $q = 2$ kinds of beads and $k$ thieves, there exists a binary necklace splitting of size~${2(k-1)}$. 
\end{thm}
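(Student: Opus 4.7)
My plan is to prove Theorem~\ref{thm:binary-splitting} by induction on $k$, using a sliding-window argument as the main combinatorial tool. The base case $k = 1$ is trivial, and $k = 2$ reduces to the classical Goldberg--West result, in which the two thieves naturally occupy adjacent vertices of the $1$-dimensional hypercube.

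The main lemma is a sliding-window existence result: \emph{any linear necklace with $ma$ beads of type $A$ and $mb$ beads of type $B$ contains a contiguous sub-interval of length $a+b$ with exactly $a$ beads of type $A$ and $b$ beads of type $B$}. To prove this, I partition the necklace into $m$ consecutive length-$(a+b)$ blocks; their $A$-counts sum to $ma$, so the average equals $a$, and some block has $A$-count at most $a$ while some other has $A$-count at least $a$. Sliding a window of length $a+b$ between these two blocks changes the $A$-count by $0$ or $\pm 1$ per step, so by a discrete intermediate value argument some window has $A$-count exactly $a$ (and hence $B$-count $b$).

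For the inductive step, apply the lemma with $m = k$ to obtain a sub-interval $I$ with $(a, b)$ beads, which I assign to the new thief placed at some vertex $v$ of the $t$-dimensional hypercube, with $t = \lceil \log_2 k \rceil$. The residual necklace is a prefix $L$ and suffix $R$ together containing $(k-1)(a, b)$ beads; concatenating $L$ and $R$ yields a reduced linear necklace $L \cdot R$, and by the inductive hypothesis there is a binary $(k-1)$-splitting of $L \cdot R$ of size $2(k-2)$ with the remaining $k-1$ thieves placed at vertices of the $t'$-dimensional hypercube, where $t' = \lceil \log_2 (k-1) \rceil$. Transferring the reduced cuts back to the original necklace, at most one piece of the reduced splitting straddles the seam between $L$ and $R$; inserting $I$ between its $L$-part and $R$-part (both assigned to the same inductive thief $u$ at vertex $w$) adds two new cuts at the boundaries of $I$, yielding $2(k-2) + 2 = 2(k-1)$ cuts in total. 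The resulting splitting is binary provided $v$ is Hamming-adjacent to $w$; when $t' < t$ this is immediate (choose $v$ to differ from $w$ in one of the new coordinates).

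The main obstacle is the case $t' = t$, which arises when $k - 1 > 2^{t-1}$: the inductive and outer hypercubes coincide, and one must locate an unused vertex Hamming-adjacent to $w$ among only $2^t - (k-1)$ unused vertices. For $k$ close to $2^t$ this may fail for arbitrary choices of $u$ and $I$. To handle it I would strengthen the inductive hypothesis to prescribe which thief $u$ straddles the seam of the reduced splitting, and exploit the freedom in selecting $I$ offered by the sliding-window lemma---typically many valid sub-intervals $I$ exist, and choosing one compatible with a desired straddling thief allows a coordinated embedding of the inductive hypercube into the outer one. Working out this coordination is the central technical difficulty of the proof.
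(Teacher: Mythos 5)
Your sliding-window lemma is correct and is essentially the same tool the paper uses, but your induction is structured differently: you peel off a single thief's share at a time, whereas the paper applies the sliding window with a window of length roughly $k/2$ so as to bisect the \emph{set of thieves} into two groups of sizes $\lfloor k/2\rfloor$ and $\lceil k/2\rceil$, recurses on each balanced half, and then assigns the groups to the two half-cubes obtained by prepending a bit $0$ or $1$ to the inductively obtained strings of length $t-1$. The paper also strengthens the inductive hypothesis to require that the first and last pieces of the recursive splitting go to the same thief; this ensures there is exactly one cross-group adjacency, between some $t_x$ and some $u_y$, and an XOR-twist of the $u$-group strings then makes that adjacency a hypercube edge with no further work. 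This bisection structure is precisely what makes the hypercube embedding painless: the two recursive splittings live in disjoint half-cubes of the right dimensions.

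Your one-thief-at-a-time recursion runs into the obstruction you yourself flag, and in my assessment it is a genuine gap rather than a technicality. Whenever $k-1 > 2^{t-1}$ (e.g.\ $k=4,6,7,8,11,\dots$) the reduced problem already requires the full $t$-cube, and the single unused vertex (or the few unused vertices) need not be Hamming-adjacent to the seam-straddling thief $w$. Already at $k=4$: in a binary $3$-splitting one thief $B$ necessarily receives every other piece and so must sit at a vertex adjacent to both other thieves, say $B=00$, $A=01$, $C=10$; the sole unused vertex $11$ is adjacent to $A$ and $C$ but not to $B$, so if the seam lands in a $B$-piece there is nowhere to put the fourth thief. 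Repairing this by ``prescribing who straddles the seam'' would amount to proving a substantially stronger inductive statement (that for any prescribed point of the necklace one can find a binary $(k-1)$-splitting placing a designated thief over that point), and the freedom in choosing the window $I$ is not obviously enough to guarantee compatibility, since the recursive splitting is only constructed after $I$ is fixed. As written the proposal does not close this gap; the paper's balanced bisection avoids it entirely.
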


\begin{proof}
To prove this result, we first rescale the necklace and place it along the interval~$[0, k]$, without loss of generality regarding the beads as 
equally sized intervals. We assume for simplicity that the necklace consists of black and white beads.
Given a necklace of length~$k$, a \emph{subnecklace} of length~$b$ for some integer $b \ge 0$ is defined as some 
length~$b$ subinterval $[t, t+b]$ of $[0, k]$, $0\le t\le k-b$. This subnecklace is called $\emph{balanced}$ if the amount 
of each color bead is precisely enough to satisfy $b$ thieves, that is, there is a fraction of $\frac{b}{k}$ of each color in this
subnecklace.

We now introduce two lemmas:

\begin{lem}
	If $b$ divides~$k$, then any length~$k$ necklace contains a length~$b$ balanced subnecklace.
\end{lem}

\begin{proof}
Consider the $\frac{k}{b}$ subnecklaces which lie along the intervals $[0, b],\ [b, 2b],\ \dots ,\ [k-b, k]$. 
If any of these necklaces is balanced, the statement is true already. Otherwise, some of these subnecklaces 
contain too many white beads and some contain too many black beads. It is impossible for all of the 
subnecklaces to be one of these types, since the subnecklaces on average have bead colors proportionally 
equal to that of the full necklace. Therefore, some two adjacent subnecklaces have too many of opposite 
types of beads. It follows by continuity that some subnecklace between these necklaces is balanced, as required.
\end{proof}

\begin{lem}
\label{lem:long-balanced}
	For any integer $b, \ 1\le b\le k-1$, any length~$k$ necklace contains either a length~$b$ or a length~${k-b}$ 
	balanced subnecklace.
\end{lem}

\begin{proof}
We may first line up $b$ identical copies of the necklace along $[0,k],\ [k, 2k],\ \dots ,\ [(b-1)k, bk]$. 
Regarding this as a single necklace, there is by the previous lemma some length~$b$ balanced 
subnecklace of this necklace. If this subnecklace is contained entirely in one of the length~$k$ 
necklaces, then we are done. Otherwise, the complement of the parts of the length~$k$ necklaces 
intersecting the balanced $b$ subnecklace is a balanced $k-b$ subnecklace.
\end{proof}

We now return to the proof of the theorem. We will prove a stronger hypothesis by strong induction 
on the number of thieves~$k$. In particular, we will prove that there always exists a binary splitting 
with $2(k-1)$ cuts for which the first and last pieces go to the same person. The base case $k=1$ 
is clear. 

Now suppose that for some $k\ge 2$, there is a length~$k$ necklace of two colors which must be 
split in a binary fashion among~$k$ thieves, with the first and last pieces going to the same person, 
and $2^{t-1}<k\le 2^t$. Since $k\ge 2$, we know that a binary necklace splitting exists for any necklace 
of length~$\lfloor \frac{k}{2}\rfloor$ or~$\lceil\frac{k}{2}\rceil$. By Lemma~\ref{lem:long-balanced}, 
there exists some length~$\lfloor \frac{k}{2}\rfloor$ or length~$\lceil\frac{k}{2}\rceil$ balanced subnecklace. 
Suppose the former exists; the case that the latter exists is analogous. Split the $k$ thieves into two groups; 
thieves $t_1, \ldots, t_{\lfloor \frac{k}{2}\rfloor}$ and thieves $u_1, \ldots, u_{\lceil\frac{k}{2}\rceil}$, and let 
$[b, b+\lfloor\frac{k}{2}\rfloor]$ be a balanced subnecklace with $0\le b\le \lceil\frac{k}{2}\rceil$. 
Then for $1\le i\le \lfloor \frac{k}{2}\rfloor$, we may assign a length $t-1$ binary string $b_i$ to thief 
$t_i$ such that the length~$b$ balanced subnecklace can be split using $2(\lfloor\frac{k}{2}\rfloor -1)$ 
cuts and assigned to these thieves in such a manner that adjacent pieces are assigned to thieves 
whose binary strings differ by one bit. We then combine the $[0, b]$ and $[b+\lfloor\frac{k}{2}\rfloor, k]$ 
parts of the necklace into a single length $\lceil\frac{k}{2}\rceil$ balanced subnecklace. We can similarly 
assign, for $1\le j\le \lceil\frac{k}{2}\rceil$, a length $t-1$ binary string $c_j$ to thief $u_j$ such that this length 
$k-b$ balanced subnecklace can be split using $2(\lceil\frac{k}{2}\rceil -1)$ cuts and assigned to these thieves 
in such a manner that adjacent pieces are assigned to thieves whose binary strings differ by one bit.

Now we claim that, using these $2+2(\lfloor\frac{k}{2}\rfloor -1)+2(\lceil\frac{k}{2}\rceil-1)$ cuts 
(including the cuts at $b$ and $b+\lfloor\frac{k}{2}\rfloor$), we may find a distribution and binary string assignment 
which satisfies the required conditions. Indeed, we may distribute the necklace parts exactly as they are distributed 
in the subnecklaces; this ensures that the same person gets the first and last pieces in the overall splitting. 
Furthermore, it ensures that there is exactly one adjacency between the $t$-thieves and the $u$-thieves; say between 
$t_x$ and $u_y$, which occurs twice.
This can be ensured even in the case that a cut in the $u$-subnecklace corresponds exactly to the location of the $t$-subnecklace, because in this case the extra cut available can be used to make an extra length $0$ piece on an appropriate end of the $t$-subnecklace. and assign this piece to a $u$-thief.
 
Let $||$ denote concatenation and $\oplus$ denote the bitwise XOR of binary strings,
that is, $0 \oplus 0 = 0 = 1 \oplus 1$ and $0 \oplus 1 = 1 = 1 \oplus 0$. Then we may assign $0||b_i$ to each thief~$t_i$, 
and $1||(c_j\oplus b_x\oplus c_y)$ to each thief~$u_j$. These $k$ binary strings all have length $t$ and are unique; it remains to check that every pair of adjacent pieces in the partition is assigned to thieves whose binary representations differ by one bit. Due to construction, this holds for all pairs of adjacent pieces assigned to thieves who are either both $t$-thieves or both $u$-thieves. Furthermore, the binary strings assigned to $t_x$ and $u_y$ are $0||b_x$ and $1||(c_y\oplus b_x\oplus c_y)=1||b_x$, which differ by one bit as required. This completes the induction, so the theorem holds as required.  
\end{proof}

\begin{ex}
\label{ex:no-binary}
	For three thieves a binary necklace splitting forces that every other piece goes to the same thief. For more than 
	two kinds of beads this condition is too restrictive to guarantee the existence of a binary necklace splitting for any
	order of the beads. Consider the following configuration:
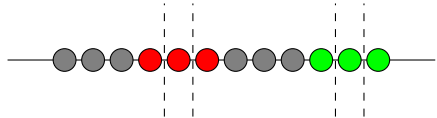
\begin{figure}[h!]
\centering
\begin{tikzpicture}[xscale=1.5, yscale=1.5]
\draw (0,1) -- (3.75,1);
\draw [fill=gray] (0.5,1) circle [radius=0.1];
\draw [fill=gray] (0.75,1) circle [radius=0.1];
\draw [fill=gray] (1,1) circle [radius=0.1];
\draw [fill=red] (1.25,1) circle [radius=0.1];
\draw [fill=red] (1.5,1) circle [radius=0.1];
\draw [fill=red] (1.75,1) circle [radius=0.1];
\draw [fill=gray] (2,1) circle [radius=0.1];
\draw [fill=gray] (2.25,1) circle [radius=0.1];
\draw [fill=gray] (2.5,1) circle [radius=0.1];
\draw [fill=green] (2.75,1) circle [radius=0.1];
\draw [fill=green] (3,1) circle [radius=0.1];
\draw [fill=green] (3.25,1) circle [radius=0.1];
\draw [dashed] (1.375, 0.5) -- (1.375, 1.5);
\draw [dashed] (1.625, 0.5) -- (1.625, 1.5);
\draw [dashed] (2.875, 0.5) -- (2.875, 1.5);
\draw [dashed] (3.125, 0.5) -- (3.125, 1.5);
\end{tikzpicture}
\caption{A configuration of beads without binary splitting for $k = 3$ thieves}
\end{figure}

The necklace above does not admit a binary necklace splitting among $k = 3$ thieves with the optimal number of $(k-1)q = 6$ cuts. 
Four of those cuts are forced (the dashed lines), as the red and green blocks have to be cut into three pieces each. Of the remaining two cuts one must
occur in each of the gray blocks since no thief receives three gray beads. Say thief $1$ and $3$ are not supposed to receive adjacent pieces.
Then both the central red and green bead have to go to thief~$2$, since otherwise thief~$2$ would receive two green or two red beads,
but there are two pieces of the necklace in between these beads, so they cannot both go to thief~$2$. This reasoning easily generalizes to $q \ge 3$
kinds of beads by appending blocks of three beads of the same kind.
\end{ex}

In summary, binary necklace splittings exist for $k = 4$ and any~$q$
and for any~$k$ and~$q \le 2$. They do not necessarily exist for $k = 3$ and~${q \ge 3}$.
Moreover, the validity of Ramos' conjecture would imply that 
binary splittings exist for $k$ a power of two and any~$q$. We thus conjecture:

\begin{conj}
	Given a necklace with $q$ kinds of beads and $k \ge 4$ thieves, there exists a binary necklace splitting of size~${(k-1)q}$. 
\end{conj}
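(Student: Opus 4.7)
The plan is to attack the conjecture in two stages corresponding to whether $k$ is a power of two, directly extending the moment-curve argument of Theorem~\ref{thm:cyclic-4-splitting}. In the first stage, with $k = 2^t$, I assume Ramos' conjecture $\Delta(j, t) = \lceil j(2^t-1)/t\rceil$ for $t \mid j$. If $t \nmid q$, pad the necklace by appending, for each of $r := t\lceil q/t\rceil - q$ new kinds, a block of $2^t$ beads of that kind, so that the total number of bead kinds becomes $q + r$ with $t \mid q + r$. Place the padded necklace on the moment curve $\gamma(x) = (x, x^2, \ldots, x^d)$ in $\R^d$ with $d = (q+r)(2^t-1)/t$. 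By the Ramos conjecture there exist $t$ affine hyperplanes $H_1, \ldots, H_t$ in $\R^d$ that simultaneously equipartition the $q+r$ discrete masses. The sign vector of any point of $\R^d \setminus \bigcup_i H_i$ with respect to $(H_1, \ldots, H_t)$ labels the orthant containing it by a vertex of the hypercube $\{0,1\}^t$; assigning the $2^t$ thieves to these vertices, two consecutive pieces of the necklace are separated by a single hyperplane crossing, which flips exactly one bit of the label, giving a binary splitting. The total number of cuts is at most $td = (q+r)(2^t-1)$, of which $2^t - 1$ are absorbed by each of the $r$ padding kinds, leaving precisely $(2^t-1)q$ cuts on the original portion. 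The cut-adjustment lemma from the beginning of Section~\ref{sec:necklaces} handles cuts passing through beads.

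For the second stage, when $2^{t-1} < k < 2^t$, I would reduce to the power-of-two case by padding the necklace itself so that the bead counts become $2^t a_i$ instead of $k a_i$. Concretely, append $2^t - k$ copies of a block containing $a_i$ beads of kind $i$, for $i = 1, \ldots, q$. The extended necklace admits a binary $2^t$-splitting by the first stage. The crucial step is to arrange, by a careful choice of the padding configuration and of the hyperplane arrangement produced by the equipartition, that $2^t - k$ vertices of the hypercube are occupied only by ``ghost'' thieves whose pieces lie entirely in the appended padding region. Restricting the assignment to the remaining $k$ vertices then yields a binary $k$-splitting of the original necklace with at most $(k-1)q$ cuts.

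The principal obstacle is that Ramos' conjecture is itself open in nearly all cases, so this program is necessarily conditional once $t \geq 3$. The second stage presents an additional combinatorial difficulty: isolating ghost thieves to the padding region is subtle because the equipartitioning hyperplanes are produced only existentially and need not respect the padding/original boundary, so one must either carefully engineer the auxiliary beads to force this separation or develop a separate combinatorial argument. A direct extension of the sliding-window argument of Theorem~\ref{thm:binary-splitting} from $q=2$ to $q \geq 3$ would bypass Ramos entirely but appears to require simultaneously balanced-subnecklace lemmas across several colors, which do not follow from the single-color version used there.
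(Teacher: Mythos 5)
This statement is a \emph{conjecture}: the paper does not prove it, and in fact explicitly remarks that the case $k = 2^t$ with $\log_2 k \mid q$ is equivalent to the assertion that Ramos' conjecture has no counterexamples along the moment curve. So no argument that terminates in a proof of the conjecture is available here, and any proposal should be read as a program rather than a proof.

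Your Stage~1 is, modulo minor bookkeeping, exactly the reduction the paper already sketches just before the conjecture is stated: pad to a multiple of $t$ kinds, place the necklace on the moment curve in $\R^{(q+r)(2^t-1)/t}$, invoke the Ramos equipartition by $t$ hyperplanes, read off hypercube labels from the sign vectors, and use the cut-adjustment lemma. The arithmetic is correct -- $td = (q+r)(2^t-1)$ total intersections, $(2^t-1)r$ absorbed by padding, $(2^t-1)q$ on the original. But this is conditional on Ramos' conjecture for the relevant pairs $(j,t)$, which is open for all $t\ge 3$; the paper knows this, and that is precisely why the result is stated as a conjecture and not a theorem.

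Stage~2 contains a genuine gap that you yourself flag, and I want to be explicit that it is not a ``detail'': the equipartition giving the binary $2^t$-splitting is produced by a nonconstructive topological existence argument, and there is no mechanism in that argument to force the hyperplanes to separate the padding blocks from the original necklace, nor to force the orthants containing only padding to be assigned to a prescribed subset of hypercube vertices. The hyperplane arrangement cuts the whole moment curve, padding and original alike, and the distribution of orthants along the curve is whatever the degree/Borsuk--Ulam-type argument happens to produce. Without some additional constraint (e.g. a degree-theoretic argument on a restricted configuration space, or an explicit deformation argument moving hyperplanes toward the padding boundary while preserving the equipartition), the ``ghost thieves live entirely in the padding'' step is not a consequence of anything you have cited. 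Even if Ramos' conjecture were known, this step would still be missing for $k$ not a power of two. The paper offers no route here either; the only unconditional cases it establishes are $k=4$, arbitrary $q$ (Theorem~\ref{thm:cyclic-4-splitting}) and $q\le 2$, arbitrary $k$ (Theorem~\ref{thm:binary-splitting}). Your closing observation -- that a multicolor sliding-window argument would need simultaneously balanced subnecklaces across several colors, which the single-color lemmas do not provide -- is accurate and identifies exactly where the elementary combinatorial route breaks down for $q\ge 3$.
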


For $k$ a power of two this conjecture is equivalent to the assertion that there are no counterexamples to
Ramos' conjecture along the moment curve, provided that $\log_2 k$ divides~$q$.

% --------------------------------------------------------------------------------------------------------------%

\section{KKM-type results and fair cake division}
\label{sec:KKM}

We now consider the following fair division problem: We wish to divide some rectangular ``cake'' into $d+1$ slices 
(indexed by~$i$) such that each person (indexed~$j$) can have their most preferred slice. That is, no person 
should be envious of another's piece --- an ``envy-free'' division. 

Being reasonable bakers, we bake our cake in a unit-length rectangular pan, place our finished cake along~$[0, 1]$, 
and slice perpendicular to this axis. Slicing the cake into a particular set of $d+1$ slices can be specified by giving the 
$(d+1)$-tuple $(x_0, \ldots, x_{d})$ of slice lengths, so that the slices are $(0, x_0), (x_0, x_0+x_1)$, etc. 
Note that $x_i \geq 0$ and $\sum_i x_i = 1$, so the $x_i$ are those points lying in the first orthant in the hyperplane 
specified by $\sum_i x_i = 1$, so the space of such divisions is the $d$-simplex~$\Delta_d$.

We will assume, in the language of Su~\cite{su1999}, that everyone is hungry, that is, any nonempty piece is preferable to any empty 
one, and that the preference sets --- subsets $C_i^{j} \subseteq \Delta_d$ of the space of possible divisions of the cake 
corresponding to the $j$-th individual preferring the $i$-th slice ---  are closed. Under these conditions an envy-free division
of the cake always exists: Su reports on a proof of Simmons that for any given preference sets $C^{i}_j \subseteq \Delta_d$,
$0 \le i,j \le d$, as above $d$ cuts of the unit interval cake suffice to distribute the cake among the $d+1$ individuals in an
envy-free manner. The core of the proof is an application of Sperner's lemma which we recall here. Let $K$ be a triangulation 
of~$\Delta_d$ on vertex set~$V$. Identify the vertices of $\Delta_d$ with $0, \dots, d$. A \emph{Sperner coloring} of~$K$ is a
map $c\colon V \longrightarrow \{0, \dots, d\}$ such that if vertex $v$ of~$K$ is contained in the face $\sigma$ of~$\Delta_d$
then $c(v) \in \sigma$. Here we think of $\{0, \dots, d\}$ as a set of $d+1$ colors.

\begin{lem}[Sperner's lemma~\cite{sperner1928}]
\label{lem:sperner}
	Let $K$ be a Sperner colored triangulation of~$\Delta_d$. Then there is a facet of~$K$ that exhibits all $d+1$ colors.
	In fact, there is always an odd number of such facets.
\end{lem}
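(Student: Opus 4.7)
The plan is to prove Sperner's lemma by induction on $d$, using the standard door-counting (path-following) argument. The base case $d=0$ is immediate: a single vertex must be labelled $0$, giving exactly one fully labelled facet. For the inductive step, assume the lemma holds for triangulations of $\Delta_{d-1}$ and let $K$ be a Sperner coloured triangulation of $\Delta_d$.

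Call a $(d-1)$-face of $K$ a \emph{door} if its vertices realise exactly the colour set $\{0,1,\dots,d-1\}$, and call a $d$-simplex of $K$ \emph{full} if it realises all $d+1$ colours and \emph{almost full} if its colour set is exactly $\{0,1,\dots,d-1\}$ (so one colour is used twice). I would form an auxiliary graph $G$ whose vertex set consists of the $d$-simplices of $K$ together with one extra vertex $\infty$ representing the exterior of $\Delta_d$. Two simplices of $K$ are joined by an edge for each door they share, and a simplex is joined to $\infty$ for each door of $K$ lying on $\partial\Delta_d$.

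Three elementary observations drive the argument. First, a full simplex has exactly one door (the face opposite its vertex coloured $d$), so has degree $1$ in $G$. Second, an almost full simplex has exactly two doors (the two faces opposite its repeated colour), so has degree $2$. Every other $d$-simplex has degree $0$. Third, and this is the only place where the Sperner hypothesis on boundary colours is really used, a door on $\partial\Delta_d$ must lie in the facet of $\Delta_d$ spanned by $\{0,1,\dots,d-1\}$, because any other boundary facet of $\Delta_d$ omits some colour $i<d$ and therefore so does every $(d-1)$-simplex of $K$ contained in it. Consequently the degree of $\infty$ equals the number of doors in the induced triangulation of this particular facet of $\Delta_d$, which is a Sperner coloured triangulation of $\Delta_{d-1}$; by the inductive hypothesis this number is odd.

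Now I would finish with the handshake lemma: the total degree in $G$ is even, so the number of odd-degree vertices is even. Since $\infty$ is odd and the only other odd-degree vertices are the full simplices, the number of full simplices is odd, which is exactly the strengthened conclusion of the lemma. The main subtlety to watch for is the third observation above: one must verify carefully that the Sperner colouring rule forces all boundary doors to concentrate on the single face opposite vertex $d$, since without this the reduction to the $(d-1)$-dimensional instance would fail. Everything else is bookkeeping, and the base case handles the trivial parity.
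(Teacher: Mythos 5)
Your proof is correct: it is the classical door-counting argument for Sperner's lemma, and each step --- the degree bookkeeping (full simplices have one door, almost-full simplices have two, all others have zero), the observation that the Sperner colouring condition forces every boundary door into the facet of~$\Delta_d$ opposite vertex~$d$ so that the inductive hypothesis applies, and the final handshake-lemma parity count anchored by the $d=0$ base case --- is sound. The paper does not prove this lemma at all, citing Sperner's 1928 paper and treating it as classical, so there is no argument of the authors' to compare against; for what it is worth, your path-following flavour is the same one the authors themselves use later when they extend Sperner-type results to pseudomanifolds via a PL path-following lemma.
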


Sperner's lemma and Brouwer's fixed point theorem, that any continuous self-map of the closed $d$-ball has a fixed point,
can be easily deduced from one another. We recall a third closely related result, the KKM theorem.
A \emph{KKM cover} of the $d$-simplex $\Delta_d$ is a collection of closed subsets 
$C_0, \dots, C_d \subseteq \Delta_d$ such that for any subset $I \subseteq \{0, \ldots, d\}$, 
the face of $\Delta_d$ determined by $I$ is covered by $\bigcup_{i \in I} C_i$.

\begin{thm}[Knaster, Kuratowski, Mazurkiewicz~\cite{knaster1929}]
\label{thm:kkm}
	Let $C_0, \dots, C_d \subseteq \Delta_d$ be a KKM cover. Then $\bigcap_i C_i \ne \emptyset$.
\end{thm}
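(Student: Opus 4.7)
The plan is to deduce the KKM theorem from Sperner's lemma (Lemma~\ref{lem:sperner}) by the classical compactness argument. The strategy is to produce, for each $n$, a point in $\Delta_d$ that is ``close'' to lying in every $C_i$ simultaneously, and then to extract a limit point using that $\Delta_d$ is compact and each $C_i$ is closed.

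First, I fix a sequence of triangulations $K_n$ of $\Delta_d$ whose mesh tends to zero as $n \to \infty$; barycentric subdivisions iterated $n$ times work. For each vertex $v$ of $K_n$, let $\sigma(v) \subseteq \{0, \dots, d\}$ denote the minimal face of $\Delta_d$ containing~$v$. The KKM covering hypothesis says that $v$ lies in $\bigcup_{i \in \sigma(v)} C_i$, so I may choose a color $c_n(v) \in \sigma(v)$ with $v \in C_{c_n(v)}$. This choice is precisely a Sperner coloring of~$K_n$.

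By Sperner's lemma, $K_n$ contains a fully labelled facet $\tau_n$ with vertices $v_0^n, \dots, v_d^n$ such that $c_n(v_i^n) = i$ and hence $v_i^n \in C_i$ for each~$i$. Since the mesh of $K_n$ goes to zero, the diameter of $\tau_n$ goes to zero, so the $d+1$ vertex-sequences all have the same limit points along any convergent subsequence. By compactness of $\Delta_d$, passing to a subsequence I may assume $v_0^n \to x$ for some $x \in \Delta_d$, and then $v_i^n \to x$ for every $i$ as well. Each sequence $(v_i^n)_n$ lies in the closed set $C_i$, so $x \in C_i$ for every $i$, giving $x \in \bigcap_i C_i$ and hence $\bigcap_i C_i \neq \emptyset$.

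The only real subtlety is verifying that the coloring $c_n$ is a genuine Sperner coloring, and this is exactly where the KKM covering hypothesis is used: a vertex $v$ lying on a face $\sigma$ of $\Delta_d$ must receive a label from $\sigma$, which is possible precisely because the sub-cover $\{C_i : i \in \sigma\}$ covers that face. Everything else is an application of compactness and closedness and is routine; I would not expect any genuine obstacle beyond ensuring the coloring is well-defined (choosing $c_n(v)$ requires only the axiom of choice for a finite selection per vertex, which is trivial).
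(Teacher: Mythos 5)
Your proof is correct, but it takes a genuinely different route from the paper's. You deduce the KKM theorem from Sperner's lemma by the classical mesh-refinement and compactness argument: on a triangulation of small mesh, each vertex $v$ can be Sperner-colored by choosing an index $i$ in the carrier face of $v$ with $v \in C_i$ (possible precisely by the KKM condition), Sperner's lemma produces a fully labelled simplex, and letting the mesh tend to zero with a subsequential limit and closedness of the $C_i$ gives a common point. The paper instead argues through Brouwer's fixed point theorem: assuming $\bigcap_i C_i = \emptyset$, the normalized distance-vector map
\[
f\colon \Delta_d \longrightarrow \Delta_d, \qquad x \mapsto \frac{1}{\sum_i \dist(x, C_i)}\bigl(\dist(x, C_0), \dots, \dist(x, C_d)\bigr)
\]
is a well-defined continuous self-map, yet the KKM condition forces $f$ to move every point out of the relative interior of its carrier face, so $f$ has no fixed point, contradicting Brouwer. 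Both arguments are standard and similar in length. Your approach has the advantage of staying entirely within the combinatorial Sperner framework, which is also the tool used elsewhere in the paper (and indeed the paper later proves the colorful KKM theorem by exactly your mesh-and-limit method); the paper's one-paragraph proof of the basic KKM theorem is instead the quick analytic shortcut through Brouwer, which is stated but not proved. There is no gap in your argument.
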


This is simply due to the fact that $\bigcap_i C_i = \emptyset$ would imply that the map
$$f\colon \Delta_d \longrightarrow \Delta_d, x \mapsto \frac{1}{\sum_i \dist(x, C_i)}(\dist(x, C_0), \dots, \dist(x, C_d))$$
does not have a fixed point --- the KKM condition means that if $x$ is in the relative interior of the face $\sigma \subseteq \Delta_d$
then $f(x)$ is not in the relative interior of~$\sigma$ --- in contradiction to Brouwer's fixed point theorem.

Using partitions of unity subordinate to the covering and Birkhoff's theorem that any doubly stochastic matrix is a convex combination
of permutation matrices, Gale was able to extend the KKM theorem to the following colorful version:

\begin{thm}[Colorful KKM theorem, Gale~\cite{gale1984}]
\label{thm:colorful-KKM}
	Given $d+1$ KKM coverings $\{C^0_i\}_{i = 0, \dots, d}, \dots, \{C^d_i\}_{i = 0, \dots, d}$ of the $d$-simplex, 
	there exists some permutation $\pi$ of $\{0, \dots, d\}$ such that $\bigcap_{i = 0, \dots, d} C^i_{\pi(i)}\ne \emptyset$. 
\end{thm}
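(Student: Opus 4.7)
The plan is to mimic the short proof of the ordinary KKM theorem sketched just after Theorem~\ref{thm:kkm}---partition-of-unity plus Brouwer's theorem---and bring in Birkhoff's theorem to detect a common permutation, which is the strategy attributed to Gale in the excerpt.

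First, I would replace the closed KKM covers by continuous partitions of unity. For each $j \in \{0, \ldots, d\}$, build $\phi^j_0, \ldots, \phi^j_d \colon \Delta_d \to [0,1]$ with $\sum_i \phi^j_i \equiv 1$ and $\phi^j_i(x) > 0$ only when $\dist(x, C^j_i) < \varepsilon$ for a small $\varepsilon > 0$. By compactness of $\Delta_d$ and finiteness of the symmetric group $S_{d+1}$, it then suffices to produce, for each $\varepsilon > 0$, a point $x_\varepsilon \in \Delta_d$ and a permutation $\pi_\varepsilon$ with $\phi^j_{\pi_\varepsilon(j)}(x_\varepsilon) > 0$ for all $j$; extracting a subsequence along which $\pi_\varepsilon$ stabilizes to some $\pi$ and $x_\varepsilon$ converges yields a point in $\bigcap_j C^j_{\pi(j)}$ in the limit.

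Next, assemble the $\phi^j_i$'s into the column-stochastic matrix-valued continuous map $M(x) = (\phi^j_i(x))_{ij}$, and apply Brouwer's theorem to the self-map $F(x)_i := \sum_j x_j\, \phi^j_i(x)$ of $\Delta_d$ (which indeed lands in $\Delta_d$ since $\sum_i F(x)_i = \sum_j x_j \cdot 1 = 1$). A Brouwer fixed point $x^*$ satisfies the marginal identity $x^*_i = \sum_j x^*_j \phi^j_i(x^*)$, so the nonnegative matrix $N_{ij} := x^*_j\, \phi^j_i(x^*)$ has both row and column marginals equal to the vector $x^*$. Rescaling $N$ (or its support structure) to doubly stochastic form, Birkhoff's theorem expresses it as $\sum_\pi \lambda_\pi P_\pi$; any permutation $\pi$ appearing with positive coefficient then automatically satisfies $\phi^j_{\pi(j)}(x^*) > 0$ for every $j$, as desired.

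The main obstacle, as I see it, is the passage in the third step from the coupling matrix $N$ (whose marginals are the potentially non-uniform vector $x^*$) to a genuine doubly stochastic matrix, or equivalently verifying Hall's marriage condition on the support bipartite graph of $N$. When $x^*$ is the barycenter this is immediate. When $x^*$ lies on a proper face $\sigma$ of $\Delta_d$, one can restrict each cover to $\sigma$---which inherits the KKM property since every face of $\sigma$ is a face of $\Delta_d$---and induct on dimension (the fixed-point equation forces $\phi^j_i(x^*) = 0$ for $i \notin \mathrm{supp}(x^*)$ and $j \in \mathrm{supp}(x^*)$, so most of $M(x^*)$ decouples from the missing coordinates). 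The intermediate case, where $x^*$ has full support but is non-uniform, is the most delicate and would require either a Sinkhorn-type rescaling argument, a direct combinatorial verification via Hall using the KKM hypothesis, or a more refined choice of self-map $F$ whose fixed point is forced into a more rigid form that Birkhoff applies to cleanly.
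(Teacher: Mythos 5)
Your route differs from the paper's proof of Theorem~\ref{thm:colorful-KKM}, which is combinatorial: it triangulates $\Delta_d$ finely, passes to the barycentric subdivision, colors the vertex subdividing a $j$-dimensional face according to the $j$-th cover, and applies the ordinary Sperner lemma. What you propose is instead in the spirit of Gale's original argument and of the paper's proof of Theorem~\ref{thm:strong-colorful-KKM}. Measured against that, there is a genuine gap in your third step, one you only partially anticipated. A Brouwer fixed point of $F(x)_i = \sum_j x_j\,\phi^j_i(x)$ yields a nonnegative matrix $N_{ij} = x^*_j\,\phi^j_i(x^*)$ whose row and column marginals both equal $x^*$, but this alone does \emph{not} force a permutation into the support. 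For instance, with $x^* = \bigl(\tfrac1{10},\tfrac1{10},\tfrac8{10}\bigr)$ in the interior of $\Delta_2$ and
\[
\bigl(\phi^j_i(x^*)\bigr)_{i,j}=
\begin{pmatrix}
0 & 0 & \tfrac18\\
0 & 0 & \tfrac18\\
1 & 1 & \tfrac68
\end{pmatrix},
\]
one checks $F(x^*)=x^*$, yet rows $0$ and $1$ are supported only in column $2$, so Hall's condition fails and no permutation lies in the support. Sinkhorn rescaling cannot help because it preserves the zero pattern, and since $x^*$ is interior your face-restriction induction never triggers. Nothing in the KKM hypothesis excludes such a configuration at a Brouwer fixed point, so the gap is not cosmetic.

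The repair is exactly what the paper does for Theorem~\ref{thm:strong-colorful-KKM}. Replace your $x$-weighted map by the uniformly averaged $g(x)_i = \tfrac1{d+1}\sum_j \phi^j_i(x)$, whose associated matrix $(\phi^j_i(x_0))$ has all column sums $1$ automatically and has all row sums $1$ precisely when $g(x_0)$ is the barycenter --- that is, it is already doubly stochastic, and Birkhoff applies with no rescaling or Hall verification. But Brouwer gives only a fixed point, not a preimage of the barycenter, so you must upgrade to surjectivity: after the standard normalization that $C^j_i$ is disjoint from the facet opposite vertex $i$, the map $g$ sends each proper face of $\Delta_d$ into itself, hence restricts to a degree-one self-map of $\partial\Delta_d$ and is therefore onto. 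Swapping Brouwer-plus-weighted-map for degree-theoretic surjectivity onto the barycenter with the averaged map is the missing idea; with it, your Birkhoff step goes through as intended.
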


This theorem is ``colorful'' in the same sense that B\'ar\'any's extension of Carath\'eodory's theorem~\cite{barany1982} is a colorful Carath\'eodory's theorem.
We think of the $d+1$ KKM covers as having $d+1$ distinct colors. Gale's theorem guarantees an intersection among sets of pairwise
distinct colors. Choosing all $d+1$ KKM covers to be the same specializes to the classical KKM Theorem~\ref{thm:kkm}.

Theorem~\ref{thm:colorful-KKM} is precisely equivalent to the existence of envy-free cake divisions as defined above.
This was apparently first observed by Musin~\cite[Sec.~3]{musin2015-2}.
Here the assumption that individuals are hungry guarantees that the preference sets form a KKM cover.
Su~\cite{su1999} gave a proof of the existence of envy-free cake divisions, but was unaware of Gale's earlier work. 

\begin{cor}
\label{cor:div}
	Envy-free cake divisions exist for any number of people.
\end{cor}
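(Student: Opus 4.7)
The plan is to derive Corollary~\ref{cor:div} directly from the colorful KKM Theorem~\ref{thm:colorful-KKM}, matching the cake-division setup to the hypotheses of that theorem as indicated in the paragraph preceding the corollary. So this is really a ``translation'' proof rather than a new topological argument.

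First I would parametrize cake divisions by the $d$-simplex $\Delta_d$ as in the introductory discussion, so that each point $x=(x_0,\ldots,x_d)$ encodes the slice lengths. For each person $j\in\{0,\ldots,d\}$ and each slice $i\in\{0,\ldots,d\}$ let $C_i^j\subseteq \Delta_d$ denote the set of divisions at which slice $i$ is a most preferred slice of person $j$. These sets are closed by hypothesis, and $\bigcup_i C_i^j=\Delta_d$ since every division gives each person at least one most preferred slice.

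The key verification is that for each fixed $j$, the family $\{C_i^j\}_{i=0}^d$ is a KKM cover of $\Delta_d$. Fix a nonempty $I\subseteq\{0,\ldots,d\}$ and let $\sigma_I\subseteq \Delta_d$ be the face on which $x_i=0$ for $i\notin I$. On $\sigma_I$ only the slices indexed by $I$ have positive length; since person $j$ is hungry, they will never prefer an empty slice when a nonempty one is available, so any most preferred slice at a point of $\sigma_I$ is indexed by some $i\in I$. Thus $\sigma_I\subseteq \bigcup_{i\in I} C_i^j$, establishing the KKM condition. Applying Theorem~\ref{thm:colorful-KKM} to the $d+1$ KKM covers $\{C_i^0\}_i,\ldots,\{C_i^d\}_i$ then produces a permutation $\pi$ of $\{0,\ldots,d\}$ and a common point $x^\ast\in \bigcap_{j=0}^d C^j_{\pi(j)}$. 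Interpreting $x^\ast$ as a concrete partition of the cake and assigning slice $\pi(j)$ to person $j$ gives an envy-free division: each person receives a distinct slice, and by construction that slice is a most preferred one for its recipient under the division~$x^\ast$.

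Because the content of the corollary is essentially a reformulation of Gale's theorem, there is no real technical obstacle beyond the KKM verification above. The one point that deserves care is the use of the \emph{hungry} hypothesis on boundary faces of $\Delta_d$ where some slices have length zero: this hypothesis is exactly what upgrades the trivial covering property $\bigcup_i C_i^j=\Delta_d$ into the KKM condition needed to invoke Theorem~\ref{thm:colorful-KKM}.
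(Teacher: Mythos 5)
Your proposal is correct and follows exactly the paper's proof: parametrize divisions by $\Delta_d$, form the preference sets $C_i^j$ for each person into KKM covers using the hungry hypothesis, and apply the colorful KKM theorem (Theorem~\ref{thm:colorful-KKM}) to extract a permutation assigning distinct preferred slices. Your verification of the KKM condition is somewhat more explicit than the paper's one-line justification, but the argument is the same.
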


\begin{proof}
	The configuration space of partitions of~$[0,1]$ into $d+1$ intervals is the $d$-simplex~$\Delta_d$. For each person~$j \in \{0,\dots,d\}$ 
	we have a KKM cover $C^j_0, \dots, C^j_d$ defined by $$C^j_i = \{x \in \Delta_d \: | \: \text{person} \ j \ \text{weakly prefers piece} \ i\}.$$ These
	sets are closed by assumption and they form a KKM cover since person $j$ will not prefer an empty piece over a nonempty one.
	Theorem~\ref{thm:colorful-KKM} now guarantees the existence of an envy-free cake division.
\end{proof}

The proof of Corollary~\ref{cor:div} presented in~\cite{su1999} uses Sperner's lemma and thus can easily be adapted to yield a combinatorial proof 
of Theorem~\ref{thm:colorful-KKM}, which we briefly sketch below. Bapat~\cite{bapat1989} also gave a combinatorial proof 
of Theorem~\ref{thm:colorful-KKM} using path-following methods. He established the following colorful Sperner's lemma.

\begin{thm}[Bapat~\cite{bapat1989}]
\label{thm:colored-sperner}
	Let $K$ be a triangulation of~$\Delta_d$ on vertex set~$V$. Let $c_0, \dots, c_d \colon V\longrightarrow \{0, \dots, d\}$
	be $d+1$ Sperner colorings of~$K$. Then there is a facet $\sigma$ of~$K$ and an ordering $v_0, \dots, v_d$ of its
	vertices such that the set $\{c_0(v_0), \dots, c_d(v_d)\}$ consists of all $d+1$ colors.
\end{thm}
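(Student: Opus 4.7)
The plan is to deduce Theorem~\ref{thm:colored-sperner} from Gale's colorful KKM Theorem~\ref{thm:colorful-KKM} via a standard closed-star construction, converting the combinatorial rainbow question into a topological intersection and then decoding it.

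For each $i$ and $j$ in $\{0, \dots, d\}$, I would set
$$C^i_j \;:=\; \bigcup_{v \in V,\ c_i(v) = j} \overline{\mathrm{st}}_K(v) \;\subseteq\; |K| = \Delta_d,$$
the closed star in $K$ of the preimage $c_i^{-1}(j)$. If $F \subseteq \Delta_d$ is the face with vertex set $I$, then a point $x \in F$ has its carrier simplex in $K$ contained entirely in $F$, and the Sperner condition on $c_i$ labels each vertex of that simplex by some element of $I$; hence $x$ lies in the closed star of one such vertex, forcing $x \in C^i_j$ for some $j \in I$. Thus $\{C^i_0, \dots, C^i_d\}$ is a KKM cover of $\Delta_d$ for every $i$. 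Applying Theorem~\ref{thm:colorful-KKM} produces a permutation $\pi$ of $\{0, \dots, d\}$ and a point $x \in \bigcap_i C^i_{\pi(i)}$; choosing any facet $\sigma$ of $K$ with $x$ in its closure, the membership $x \in C^i_{\pi(i)}$ supplies a vertex $u_i \in V(\sigma)$ with $c_i(u_i) = \pi(i)$.

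The remaining task, and the main obstacle, is to turn the witnesses $u_0, \dots, u_d$---which need not be distinct---into an honest ordering $v_0, \dots, v_d$ of the vertices of $\sigma$ with $c_i(v_i) = \pi(i)$, i.e., to produce a system of distinct representatives for the sets $\{v \in V(\sigma) : c_i(v) = \pi(i)\}$. I would address this by sharpening the cover: replace $C^i_j$ by
$$\widetilde{C}^i_j \;:=\; \overline{\left\{\, x \in |K| \;:\; \sum_{v:\ c_i(v) = j} \lambda_v(x) \;\geq\; \sum_{v:\ c_i(v) = \ell} \lambda_v(x) \text{ for all } \ell \,\right\}},$$
where $\lambda_v(x)$ is the barycentric coordinate of $x$ in its carrier simplex. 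These refined sets remain KKM covers by the same Sperner argument, and the simultaneous maximizations at a point in $\bigcap_i \widetilde{C}^i_{\pi(i)}$ should yield Hall's marriage condition on the bipartite graph with edges $\{(i, v) : c_i(v) = \pi(i),\ \lambda_v(x) > 0\}$. A perfect matching there supplies the required ordering, and $(c_0(v_0), \dots, c_d(v_d)) = (\pi(0), \dots, \pi(d))$ is a permutation of the color set.

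Should the Hall verification prove stubborn in the refined setup, the natural fallback is Bapat's original path-following argument: thread a pivot path through almost-colorful facets and show the path must terminate at a colorful facet by a parity count mimicking the classical door-to-door proof of Sperner's lemma. Either way, the conceptual content is the same---use the KKM machinery to locate a facet that is colorful up to the choice of representatives, and then extract the matching combinatorially.
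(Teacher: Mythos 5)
The paper does not give a proof of this result; it is attributed to Bapat~\cite{bapat1989} and used as a black box, so there is no paper argument to compare against. Your proposal via Gale's colorful KKM theorem has a genuine gap at the system-of-distinct-representatives step. Here is a concrete local obstruction. Take $d=2$ and a facet of $K$ interior to $\Delta_2$ with vertices $v_1, v_2, v_3$, colored $c_0(v_1,v_2,v_3) = (0,1,2)$, $c_1(v_1,v_2,v_3) = (1,0,2)$, $c_2(v_1,v_2,v_3) = (1,0,2)$. At the barycenter $x$ (weights $1/3,1/3,1/3$), every color has weight $1/3$ under every $c_i$, so $x$ lies in $\widetilde{C}^0_0 \cap \widetilde{C}^1_1 \cap \widetilde{C}^2_2$ with $\pi = \mathrm{id}$. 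But $\{v : c_0(v)=0\} = \{v : c_1(v) = 1\} = \{v_1\}$, so for $S = \{0,1\}$ the neighborhood $N(S)$ is the single vertex $v_1$ and Hall's condition fails. A rainbow ordering of this facet does exist, namely $v_3, v_1, v_2$ with colors $(2,1,0)$, but it corresponds to the permutation $\pi' = (0\,2)$, not the $\pi = \mathrm{id}$ you extracted; the colorful KKM theorem asserts only that \emph{some} permutation has nonempty intersection and gives you no control over which one, so you cannot conclude that the bipartite graph determined by your particular $\pi$ admits a perfect matching.

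There is also a smaller issue with the unrefined closed-star cover: $x \in \overline{\mathrm{st}}_K(u_i)$ only places $u_i$ in the closed star of the carrier of $x$, so the witnesses $u_0, \dots, u_d$ need not lie in a common facet at all. The barycentric refinement repairs this (vertices with $\lambda_v(x) > 0$ all lie in the carrier) but then meets the Hall failure above. The fallback you mention --- a pivoting path-following argument on pairs (facet, ordering of its vertices) paralleling the classical door-to-door proof of Sperner's lemma --- is indeed how Bapat establishes the result, but as written your proposal defers that essential combinatorial step rather than carrying it out.
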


In fact, Bapat shows that there are at least $(d+1)!$ pairs of facets and orderings of its vertices that exhibit all colors across the
$d+1$ Sperner colorings. We prove a quantitative extension of (a weaker version of) Theorem~\ref{thm:colored-sperner} to pseudomanifolds; see 
Theorem~\ref{thm:colored-sperner-pseudomfld}.

\begin{proof}[Proof of Theorem~\ref{thm:colorful-KKM}]
	We will show that for every $\varepsilon > 0$ there exists a permutation $\pi$ and a point $x \in \Delta_d$
	such that $x$ has distance at most $\varepsilon$ from every~$C^j_{\pi(j)}$. This implies the colorful KKM
	by letting $\varepsilon$ tend to zero since the simplex is compact and there are only finitely many permutations
	of the set~$\{0, \dots, d\}$.
	
	Let $K$ be a triangulation of~$\Delta_d$, where each face has diameter at most~$\varepsilon$. Denote by
	$K'$ the barycentric subdivision of~$K$. Color vertex $v$ of~$K'$ by color~$i$ if $v$ subdivides a face of
	dimension~$j$ in~$K'$ and~${v \in C_i^j}$. If for some vertex~$v$ this leaves multiple colors to choose from,
	decide for a color~$i$ arbitrarily provided that this choice does not violate the Sperner coloring condition.
	This is always possible. The existence of a facet exhibiting all colors guaranteed by Sperner's lemma now implies 
	our claim.
\end{proof}

Here we extend Gale's arguments to prove a stronger version of the colorful KKM theorem and apply it to fair cake division.

\begin{thm}[Strong colorful KKM theorem]
\label{thm:strong-colorful-KKM}
	Given $d$ KKM coverings $\{C^1_i\}_{i = 0, \dots, d}, \dots, \{C^d_i\}_{i = 0, \dots, d}$ of the $d$-simplex, 
	there is an $x \in \Delta_d$ and there are $d+1$ bijections $\pi_i\colon \{1,\dots,d\}\longrightarrow \{0,\dots,d\}\setminus\{i\}$, 
	$i \in \{0, \dots, d\}$, such that for all~$i$ we have $x \in C_{\pi_i(1)}^1\cap \dots\cap C_{\pi_i(d)}^d$.
\end{thm}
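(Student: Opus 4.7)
The plan is to mimic Gale's proof of Theorem~\ref{thm:colorful-KKM}, but with an augmented partition-of-unity matrix whose extra row is the barycentric map and where the Birkhoff--von Neumann decomposition is used to extract $d+1$ simultaneous bijections rather than the single permutation of the classical argument.

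First, fix continuous partitions of unity $\{\phi^k_j\}_{j=0}^d$ subordinate to each KKM covering $\{C^k_j\}$ for $k = 1, \dots, d$, so that $\phi^k_j \geq 0$, $\sum_j \phi^k_j \equiv 1$, and $\phi^k_j$ vanishes off $C^k_j$. Let $\Phi(x) = (\phi^k_j(x))$ be the $d \times (d+1)$ row-stochastic matrix at each $x \in \Delta_d$. The conclusion is equivalent to the existence of $x \in \Delta_d$ for which every $d \times d$ submatrix of $\Phi(x)$ obtained by deleting one column has positive permanent; equivalently, the bipartite support graph $G(x)$ on $\{1, \dots, d\} \sqcup \{0, \dots, d\}$ (with $k \sim j$ iff $\phi^k_j(x) > 0$) admits a perfect matching from $\{1, \dots, d\}$ onto every $d$-subset of $\{0, \dots, d\}$. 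By K\"onig--Hall, this is the ``strong Hall'' condition $|N_{G(x)}(S)| \geq |S| + 1$ for every $\emptyset \neq S \subsetneq \{1, \dots, d\}$, together with $N_{G(x)}(\{1, \dots, d\}) = \{0, \dots, d\}$.

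Next, augment $\Phi(x)$ to a $(d+1) \times (d+1)$ matrix $\widetilde \Phi(x)$ by adjoining a zeroth row $\widetilde \Phi(x)_{0,j} = x_j$, which is itself a partition of unity subordinate to the canonical KKM cover of $\Delta_d$ by closed stars of its vertices. Apply Brouwer's fixed point theorem to the continuous self-map $T(x)_j = (d+1)^{-1}(x_j + \sum_{k=1}^d \phi^k_j(x))$ to obtain a fixed point $x^*$ with $\sum_{k=1}^d \phi^k_j(x^*) = d\, x^*_j$ for every $j$, so that the row sums of $\widetilde \Phi(x^*)$ all equal $1$ and the column sums equal $(d+1) x^*_j$. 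To handle boundary behavior, first perturb $\phi^k_j \leadsto (1-\varepsilon)\phi^k_j + \varepsilon x_j$, run the argument for each $\varepsilon > 0$ (producing a fixed point in $\mathrm{int}\,\Delta_d$), then extract a convergent subsequence as $\varepsilon \to 0$, using compactness of $\Delta_d$ and of the finite set of possible bijection-tuples.

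Diagonal-rescale $\widetilde \Phi(x^*)$ to a doubly stochastic matrix $D$ via Sinkhorn iteration; since the barycentric row $\widetilde \Phi(x^*)_{0, \cdot}$ has strictly positive entries on $\mathrm{int}\,\Delta_d$, the iteration preserves the positivity of row $0$. By Birkhoff--von Neumann, $D = \sum_\sigma \lambda_\sigma P_\sigma$ for coefficients $\lambda_\sigma \geq 0$ summing to $1$. Positivity of $D_{0, i}$ for every $i \in \{0, \dots, d\}$ forces that for each $i$ some $\sigma$ in the support of $\lambda$ satisfies $\sigma(0) = i$; setting $\pi_i := \sigma|_{\{1, \dots, d\}}$ yields a bijection onto $\{0, \dots, d\} \setminus \{i\}$. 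Entrywise dominance $D \geq \lambda_\sigma P_\sigma$ then gives $D_{k, \pi_i(k)} > 0$, so $\phi^k_{\pi_i(k)}(x^*) > 0$ and $x^* \in C^k_{\pi_i(k)}$ for every $k$, which is the required family of $d+1$ bijections.

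The main obstacle is the extraction of a bijection for \emph{every} missing index $i$ rather than the single permutation of the classical Gale argument; the key device is the barycentric adjunction, whose strictly positive row forces the Birkhoff decomposition of $D$ to hit every column $i$ from row $0$. The technical subtlety to be checked is that the Sinkhorn scaling does not collapse the support of the critical entries, which is handled by the perturbation-and-limit step above and the observation that the fixed-point relation $\sum_{k=1}^d \phi^k_j(x^*) = d\, x^*_j$ ensures the requisite ``total support'' of $\widetilde \Phi(x^*)$ at accumulation points.
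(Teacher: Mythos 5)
Your overall strategy---augment the partition-of-unity matrix so that a Birkhoff--von Neumann decomposition of a doubly stochastic version certifies a bijection for \emph{every} missing index $i$ at once---is the same skeleton as the paper's proof, but the way you produce a doubly stochastic matrix has a genuine gap. You augment with the barycentric row $(x_0,\dots,x_d)$ and take $x^*$ to be a Brouwer fixed point of $T$, which yields only $\sum_k\phi^k_j(x^*)=d\,x^*_j$; the column sums are then $(d+1)x^*_j$, not $1$, so you invoke Sinkhorn. But the fixed point need not be interior: since each $\phi^k_j$ vanishes on the facet $\{x_j=0\}$, that facet is $T$-invariant, and a boundary fixed point makes some column of $\widetilde\Phi(x^*)$ identically zero, so no diagonal rescaling to a doubly stochastic matrix exists. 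Your proposed perturbation $\phi^k_j\leadsto(1-\varepsilon)\phi^k_j+\varepsilon x_j$ does not repair this: substituting into $T_\varepsilon(x^*)=x^*$ and cancelling the common factor $(1-\varepsilon)$ reproduces the \emph{identical} relation $\sum_k\phi^k_j(x^*)=d\,x^*_j$, so the same boundary fixed points survive and interiority is not forced. A perturbation that does force interiority (e.g.\ adding $\varepsilon/(d+1)$) destroys the subordination $\mathrm{supp}\,\widetilde\phi^k_j\subseteq C^k_j$, so the positivity $\widetilde\phi^k_{\pi_i(k)}(x^*_\varepsilon)>0$ extracted from the Birkhoff step no longer certifies membership in $C^k_{\pi_i(k)}$, and the $\varepsilon\to 0$ limit gives nothing. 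The asserted ``total support'' of $\widetilde\Phi(x^*)$ is exactly what fails and is not a consequence of the fixed-point relation.

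The paper avoids all of this by choosing a \emph{different} point $x^*$ and a \emph{different} augmentation: since each $f_j$ (the normalized-distance analogue of your $\phi^j$) maps every face of $\partial\Delta_d$ into itself, the average $f=\frac1d\sum_j f_j$ restricted to $\partial\Delta_d$ has degree one, hence $f$ is onto, so one can pick $x_0$ with $f(x_0)=\frac{1}{d+1}(1,\dots,1)$; this forces $x_0$ into the interior automatically (a preimage of the barycenter cannot lie in any facet, since $f$ preserves facets), and the $(d+1)\times(d+1)$ matrix whose zeroth column is the constant $\frac{1}{d+1}$ and whose $j$-th column is $f_j(x_0)$ is then doubly stochastic on the nose, with no Sinkhorn step. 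The constant column plays the role of your barycentric row, but its unconditional positivity and the exact column/row sums are built in rather than hoped for. Replacing your Brouwer-plus-Sinkhorn step with this degree argument and constant-column augmentation closes the gap; the remainder of your Birkhoff argument then goes through verbatim.
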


\begin{proof}
Let $\{C^1_i\}_{i = 0, \dots, d}, \dots, \{C^d_i\}_{i = 0, \dots, d}$ be open KKM covers. The case for closed KKM covers can be proved 
by considering the open sets $\{x\in \Delta_d \ | \ \dist(x,C_i^j)<\varepsilon\}$ and letting $\varepsilon$ tend to zero.
Without loss of generality, assume that $C_i^j$ is disjoint from the facet opposite the vertex~$i$.

Let
$$f_j\colon \Delta_d \longrightarrow \Delta_d, x \mapsto \frac{1}{\sum_k \dist(x, \Delta_d\setminus C_k^j)}(\dist(x, \Delta_d\setminus C_0^j), \dots, \dist(x, \Delta_d\setminus C_d^j))=(f_{j,0},\dots,f_{j,d})$$
and let $f\colon\Delta_d \longrightarrow \Delta_d$ be defined as $f=\frac1d \sum_{i=1}^{d} f_i$. 
The $i$-th coordinate $f_{j,i}$ of $f_j$ satisfies $f_{j,i}(x) > 0$ for a given $x \in \Delta_d$ if and only if $x \in C_i^j$, since $C_i^j$ is open.
Notice that $\sum_k \dist(x, \Delta_d\setminus C_k^j) > 0$ for every~$x \in \Delta_d$, since $\bigcup_k C_k^j = \Delta_d$.
Since $f(\sigma)\subseteq \sigma$ for any face $\sigma$ of~$\Delta_d$ the map~$f$ has degree one on the boundary~$\partial\Delta_d$. 
Thus there exists some $x_0$ such that $f(x_0)=\frac{1}{d+1} (1,\dots,1)$.

Now consider the $(d+1)\times (d+1)$ matrix 
$$M=
\begin{bmatrix}
    \frac{1}{d+1} & f_{1,0}(x_0)     & \dots & f_{d,0}(x_0)\\
    \frac{1}{d+1} & f_{1,1}(x_0)     & \dots & f_{d,1}(x_0)\\
    \hdotsfor{4}  \\
    \frac{1}{d+1} & f_{1,d}(x_0)     & \dots & f_{d,d}(x_0)
\end{bmatrix}.$$ 
Each row and each column of~$M$ sums to one; $M$ is doubly stochastic.
By Birkhoff's theorem $M$ is a convex combination of permutation matrices. Since all the entries are nonzero 
in the first column, for each~$i$ there is always a permutation $\pi_i\colon\{0,\dots,d\}\longrightarrow\{0,\dots,d\}$ 
such that $\pi_i(0)=i$ and $f_{j,\pi_i(j)}\ne 0$ for $j=1,\dots,d$. By restricting the domain of $\pi_i$ to $\{1,\dots,d\}$, 
we have a bijection from $\{1,\dots,d\}$ to $\{0,\dots,d\}\setminus\{i\}$ and $f_{j,\pi_i(j)}(x_0)\ne 0$ for $j=1,\dots,d$, 
which implies $x_0 \in C_{\pi_i(1)}^1\cap\dots\cap C_{\pi_i(d)}^d$.	
\end{proof}

By applying Theorem~\ref{thm:strong-colorful-KKM} to the preference sets of $d+1$ people, there is a way to cut the cake such that regardless of which slice the first person chooses, the remaining slices can still be distributed in a way where everyone is satisfied with the division.
Thus, this is a fair cake division where the preference sets of one person are hidden.

\begin{cor}
\label{cor:secret-pref}
	Envy-free cake divisions exist for any number of people even if the preferences of one person are secret.
\end{cor}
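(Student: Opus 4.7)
The plan is to invoke Theorem~\ref{thm:strong-colorful-KKM} directly, exploiting the asymmetry built into its statement: only $d$ KKM covers are hypothesized, yet a common point is produced together with a compatible assignment for \emph{every} choice of the ``left-out'' slice. This is precisely what secrecy of one person's preferences requires.

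First, I would label the $d+1$ people as $0, 1, \ldots, d$, designating person $0$ as the one whose preferences are secret. For each $j \in \{1, \ldots, d\}$, let
\[
C_i^j = \{x \in \Delta_d \mid \text{person } j \text{ weakly prefers slice } i \text{ in the division } x\}.
\]
As in the proof of Corollary~\ref{cor:div}, the closedness of preferences and the hungry assumption ensure that $\{C_i^j\}_{i=0,\ldots,d}$ is a closed KKM cover of $\Delta_d$ for each $j \in \{1, \ldots, d\}$.

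Next, I would apply Theorem~\ref{thm:strong-colorful-KKM} to these $d$ KKM covers to obtain a division $x_0 \in \Delta_d$ together with bijections $\pi_i \colon \{1, \ldots, d\} \to \{0, \ldots, d\} \setminus \{i\}$, one for each $i \in \{0, \ldots, d\}$, such that $x_0 \in C_{\pi_i(1)}^1 \cap \cdots \cap C_{\pi_i(d)}^d$. The envy-free protocol then reads itself off: cut the cake according to $x_0$, allow the secret person $0$ to pick any slice $i \in \{0, \ldots, d\}$, and award slice $\pi_i(j)$ to person $j$ for $j = 1, \ldots, d$. Because $\pi_i$ is a bijection onto $\{0, \ldots, d\} \setminus \{i\}$, every slice is distributed exactly once, and since $x_0 \in C_{\pi_i(j)}^j$, every remaining person receives a slice they weakly prefer; hence no one is envious of anyone else's piece, irrespective of person $0$'s choice.

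The main substance is already packaged into Theorem~\ref{thm:strong-colorful-KKM}, so no serious obstacle remains at this stage; the deduction is essentially bookkeeping. The one detail to mind is that the theorem is stated for both open and closed KKM covers (the closed case being handled via an $\varepsilon$-neighbourhood approximation in its proof), which is exactly what permits the preference sets here, assumed only to be closed, to be fed into the machinery.
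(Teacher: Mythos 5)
Your proposal is correct and takes essentially the same route as the paper: designate the $d$ people with known preferences, form their $d$ KKM covers, apply Theorem~\ref{thm:strong-colorful-KKM} to obtain a single division $x_0$ with compatible bijections $\pi_i$, and let the secret person's choice $i$ select which $\pi_i$ is used to distribute the remaining slices. You simply spell out the bookkeeping that the paper leaves implicit.
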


\begin{proof}
	For a cake division into $d+1$ parts, the $d$ known preference sets yield $d$ KKM covers of~$\Delta_d$. 
	Find a partition of the cake according to Theorem~\ref{thm:strong-colorful-KKM}. If the person with secret
	preferences chooses slice~$i$, permutation~$\pi_i$ distributes the remaining slices in an envy-free way.
\end{proof}

Corollary~\ref{cor:secret-pref} is the correct analog of the two-person case. Envy-free cake divisions for two 
people exist simply because the first person cuts the cake such that she would be satisfied with either piece,
and the second person chooses her favorite piece. Here the person cutting the cake fairly divides it without 
knowing the preferences of the other person. Corollary~\ref{cor:secret-pref} is an existence result that does 
not inform algorithmic aspects. For recent progress on these aspects see Aziz and Mackenzie~\cite{aziz2016}.

A problem dual to cake division is rental harmony, also treated by Su~\cite{su1999}, where instead of desirables (cake)
one strives to fairly distribute undesirables (rent). We normalize the rent of a $(d+1)$-bedroom apartment to~$1$, and thus
rent divisions among the $d+1$ rooms are parametrized by~$\Delta_d$. The $d+1$ tenants of the apartment try to find a
partition of the rent among the rooms such that tenants can be bijectively assigned to rooms in an envy-free way. Each 
tenant has a collection of $d+1$ preference sets $C_0, \dots, C_{d+1} \subseteq \Delta_d$, where $x \in C_j$ means 
that for rent division~$x$ the tenant (weakly) prefers room~$j$ over the other rooms. We assume that the sets $C_j$ are
closed, and that each tenant prefers a free room over a nonfree one. This leads to the notion of dual KKM cover.

A \emph{dual KKM cover} of the $d$-simplex $\Delta_d$ is a collection of closed subsets 
$C_0, \dots, C_d \subseteq \Delta_d$ such that for any proper subset $I \subseteq \{0, \ldots, d\}$, 
the face of $\Delta_d$ determined by $I$ is covered by $\bigcup_{i \in \{0, \ldots, d\} \setminus I} C_i$
and $\bigcup_{i=0}^d C_i = \Delta_d$. In the language of rent division this means that if we associate the
vertices of~$\Delta_d$ with the $d+1$ rooms then the set $C_i$ where a fixed roommate prefers room~$i$
entirely covers the facet opposite of vertex~$i$. This must be satisfied since the facet opposite of vertex~$i$
corresponds to rent divisions where room~$i$ is free.

Su~\cite{su1999} shows that envy-free rent divisions exist under the given conditions. Here we want to extend
this existence result in the same way as for cake divisions. Even if one tenant does not show up for the meeting
where the division of rent among rooms is decided, and thus her preferences are secret, we still can guarantee
the existence of an envy-free rent division.

\begin{thm}[Strong dual colorful KKM theorem]
\label{thm:strong-dual-colorful-KKM}
	Given $d$ dual KKM coverings $\{C^1_i\}_{i = 0, \dots, d}, \dots,$ $\{C^d_i\}_{i = 0, \dots, d}$ of the $d$-simplex, 
	there is an $x \in \Delta_d$ and there are $d+1$ bijections $\pi_i\colon \{1,\dots,d\}\longrightarrow \{0,\dots,d\}\setminus\{i\}$, 
	$i \in \{0, \dots, d\}$, such that for all~$i$ we have $x \in C_{\pi_i(1)}^1\cap \dots\cap C_{\pi_i(d)}^d$.
\end{thm}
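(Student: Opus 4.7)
I will emulate the proof of Theorem~\ref{thm:strong-colorful-KKM}, substituting a dual-adapted map throughout. First I reduce to open dual KKM covers by $\varepsilon$-thickening -- note that $F_i \subseteq C_i^j$ implies $F_i \subseteq \{x : \dist(x, C_i^j) < \varepsilon\}$, so dual KKM is preserved -- and let $\varepsilon \to 0$ at the end. The natural analog of $f_j$ is
\[
g_j(x) = \frac{1}{\sum_k \dist(x, C_k^j)}\bigl(\dist(x, C_0^j), \dots, \dist(x, C_d^j)\bigr),
\]
so that $g_{j,i}(x) = 0$ encodes $x \in C_i^j$. For $x$ in the face $\sigma_I$ spanned by $\{e_i : i \in I\}$ and any $i \in I^c$, the dual KKM condition $F_i \subseteq C_i^j$ (where $F_i$ is the facet opposite $e_i$) forces $x \in C_i^j$ and hence $g_{j,i}(x) = 0$; therefore $g_j(\sigma_I) \subseteq \sigma_I$, and the average $g = \tfrac{1}{d}\sum_{j=1}^d g_j$ inherits this face-preservation. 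Consequently $g|_{\partial \Delta_d}$ is homotopic to the identity, has degree one, and there exists $x_0 \in \Delta_d$ with $g(x_0) = \tfrac{1}{d+1}\mathbf{1}$.

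At $x_0$ I assemble the $(d+1)\times(d+1)$ matrix $M$ whose zeroth column is $(\tfrac{1}{d+1}, \dots, \tfrac{1}{d+1})^T$ and whose $(i,j)$-entry for $j \geq 1$ is $g_{j,i}(x_0)$. Using $g(x_0) = \tfrac{1}{d+1}\mathbf{1}$, each row sum and each column sum equals one, so $M$ is doubly stochastic. By Birkhoff's theorem, $M$ is a convex combination of permutation matrices, and since every entry of the zeroth column is $\tfrac{1}{d+1} > 0$, for each $i \in \{0,\dots,d\}$ some permutation $\sigma_i$ in the decomposition satisfies $\sigma_i(0) = i$. Restricting to $\{1,\dots,d\}$ yields the required bijections $\pi_i \colon \{1,\dots,d\} \to \{0,\dots,d\}\setminus\{i\}$.

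The main obstacle, distinguishing the dual setting from Theorem~\ref{thm:strong-colorful-KKM}, is the following. In the KKM case Birkhoff directly certifies $f_{j,\pi_i(j)}(x_0) > 0$, which immediately gives $x_0 \in C_{\pi_i(j)}^j$. Here Birkhoff gives $g_{j,\pi_i(j)}(x_0) > 0$, which means $x_0 \notin C_{\pi_i(j)}^j$ -- the \emph{opposite} of what is required. To recover the correct conclusion I would appeal to Hall's marriage theorem: the desired bijections exist exactly when for every $i$ and every $T \subseteq \{1,\dots,d\}$, the set $\{k \neq i : x_0 \in C_k^j \text{ for some } j \in T\}$ has cardinality at least $|T|$; equivalently, $\bigl|\bigcup_{j\in T}\{k : x_0 \in C_k^j\}\bigr| \geq |T| + 1$. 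This expansion property should follow from the column-sum identity $\sum_{j=1}^d g_{j,i}(x_0) = d/(d+1)$ combined with the fact that each row of the $g$-block has at least one zero entry (guaranteed by $\bigcup_i C_i^j = \Delta_d$). Verifying Hall's condition simultaneously for every $i$ -- potentially by refining the choice of $x_0$ beyond the single barycentric preimage -- is the central step of the argument.
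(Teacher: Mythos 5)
Your proposal does not recover the theorem; the gap you flag at the end is real, and the Hall's-marriage escape route is not a proof but an unresolved hope. The paper's actual argument is much shorter and avoids the obstacle entirely.

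The source of the trouble is your choice to switch from the primal map
\[
f_j(x) = \frac{1}{\sum_k \dist(x, \Delta_d\setminus C_k^j)}\bigl(\dist(x, \Delta_d\setminus C_0^j), \dots, \dist(x, \Delta_d\setminus C_d^j)\bigr)
\]
to the ``dual-adapted'' $g_j$ built from $\dist(x, C_k^j)$. Two things go wrong. First, $g_j$ need not be well defined: a dual KKM cover has $\bigcap_k C_k^j \ne \emptyset$ (this is the dual KKM theorem), and at such a point --- or at any point of $\bigcap_k \overline{C_k^j}$ once you pass to the open $\varepsilon$-thickening --- the denominator $\sum_k \dist(x, C_k^j)$ vanishes. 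Second, and more fundamentally, as you yourself observe, Birkhoff applied to the $g$-matrix certifies $g_{j,\pi_i(j)}(x_0) > 0$, which translates to $x_0 \notin C_{\pi_i(j)}^j$ --- precisely the negation of what is wanted. The expansion estimate you sketch for Hall's condition does not follow from the column-sum identity alone: knowing that $\sum_{j=1}^d g_{j,i}(x_0) = d/(d+1)$ says nothing about how the zeros of the $g$-block are distributed across rows and columns, and one can easily write down doubly stochastic matrices whose zero-pattern fails the Hall condition you need.

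The correct move, and the one the paper makes, is to keep the map $f_j$ from the primal proof unchanged. The dual KKM condition (after the harmless normalization that $C_i^j \cap \partial\Delta_d$ is contained in the facet opposite vertex $i$) then forces $f_{j,i}(x) = 0$ whenever $x$ lies in a proper face containing vertex $i$; hence $f_j$, and therefore the average $f = \frac{1}{d}\sum_j f_j$, maps each boundary face $\sigma_I$ into the \emph{opposite} face $\sigma_{I^c}$ rather than into $\sigma_I$ itself. A self-map of $\partial\Delta_d$ with this property is homotopic to the ``antipodal'' face-swapping map and so has nonvanishing degree, which is all that is needed to conclude that $f$ is onto and that a preimage $x_0$ of the barycenter exists. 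From there the Birkhoff step is verbatim the same as in Theorem~\ref{thm:strong-colorful-KKM}, and --- crucially --- it now yields $f_{j,\pi_i(j)}(x_0) > 0$, i.e.\ $x_0 \in C_{\pi_i(j)}^j$, which is the desired direction. So the only thing that changes between the primal and dual proofs is the degree computation on the boundary, not the map.
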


The same proof as for Theorem~\ref{thm:strong-colorful-KKM} works only that now $f$ maps each boundary face $\sigma$
into the opposite face, and thus $f$ is homotopic to the antipodal map on the boundary. In particular, $f$ has nonvanishing degree
on the boundary. As for cake division we now obtain the following corollary:

\begin{cor}
\label{cor:dual-secret-pref}
	Envy-free rent divisions exist for any number of people even if the preferences of one person are secret.
\end{cor}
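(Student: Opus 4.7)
The plan is to follow the pattern of Corollary~\ref{cor:secret-pref}, substituting dual KKM covers for KKM covers so as to capture the rent-division condition in which tenants prefer free rooms. First I would model the space of rent divisions of a $(d+1)$-bedroom apartment with total rent normalized to~$1$ by the simplex $\Delta_d = \{x \in \R^{d+1}_{\ge 0} : \sum_i x_i = 1\}$, where the coordinate $x_i$ is the rent assigned to room~$i$. For each of the $d$ tenants whose preferences are known, indexed $j \in \{1, \dots, d\}$, let $C^j_i \subseteq \Delta_d$ denote the closed set of rent divisions under which tenant~$j$ weakly prefers room~$i$.

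Next, I would verify that each family $\{C^j_i\}_{i = 0, \dots, d}$ is a dual KKM cover. Given a proper subset $I \subsetneq \{0, \dots, d\}$, the face of~$\Delta_d$ spanned by $\{e_i\}_{i \in I}$ consists of those rent divisions in which every room with index outside $I$ has zero rent, i.e.\ is free. Since tenant~$j$ prefers a free room to a non-free one, on this face one of tenant~$j$'s top choices must be a free room, so the face is contained in $\bigcup_{i \notin I} C^j_i$. The covering condition $\bigcup_i C^j_i = \Delta_d$ is automatic because each tenant always has at least one most-preferred room under any rent division.

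Applying Theorem~\ref{thm:strong-dual-colorful-KKM} to these $d$ dual KKM covers yields a rent division $x^* \in \Delta_d$ together with, for every room index $i \in \{0, \dots, d\}$, a bijection $\pi_i \colon \{1, \dots, d\} \to \{0, \dots, d\} \setminus \{i\}$ satisfying $x^* \in C^1_{\pi_i(1)} \cap \cdots \cap C^d_{\pi_i(d)}$. When the secret tenant (who was absent from the meeting) picks any room~$i$ under this rent division, we assign tenant~$j$ to room~$\pi_i(j)$ for $j = 1, \dots, d$. Since $\pi_i$ is a bijection onto the remaining rooms and $x^* \in C^j_{\pi_i(j)}$ for each~$j$, every non-secret tenant ends up in a weakly preferred room; the secret tenant is satisfied by her own choice. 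This is the desired envy-free rent division.

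The only real obstacle is the appeal to Theorem~\ref{thm:strong-dual-colorful-KKM}, whose proof mirrors that of Theorem~\ref{thm:strong-colorful-KKM} except that the maps~$f_j$ built from the distances to the complements $\Delta_d \setminus C^j_i$ now send each boundary face of~$\Delta_d$ into the \emph{opposite} face (rather than into itself), so the averaged map~$f$ is homotopic to the antipodal map on~$\partial\Delta_d$ and still has nonvanishing degree there. Once that ingredient is granted, the corollary is essentially an unpacking of the conclusion: one only has to confirm that the hypothesis ``each tenant prefers a free room over a non-free one'' produces the dual KKM structure required as input, which is immediate from the definition.
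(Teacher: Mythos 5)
Your proposal is correct and follows exactly the route the paper intends: model the rent divisions by $\Delta_d$, check that the preference sets of the $d$ present tenants form dual KKM covers, apply Theorem~\ref{thm:strong-dual-colorful-KKM}, and let $\pi_i$ handle the case where the absent tenant selects room~$i$. The paper itself only says ``as for cake division,'' so you have merely filled in the parallel to Corollary~\ref{cor:secret-pref}.

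One small point of precision in your verification of the dual KKM condition: you argue that on the face $F_I$ (where rooms outside $I$ are free) ``one of tenant~$j$'s top choices must be a free room, so the face is contained in $\bigcup_{i \notin I} C^j_i$.'' On the boundary of $F_I$, rooms with index \emph{inside} $I$ can also be free, so a top choice being free does not by itself place it outside~$I$. Taking $I$ of size~$d$ shows the issue most clearly: the facet $\{x_i = 0\}$ must be covered by $C^j_i$ alone, even at points where several rooms are free. What one really uses, and what the paper asserts, is the slightly stronger (and standard) reading that a miserly tenant \emph{weakly prefers every free room}; then $F_I \subseteq C^j_i$ for every $i \notin I$ and the dual KKM condition holds outright. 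With that rewording the argument is airtight.
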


% --------------------------------------------------------------------------------------------------------------%
 
\section{Quantitative Sperner and KKM-type results}
\label{sec:quantitative}

In this section we prove quantitative generalizations of Sperner's lemma and the KKM theorem. That means
that we replace the simplex by an arbitrary polytope or pseudomanifold and establish Sperner-type results
with a lower bound for the number of multicolored facets. As an extension of the KKM theorem we get many $(d+1)$-fold
intersections for certain generalized KKM coverings of $d$-pseudomanifolds. Atanassov~\cite{atanassov1996} conjectured a polytopal
extension of Sperner's lemma, which was proven by De Loera, Peterson, and Su~\cite{deLoera2002}. To state
their result we first need some definitions.

Let $P$ be a $d$-polytope, and let $K$ be a triangulation of $P$, that is, $K$ is topologically a $d$-ball and $\partial K$
is a subdivision of~$\partial P$. A map $c\colon V(K) \longrightarrow V(P)$ from the vertex set of~$K$ to the vertex 
set of~$P$ is called \emph{Sperner coloring} if whenever vertex $v$ of~$K$ is contained in the face $\sigma$ of~$P$
then $c(v)$ is a vertex of~$\sigma$. In this case we refer to $K$ as \emph{Sperner colored}. We think of the vertices of~$P$
as colored by pairwise distinct colors, and the vertices of~$K$ colored by the same color as some vertex of the minimal face of~$P$ it subdivides. 
A facet $\sigma$ of~$K$ (that is, a maximal face) such that $c$ is injective on the vertex set of~$\sigma$ is called \emph{rainbow facet}.

\begin{thm}[De Loera, Peterson, Su~\cite{deLoera2002}]
\label{thm:polytopal-sperner}
	Let $P$ be a $d$-polytope with $n$ vertices, and let $K$ be a Sperner colored 
	triangulation of $P$. Then $K$ has at least $n-d$ rainbow facets.
\end{thm}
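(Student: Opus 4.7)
The plan is to combine a piecewise linear degree argument with the construction of a combinatorial pebble set inside $P$. First, I would define the color map $f\colon K \to P$ by setting $f(v) = c(v) \in V(P) \subseteq P$ on each vertex of $K$ and extending affinely on each simplex of $K$. The Sperner condition ensures that $f$ carries each face of $\partial P$ into itself, so $f|_{\partial K}$ is homotopic to the identity through self-maps of $\partial P$, and hence $f$ has topological degree one as a self-map of the $d$-ball $P$.

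The crucial observation is that any non-rainbow $d$-simplex $\sigma$ of $K$ has $f(\sigma)$ of dimension at most $d-1$, since the coloring repeats a value on the vertices of $\sigma$; its image therefore has zero $d$-dimensional Lebesgue measure in $P$. A rainbow $d$-simplex, on the other hand, has image equal to the full $d$-dimensional simplex $\Delta_T$ spanned by its color set $T \subseteq V(P)$. Consequently, for a generic point $p$ in the interior of $P$, every $f$-preimage of $p$ lies inside some rainbow simplex, and the degree-one condition guarantees that at least one such rainbow simplex exists for every such~$p$.

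To upgrade this existence statement into a count of at least $n-d$ rainbow simplices, I would introduce a \emph{pebble set}: a finite set $S \subseteq \mathrm{int}(P)$ such that for every $(d+1)$-subset $T \subseteq V(P)$ the geometric simplex $\Delta_T$ contains at most one point of $S$. Given such an $S$, assign to each $p \in S$ a rainbow simplex $\sigma_p$ of $K$ whose image contains $p$. If $\sigma_{p_1} = \sigma_{p_2}$ for two distinct pebbles, then the simplex $\Delta_T$ with $T = c(V(\sigma_{p_1}))$ would contain both pebbles, contradicting the defining property of $S$. Hence the rainbow simplices $\sigma_p$ are pairwise distinct and the number of rainbow simplices is at least~$|S|$.

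The main obstacle, and the combinatorial heart of the argument, is then to produce a pebble set of size $n-d$. I would proceed by induction on $n$. The base case $n = d+1$ is immediate, with any interior point of the simplex serving as a pebble. For the inductive step, I would pick a vertex $v$ of $P$ whose removal leaves $P' = \mathrm{conv}(V(P) \setminus \{v\})$ a full-dimensional $d$-polytope on $n-1$ vertices, apply the inductive hypothesis to obtain a pebble set $S' \subseteq \mathrm{int}(P')$ of size $n-1-d$, and then adjoin one new pebble $p$ placed deep inside the sliver $P \setminus P'$ very close to $v$. With care $p$ can be chosen to lie in no simplex $\Delta_T$ with $v \notin T$, and no simplex $\Delta_T$ with $v \in T$ can contain $p$ together with a pebble from $S'$. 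The technical difficulty lies in verifying that such a vertex $v$ and such a placement always exist; this relies on convexity of $P$ and a careful general-position perturbation argument, and for non-simplicial $P$ one first triangulates the non-simplicial faces of $\partial P$ by pulling a vertex, which preserves both $n$ and the Sperner condition and so reduces to the simplicial case.
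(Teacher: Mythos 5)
Your proposal reconstructs, in outline, the \emph{original} proof of De~Loera, Peterson, and Su: a piecewise-linear degree-one argument to show that rainbow facets cover $P$, followed by the construction of a pebble set to turn that coverage into a count. This is a valid route, but it is \emph{not} the route this paper takes. The paper does not reprove Theorem~\ref{thm:polytopal-sperner} at all; it cites it, and then gives a new, independent proof of a sharper statement (Theorem~\ref{thm:polytopal-sperner2}) for simplicial polytopes, based on two entirely elementary ingredients: Gr\"unbaum's observation (Theorem~\ref{thm:subdivision}) that the boundary of a simplicial $d$-polytope can be realized as a subdivision of $\partial\Delta_d$ in many ways (one for each facet $\sigma$, with $\sigma$ mapping onto a facet of the simplex), and a multiple-counting argument controlled by Lemma~\ref{lem:deg-d}, which caps at $d-1$ the number of boundary facets that can share a single rainbow facet. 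The paper's method avoids degree theory and pebble sets completely, yields the stronger bound $\frac{f_{d-1}(P)-2}{d-1}\ge n-d$ (equality forcing $B$ stacked), and—crucially for the rest of Section~\ref{sec:quantitative}—extends to pseudomanifolds with boundary, where no ambient geometry is available to carry out a pebble-set construction. Your approach, by contrast, is tied to the convex-geometric realization of $P$ in $\R^d$ and handles non-simplicial polytopes directly without the pulling step, but it gives only the $n-d$ bound and does not generalize beyond polytopes.

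One concrete gap in your sketch: in the inductive step for the pebble set, placing the new pebble $p$ ``deep inside the sliver $P\setminus P'$ very close to $v$'' does not, on its own, prevent a simplex $\Delta_T$ with $v\in T$ from containing both $p$ and some old pebble of $S'$. A simplex with apex $v$ reaches arbitrarily far into $\mathrm{int}(P')$, so proximity of $p$ to $v$ is irrelevant; the actual De~Loera--Peterson--Su construction is more intricate than vertex-deletion induction and requires a careful choice of pebble locations relative to the hyperplanes spanned by $d$-subsets of $V(P)$. As written, the ``with care $p$ can be chosen'' step is the entire content of the theorem and needs to be supplied. If you want to pursue this route, consult~\cite{deLoera2002} for the precise pebble-set lemma; if you want the paper's route, the combination of Theorem~\ref{thm:subdivision}, Sperner's Lemma~\ref{lem:sperner}, Lemma~\ref{lem:deg-d}, and Barnette's lower bound theorem is both shorter and stronger in the simplicial case.
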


For $P = \Delta_d$ this specializes to Sperner's lemma. This was improved by Meunier~\cite{meunier2006} 
to at least $n + \lceil\frac{\delta}{d}\rceil-d-1$ rainbow facets, where $\delta$
is the minimal degree of a vertex in~$P$. Actually, Meunier's bound holds more generally for certain pure polytopal complexes 
(not necessarily homeomorphic to $d$-balls) that are linearly embedded into~$\R^d$. Extensions of Sperner's lemma to PL manifolds
are due to Musin~\cite{musin2014, musin2015}. In particular, using the results of De Loera, Peterson, and Su on 
pebble sets~\cite{deLoera2002} Musin extended Theorem~\ref{thm:polytopal-sperner} to a quantitative Sperner's lemma
for PL manifolds with boundary PL homeomorphic to the boundary of a polytope~\cite[Cor.~3.4]{musin2015}. 

Our first goal is to give a new, entirely elementary, and simple proof of a quantitative Sperner's lemma for simplicial
polytopes. We will identify the boundary of a simplicial polytope as a subdivision of the boundary of a simplex in
several different ways. The classical Sperner's lemma for simplices and multiple-counting then already yields a
quantitative version of Sperner's lemma for simplicial polytopes. Instead of the number of vertices our lower bound 
will depend linearly on the number of facets. This lower bound turns out to be at least as strong as that of Theorem~\ref{thm:polytopal-sperner}
for simplicial $d$-polytopes without vertices of degree~$d$. We will show later that it is superfluous to exclude vertices of degree~$d$.

Gr\"unbaum~\cite{grunbaum1965} showed that the boundary of any polytope is a subdivision of~$\partial\Delta_d$.
Actually his proof also shows that any (simplicial) polytope is a subdivision of~$\partial\Delta_d$ in many different ways. 
We state this stronger version of Gr\"unbaum's theorem below and briefly outline the proof; for details refer to~\cite{grunbaum1965}.

\begin{thm}
\label{thm:subdivision}
	Let $P$ be a simplicial $d$-polytope and $\sigma \subseteq P$ a fixed facet. 
	Then $\partial P$ is a subdivision of~$\partial\Delta_d$ where $\sigma$ is a facet of~$\Delta_d$.
\end{thm}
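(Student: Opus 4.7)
The plan is to construct a PL homeomorphism $\phi\colon|\partial P|\to|\partial\Delta_d|$ that restricts to the identity on $\sigma$ and realizes $\partial P$ as a simplicial refinement of $\partial\Delta_d$. First I realize $P\subset\R^d$ so that $\sigma$ lies in the hyperplane $H=\{x_d=0\}$ and $P$ sits in the half-space $\{x_d\leq 0\}$. Then I pick a point $v_d$ on the positive side of $H$, sufficiently close to the interior of $\sigma$ that central projection from $v_d$ maps $\partial P\setminus\mathrm{int}(\sigma)$ PL-homeomorphically onto $\sigma$ (the standard Schlegel diagram condition). Since $\sigma$ is already a $(d-1)$-simplex by simpliciality of $P$, the convex hull $\Delta_d:=\mathrm{conv}(\sigma\cup\{v_d\})$ is a $d$-simplex with $\sigma$ as a facet, and $\partial\Delta_d\setminus\mathrm{int}(\sigma)$ is the star of $v_d$ in $\Delta_d$, which is the cone from $v_d$ over $\partial\sigma$ and splits into the $d$ facets $F_i=[v_0,\ldots,\widehat{v_i},\ldots,v_{d-1},v_d]$.

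With $\phi|_\sigma=\mathrm{id}$ taken care of, the remaining task is to produce a PL homeomorphism $\partial P\setminus\mathrm{int}(\sigma)\to\mathrm{star}(v_d,\Delta_d)$ that is the identity on the common boundary $\partial\sigma$ and carries each simplex of $\partial P\setminus\mathrm{int}(\sigma)$ into some single facet $F_i$. Combinatorially this amounts to choosing a vertex $w\in V(P)\setminus V(\sigma)$ to correspond to $v_d$ and partitioning the facets of $P$ distinct from $\sigma$ into $d$ groups whose unions form simplicial $(d-1)$-balls $C_0,\ldots,C_{d-1}$; each $C_i$ must avoid the vertex $v_i$, contain the vertex $w$, meet $\sigma$ exactly in the face $[v_0,\ldots,\widehat{v_i},\ldots,v_{d-1}]$ opposite $v_i$, and meet each other $C_j$ along a simplicial $(d-2)$-ball through $w$.

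The main obstacle is producing this combinatorial decomposition of $\partial P\setminus\mathrm{int}(\sigma)$. In the base case $d=2$ it is immediate: $\partial P\setminus\sigma$ is an edge path from $v_1$ to $v_0$, and any interior vertex $w$ of that path splits it into the two required halves. In general one proceeds by induction on $d$, applying the analogous decomposition to the link of $w$ in $\partial P$, which is a simplicial $(d-2)$-sphere, and then coning the resulting partition back from $w$ and gluing along the Schlegel image of the faces of $\sigma$. The freedom in the choice of $w$ and in the inductive decomposition is precisely the source of the "many different ways" alluded to in the preamble to the theorem. The full combinatorial bookkeeping is delicate and we defer the remaining details to~\cite{grunbaum1965}.
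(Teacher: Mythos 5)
Your approach is genuinely different from the paper's, and it has a gap at the crucial step. You reduce the problem to producing a combinatorial decomposition of $\partial P\setminus\mathrm{int}(\sigma)$ into $d$ simplicial $(d-1)$-balls $C_0,\dots,C_{d-1}$, all sharing a chosen vertex $w\notin\sigma$ that plays the role of the cone apex $v_d$. You then propose to build this by induction, decomposing $\lk(w)$ and ``coning the resulting partition back from $w$.'' But coning from $w$ only organizes the closed star of $w$; the faces of $\partial P\setminus\mathrm{int}(\sigma)$ that do \emph{not} contain $w$ (the antistar of $w$ minus $\sigma$) still have to be distributed among the $C_i$'s in a way that makes each $C_i$ a ball meeting $\sigma$ in exactly the face opposite $v_i$ and meeting each $C_j$ in a $(d-2)$-ball. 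Your ``gluing along the Schlegel image of the faces of $\sigma$'' does not say how to do this, and it is not clear it can be done compatibly with the inductive choice on $\lk(w)$. Moreover, the inductive hypothesis requires a distinguished facet of the $(d-2)$-sphere $\lk(w)$ to play the role of $\sigma$, but since $w\notin\sigma$ there is no canonical such facet, so the induction does not even get off the ground as stated.

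The paper avoids all of this by choosing the distinguished vertex $v$ to be a vertex \emph{of} $\sigma$ rather than a new apex outside it. Then $\lk(v)$ is the boundary of a simplicial $(d-1)$-polytope that, by induction, subdivides $\partial\Delta_{d-1}$ with $\tau=\sigma\setminus v$ a facet of $\Delta_{d-1}$. Coning this with $v$ subdivides the $d$ facets of $\Delta_d=v*\Delta_{d-1}$ through $v$, and the antistar of $v$ in $\partial P$ — a $(d-1)$-ball whose boundary is $\lk(v)$ — automatically subdivides the single remaining facet $\Delta_{d-1}$ opposite $v$, with no further decomposition required. This is exactly what sidesteps the partition-into-$d$-balls problem that your sketch leaves unresolved, and it makes $\sigma=v*\tau$ a facet of $\Delta_d$ by construction. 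If you want to salvage your Schlegel-diagram route, you would need to supply the missing extension of the cone decomposition to the antistar of $w$, which is substantial; the paper's choice of $v\in\sigma$ is the cleaner move.
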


\begin{proof}
We prove this by induction on~$d$. Let $v$ be an arbitrary vertex of~$\sigma$, and let $\tau$ be the face of~$\sigma$
that contains all vertices except for~$v$. The link of $v$, that is, the intersection of $\partial P$ with a hyperplane close to~$v$,
is the boundary of a simplicial $(d-1)$-polytope and thus a subdivision of~$\partial\Delta_{d-1}$ in such a way that $\tau$ is a 
face of~$\Delta_{d-1}$. The antistar of~$v$ in~$\partial P$, that is, all faces of $\partial P$ that do not contain~$v$, 
is homeomorphic to a $(d-1)$-cell with boundary the link of~$v$. We now obtain $\partial P$ as a subdivision of~$\partial\Delta_d$ 
by coning $\partial \Delta_{d-1}$ in the link of~$v$ with~$v$ and adding the antistar of~$v$.
\end{proof}

Multiple-counting and Sperner's lemma now imply a quantitative version of Sperner's lemma for simplicial polytopes:
Every Sperner colored triangulation of $P$ contains at least $\frac{f_{d-1}(P)}{d+1}$ rainbow facets, where $f_{d-1}(P)$
denotes the number of $(d-1)$-faces of~$P$. In order
to refine this bound further we need an additional lemma. Since our eventual goal is to prove quantitative Sperner-type results
for pseudomanifolds we will phrase and prove the lemma in this generality. 

A $d$-dimensional simplicial complex is called \emph{pure} if every face is contained in a $d$-dimensional face. These top-dimensional
faces are called \emph{facets}. The \emph{dual graph} of a pure $d$-dimensional simplicial complex $K$ has as vertex set the facets of 
$K$ and an edge for any two facets that share a common $(d-1)$-face. If the dual graph is connected $K$ is \emph{strongly connected}.
The \emph{link} of a face $\sigma \in K$ is the subcomplex $\lk(\sigma) = \{\tau \in K \: | \: \sigma \cup \tau \in K, \sigma \cap \tau = \emptyset\}$. 
A pure, $d$-dimensional, strongly connected simplicial complex $K$ where each $(d-1)$-face is contained in precisely two facets and where
all links of faces of codimension at least two are connected is called \emph{pseudomanifold}. We will refer to $d$-dimensional pseudomanifolds 
simply as $d$-pseudomanifolds. 
A \emph{$d$-pseudomanifold with boundary} is defined in the same way, where now each $(d-1)$-face is contained
in either one or two facets. 
Those $(d-1)$-faces contained in exactly one facet induce a subcomplex called the \emph{boundary}. The boundary
$\partial K$ of a pseudomanifold with boundary $K$ is precisely the topological boundary of the geometric realization of~$K$.
We only consider compact pseudomanifolds, that is, every pseudomanifold has only finitely many faces. 

For two faces $\sigma$ and $\tau$ of a simplicial complex we denote by $\tau * \sigma$ their \emph{join}, that is, the face
with vertices~$\tau \cup \sigma$.

\begin{lem}
\label{lem:deg-d}
	Let $K$ be a $d$-pseudomanifold, and let $\sigma_1, \dots, \sigma_{d+1}$ be pairwise distinct facets of $K$ that involve only
	$d+2$ vertices of~$K$. Then $K$ contains a vertex of degree $d+1$.
\end{lem}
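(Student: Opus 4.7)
The plan is to reduce the statement to a claim about the link $\lk(v)$ of a distinguished vertex, then identify that link with the boundary of a simplex. A quick counting step initiates the argument: each facet $\sigma_i$, being a $(d+1)$-subset of the $d+2$ available vertices, is determined by the single vertex it omits. The $d+1$ distinct $\sigma_i$ thus omit $d+1$ distinct vertices, leaving exactly one vertex---call it $v$---that lies in every $\sigma_i$. After relabelling the remaining vertices as $w_1,\dots,w_{d+1}$, we may assume $\sigma_i = \{v,w_1,\dots,w_{d+1}\}\setminus\{w_i\}$ for each $i$.

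Passing to $\lk(v)$, each $\sigma_i$ contributes the facet $\sigma_i\setminus\{v\} = \{w_1,\dots,w_{d+1}\}\setminus\{w_i\}$, and these are precisely the $d+1$ facets of the boundary $\partial\Delta$ of the $d$-simplex on $\{w_1,\dots,w_{d+1}\}$. So the closed $(d-1)$-pseudomanifold $\partial\Delta$ (topologically a $(d-1)$-sphere) sits inside $\lk(v)$ as a subcomplex, and I would prove equality by the standard argument that a closed $(d-1)$-pseudomanifold cannot properly contain another of the same dimension. Concretely, $\lk(v)$ is itself a $(d-1)$-pseudomanifold, so every one of its $(d-2)$-faces lies in exactly two of its facets; but each $(d-2)$-face of $\partial\Delta$ already sits in two facets of $\partial\Delta$, so no facet $\tau$ of $\lk(v)$ outside $\partial\Delta$ may share a $(d-2)$-face with $\partial\Delta$. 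Using strong connectivity of $\lk(v)$, any such $\tau$ would be connected to $\partial\Delta$ by a chain of $(d-2)$-face adjacencies, and the transition step along this chain would force a third facet incident to a single $(d-2)$-face, a contradiction. Hence $\lk(v)=\partial\Delta$, whose vertex set is $\{w_1,\dots,w_{d+1}\}$, so $v$ has exactly $d+1$ neighbors in $K$.

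The main technical obstacle is verifying that $\lk(v)$ genuinely is a pseudomanifold in the paper's sense. Purity and the condition that each $(d-2)$-face lies in exactly two facets transfer directly from the pseudomanifold structure of $K$, and the required connectivity of links of codim-$\ge 2$ faces inside $\lk(v)$ is inherited from the same property for the corresponding faces of $K$ containing $v$; strong connectivity of $\lk(v)$ needs a short additional argument. If that step becomes awkward, I would sidestep it by running the chain-of-adjacencies step inside $K$ itself using its strong connectivity, then restricting the relevant path to facets containing $v$ and projecting to $\lk(v)$ to extract the forbidden triple incidence at a $(d-2)$-face.
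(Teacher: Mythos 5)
Your identification of the distinguished vertex $v$ and of $\partial\Delta$ inside $\lk(v)$ matches the paper's, but your method for concluding $\lk(v)=\partial\Delta$ is genuinely different and has a gap. You route the argument through the claim that $\lk(v)$ is a $(d-1)$-pseudomanifold and in particular \emph{strongly} connected. The paper's definition only gives that links of faces of codimension $\ge 2$ are \emph{connected} (in the $1$-skeleton), which is weaker than strong connectivity, and the paper's proof is structured precisely to avoid needing the stronger condition. Strong connectivity of $\lk(v)$ is in fact true here, but it requires its own induction on codimension (one shows that links of all faces of codimension $\ge 2$ are strongly connected, starting from codimension $2$, where a connected graph with all degrees $2$ is a cycle, and at each step passing from a hypothetical bipartition $A\sqcup B$ of the facets of a link to a maximal $A$--$B$ shared face and then to its link), not the quick ``short additional argument'' you suggest. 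So this is a real gap, though a fillable one.

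Your fallback does not repair it. A dual-graph path in $K$ from $v*\tau$ (with $\tau$ a facet of $\lk(v)$ outside $\partial\Delta$) to some $\sigma_i$ must leave the star of $v$ before arriving: every $(d-1)$-face of the form $v*\rho$ with $\rho$ a $(d-2)$-face of $\partial\Delta$ already lies in the two facets $\sigma_j,\sigma_k$ corresponding to the two $\partial\Delta$-facets through $\rho$, so no facet of $K$ containing $v$ other than the $\sigma_j$ themselves can be adjacent to a $\sigma_i$ across such a face, and the only other $(d-1)$-face of a $\sigma_i$ is $\sigma_i\setminus\{v\}$, which lies outside the star of $v$. Hence ``restricting the path to facets containing $v$ and projecting'' leaves no adjacency from which to extract a triple incidence. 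The paper sidesteps all of this by using only ordinary connectivity of $\lk(v)$: it picks a \emph{maximal} face $\tau$ of $\partial\Delta$ with $\tau*w\in\lk(v)$ for some $w\notin\partial\Delta$, then argues by cases on $\dim\tau$: for $\dim\tau=d-2$ it contradicts the two-facets condition at $v*\tau$, and for $\dim\tau\le d-3$ it contradicts connectedness of $\lk_K(v*\tau)$. I would either prove strong connectivity of $\lk(v)$ honestly by the codimension induction, or adopt the paper's maximal-bridging-face argument. Also handle $d=1$ separately, where your $(d-2)$-face language degenerates; the statement is trivial there since every vertex of a $1$-pseudomanifold has degree $2$.
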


\begin{proof}
	Any $1$-pseudomanifold contains only vertices of degree two. We can thus assume that $d \ge 2$. Denote by $W$ the $d+2$
	vertices of $\bigcup_i \sigma_i$. Any $\sigma_i$ contains all but one vertex of $W$, and thus $\bigcap_i \sigma_i$ contains
	precisely one vertex~$v$. The $\sigma_i \setminus v$ are pairwise distinct $d$-element subsets of the $(d+1)$-element set
	$W \setminus v$. Since there are only $d+1$ subsets of size $d$ of $W \setminus v$, we have that the set $\{\sigma_i \setminus v \: | \:
	i = 1, \dots, d+1\}$ consists of all subsets of $W \setminus v$. Thus the link $\lk(v)$ contains~${\partial \Delta_d}$.
	We show that this implies that $\lk(v) \cong \partial \Delta_d$ and thus $v$ has degree~${d+1}$: otherwise, since 
	$\lk(v) \supsetneq \partial \Delta_d$ is connected there is a maximal face $\tau$ of $\partial \Delta_d$ and a vertex 
	$w \in \lk(v) \setminus \partial \Delta_d$ such that $\sigma = \tau * w$ is a face of~$\lk(v)$. The face $\tau$ is at most
	$(d-2)$-dimensional, or else $v * \sigma$ would be a face of dimension at least $d+1$ in~$K$. If $\tau$ was 
	$(d-2)$-dimensional, then $v * \tau$ would be a $(d-1)$-face that is contained in at least three facets, which violates
	the pseudomanifold condition. For lower-dimensional $\tau$ the face $v * \tau$ has codimension at least two and 
	a disconnected link: $\partial \Delta_d \cap \lk(v * \tau)$ is a connected component of $\lk(v * \tau)$ that does not 
	contain~$w$.
\end{proof}

The operation of \emph{stacking} a facet $\sigma$ in a $d$-pseudomanifold consists of replacing $\sigma$ by the join of a new vertex~$v$
and~$\partial\sigma$. The vertex $v$ then has degree $d+1$ and $\lk(v) \cong \partial\Delta_d$. If for some vertex $v$ of degree $d+1$ the
inverse operation of stacking -- \emph{unstacking} -- results in a well-defined $d$-pseudomanifold, we call $v$ \emph{stacking vertex}.
(That is, the boundary of the simplex does not contain stacking vertices, since unstacking is not well-defined.) A simplicial complex is
called \emph{stacked} if it can be obtain from the boundary of the simplex by repeated stacking.
The notions of stacking and stacking vertices are defined for simplicial polytopes as the corresponding notions for its boundary complex.

\begin{thm}
\label{thm:polytopal-sperner2}
	Let $P$ be a simplicial $d$-polytope on $n$ vertices without stacking vertices and $K$ a Sperner colored triangulation of~$P$. Then 
	$K$ has at least $\frac{f_{d-1}(P)-2}{d-1}$ rainbow facets.
\end{thm}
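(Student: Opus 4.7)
The plan is to exploit the flexibility of Theorem~\ref{thm:subdivision}: for each facet $\sigma$ of $P$ we realize $\partial P$ as a subdivision of $\partial\Delta_d$ in which $\sigma$ is one of the $d+1$ facets of $\Delta_d$. The remaining vertex $w_\sigma$ of $\Delta_d$ is an actual vertex of $P$, produced by the inductive construction in Theorem~\ref{thm:subdivision}. I would define $\pi_\sigma\colon V(P)\to V(\Delta_d)=V(\sigma)\cup\{w_\sigma\}$ to be the identity on $V(\sigma)\cup\{w_\sigma\}$ and to send every other vertex of $P$ to $w_\sigma$, and then verify that $\pi_\sigma\circ c$ is a Sperner coloring of $K$ regarded as a triangulation of $\Delta_d$ through the Gr\"unbaum subdivision. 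The key observation, visible from a careful inspection of the recursive construction, is that any vertex $u\in V(P)\setminus(V(\sigma)\cup\{w_\sigma\})$ subdivides some face of $\Delta_d$ that has $w_\sigma$ as a vertex, which is exactly what is needed.

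Sperner's lemma (Lemma~\ref{lem:sperner}) then yields at least one facet $\rho$ of $K$ on which $\pi_\sigma\circ c$ takes every value of $V(\Delta_d)$. Unpacking the definition of $\pi_\sigma$, such a $\rho$ must already be rainbow under $c$, and its color set $c(V(\rho))$ contains all of $V(\sigma)$. Summing over all facets $\sigma$ of $P$ yields the double-counting inequality
\[
	f_{d-1}(P) \ \le\ \sum_{\rho\text{ rainbow}} m(\rho), \qquad m(\rho) := \bigl|\{\sigma \text{ facet of } P : V(\sigma)\subseteq c(V(\rho))\}\bigr|.
\]
Since $|c(V(\rho))|=d+1$, a priori $m(\rho)\le d+1$; the main task is to upgrade this to $m(\rho)\le d-1$ using the absence of stacking vertices.

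Suppose $m(\rho)\ge d$, so that at least $d$ of the $d+1$ size-$d$ subsets of $c(V(\rho))$ are facets of $\partial P$. Applying Lemma~\ref{lem:deg-d} to the $(d-1)$-pseudomanifold $\partial P$ produces a vertex $v\in c(V(\rho))$ with $\lk_{\partial P}(v)\cong\partial\Delta_{d-1}$ on the remaining $d$ vertices of $c(V(\rho))$, and this $v$ is the common vertex of the $d$ facets. If $m(\rho)=d$ exactly, then the single size-$d$ subset of $c(V(\rho))$ that fails to be a facet of $P$ is precisely $c(V(\rho))\setminus\{v\}$; this means the $(d-1)$-simplex on the vertices of $\lk_{\partial P}(v)$ is not already a facet, so unstacking at $v$ is well-defined and $v$ is a stacking vertex, contradicting our hypothesis. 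If $m(\rho)=d+1$, all $d+1$ size-$d$ subsets of $c(V(\rho))$ are facets of $\partial P$, forcing the $(d-1)$-sphere $\partial P$ to contain a full copy of $\partial\Delta_d$ and hence to equal it, so $P=\Delta_d$. For $P\neq\Delta_d$ we thus have $m(\rho)\le d-1$ uniformly, and the double counting supplies at least $f_{d-1}(P)/(d-1)\ge(f_{d-1}(P)-2)/(d-1)$ rainbow facets; in the leftover case $P=\Delta_d$ one has $(f_{d-1}(P)-2)/(d-1)=1$ and Sperner's lemma directly produces a rainbow facet, so the stated bound holds uniformly.

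I expect the main technical obstacle to be the verification that $\pi_\sigma\circ c$ is a valid Sperner coloring of $\Delta_d$, as this requires tracing through the recursive Gr\"unbaum construction to locate every vertex of $P$ in the subdivision and to confirm that the distinguished vertex $w_\sigma$ lies in the relevant face of $\Delta_d$.
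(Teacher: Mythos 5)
Your proof is correct and follows essentially the same route as the paper's: for each facet $\sigma$ of $P$, use Theorem~\ref{thm:subdivision} to view $\partial P$ as a subdivision of $\partial\Delta_d$ with $\sigma$ a facet, recolor all non-$\sigma$ colors to the apex $w_\sigma$, apply Sperner's lemma, and bound the double-counting multiplicity by $d-1$ using Lemma~\ref{lem:deg-d} and the absence of stacking vertices. The verification you flag as the main obstacle is in fact routine and needs no tracing of the recursion: since $\sigma$ is a common facet of $P$ and $\Delta_d$ it is not further subdivided, so any vertex $u\in V(P)\setminus(V(\sigma)\cup\{w_\sigma\})$ cannot lie in $|\sigma|$ and hence lies in a face of $\Delta_d$ containing $w_\sigma$, which is exactly what makes $\pi_\sigma\circ c$ a Sperner coloring.
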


\begin{proof}
	For the $d$-simplex $\Delta_d$ we have that $\frac{f_{d-1}(\Delta_d)-2}{d-1} = 1$ and thus the statement is true by the 
	classical Sperner lemma. Otherwise the fact that $P$ has no stacking vertices implies that $P$ does not have vertices of degree~$d$.
	For each facet $\sigma$ of $P$ we can consider $\partial P$ as a subdivision of~$\partial\Delta_d$, where $\sigma$ is a facet of the 
	simplex. We want to invoke the classical Sperner lemma for each such simplex. This will yield the result by multiple counting.
	
	Fix a facet $\sigma$ of $P$ and let $\Delta_d$ be the simplex given by Theorem~\ref{thm:subdivision} such that $\Delta_d = \sigma * v$
	for some vertex $v \in P \setminus \sigma$ and $\partial P$ subdivides~$\partial \Delta_d$. Recolor the vertices of $K$ to yield
	a Sperner coloring with respect to~$\Delta_d$. We accomplish this by coloring every vertex of $K$ that is not colored by one of the
	colors in~$\sigma$ by the same color as~$v$. Now Sperner's lemma guarantees the existence of a rainbow facet. In particular, this
	facet is also a rainbow facet in the original coloring of $K$ that exhibits all the colors of~$\sigma$.
	
	We repeat this process for each facet $\sigma$ of~$P$. A rainbow facet of $K$ cannot be counted more than $d-1$ times in this way
	by Lemma~\ref{lem:deg-d} since $P$ does not contain vertices with degree~$d$. So the number of rainbow facets in $K$ is at least
	$\frac{f_{d-1}(P)}{d-1} > \frac{f_{d-1}(P)-2}{d-1}$.
\end{proof}

For simplicial polytopes without stacking vertices the bound of Theorem~\ref{thm:polytopal-sperner2} is at least as good as that of
Theorem~\ref{thm:polytopal-sperner}. This follows from Barnette's lower bound theorem~\cite{barnette1973}. 
We have that $\frac{f_{d-1}(P)-2}{d-1} \ge \frac{(d-1)n - (d+1)(d-2)-2}{d-1} = \frac{(d-1)n - d(d-1)}{d-1} = n-d$.

The condition in Theorem~\ref{thm:polytopal-sperner2} that $P$ contain no stacking vertices is superfluous. If $P'$ is obtained
from the simplicial polytope $P$ by stacking an arbitrary facet, then any Sperner colored triangulation of $P'$ has at least one
additional rainbow facet compared to the minimal number of rainbow facets of a Sperner colored triangulation of~$P$. Again, 
we will prove this in greater generality for pseudomanifolds below. In order to obtain quantitative results for pseudomanifolds we need
to employ different methods. Our proof essentially is a path-following argument (one of the standard ways to prove Sperner-type 
results). However, we will first introduce level sets of PL maps as a convenient bookkeeping device for path-following arguments.

Given a smooth map $f\colon M \longrightarrow N$ from a $(d+1)$-manifold $M$ to 
a $d$-manifold~$N$ and a regular value $y \in N$, the fiber $f^{-1}(y)$ consists of 
disjoint paths with endpoints in the boundary~$\partial M$. This preimage theorem remains true in the PL category. This presents
the opportunity of phrasing path following arguments in certain cell complexes by 
considering a facewise affine map and its fibers. 
 
Here we will be interested in facewise affine maps $f\colon K \longrightarrow \R^d$ from a
$d$-dimensional pseudomanifold~$K$ to Euclidean $d$-space and relate the parity of zeros
of~$f$ to the parity of the zeros of the first $d-1$ components of~$f$ on the boundary~$\partial K$
with positive last component. For this the map~$f$ has to be sufficiently non-degenerate, 
that is, the zeros of $f$ are isolated and not in the boundary $\partial K$, and the zeros of the 
first $d-1$ components of $f$ form pairwise disjoint embedded paths and embedded circles in~$K$. 
Denote by $r(f)$ the number of zeros of the map~$f$,
and denote by $r_+(f)$ the number of zeros of $(f_1, \dots, f_{d-1})$ in~$\partial K$ with positive last
component~$f_d$. We will extend a result of Ramos~\cite{ramos1996} relating the parity of $r(f)$ and $r_+(f)$ to one another.
Ramos used these methods to prove results about hyperplane mass partitions.

\begin{lem}
\label{lem:path-following-PL}
	Let $f\colon K \longrightarrow \R^d$ be a non-degenerate face-wise affine map from a $d$-pseudomanifold to~$\R^d$.
	Then $r(f) \equiv r_+(f) \mod 2$.
\end{lem}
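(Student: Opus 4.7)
The plan is to analyze the level set
$$Z \;=\; (f_1,\dots,f_{d-1})^{-1}(0) \;\subseteq\; K,$$
which plays the role of a PL regular preimage of the map to $\R^{d-1}$ obtained by dropping the last coordinate of~$f$. The non-degeneracy hypothesis is precisely calibrated to guarantee that $Z$ is a finite disjoint union of embedded PL arcs (whose endpoints lie in $\partial K$) and embedded PL circles inside~$K$, and that the zeros of~$f$ itself are the finitely many isolated points of~$Z$ at which $f_d$ vanishes. Thus $r(f)$ is exactly the number of zeros of the restriction $f_d|_Z$ in the relative interior of~$Z$, while each endpoint of an arc is a zero of $(f_1,\dots,f_{d-1})$ on $\partial K$ with $f_d \neq 0$ (any endpoint with $f_d=0$ would be a zero of $f$ lying in $\partial K$, contradicting non-degeneracy).

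Next, I would analyze $f_d$ component-by-component on $Z$. Since $f$ is facewise affine, $f_d$ restricts to a continuous piecewise-linear function with only isolated zeros on each component of~$Z$. On a circle the number of sign changes of a continuous function is even, so every circle contributes an even number to $r(f)$. On an arc, traversing from one endpoint to the other, the parity of the number of zeros of $f_d$ is $0$ if the two endpoint values of $f_d$ share a sign and $1$ if they are of opposite signs. Introducing counts $A$, $B$, $C$ of arcs with endpoint sign patterns $(+,+)$, $(-,-)$, and opposite-sign respectively, this gives
$$r(f) \;\equiv\; C \pmod 2.$$

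To finish, I would simply read off $r_+(f)$ from the same decomposition: each endpoint of an arc is one of the boundary zeros of $(f_1,\dots,f_{d-1})$ counted by $r_+(f)$ (together with its sign of~$f_d$), and conversely every such boundary zero occurs as an endpoint of some component of $Z$. Therefore
$$r_+(f) \;=\; 2A + C \;\equiv\; C \pmod 2,$$
and combining gives $r(f) \equiv r_+(f) \pmod 2$.

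The main obstacle is not the parity bookkeeping — that is essentially automatic — but rather justifying the arc-and-circle structure of $Z$ on a pseudomanifold, where codimension-$\ge 2$ faces can fail to be locally Euclidean. I would address this by invoking the non-degeneracy assumption as extracted from Ramos~\cite{ramos1996}: the pseudomanifold condition (each $(d-1)$-face in at most two facets, connected links in higher codimension) together with the stipulations that zeros of $f$ are isolated and not in $\partial K$, and that the $(f_1,\dots,f_{d-1})$-zero set consists of pairwise disjoint embedded paths and circles, together force $Z$ to meet each facet in a finite collection of affine segments that glue across $(d-1)$-faces into the claimed 1-complex. Once this is in hand, the parity argument above is immediate.
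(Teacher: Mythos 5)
Your proof is correct and takes essentially the same route as the paper: analyze $Z=(f_1,\dots,f_{d-1})^{-1}(0)$ as a union of embedded arcs and circles, observe circles contribute evenly to $r(f)$, and pair arc endpoints by sign of $f_d$ to match parities. One small note: the ``main obstacle'' you flag at the end is not an obstacle in this setting, because the paper's definition of non-degeneracy (stated just before the lemma) already \emph{postulates} that $Z$ consists of pairwise disjoint embedded paths and circles, so there is nothing further to justify.
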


\begin{proof}
	The set $Z = \{x \in K \: | \: f_1(x) = \dots = f_{d-1}(x) = 0\}$ consists of embedded circles and embedded paths with their
	endpoints in the boundary~$\partial K$. The embedded circles contain an even number of zeros of~$f$ and thus have no
	influence on $r(f) \mod 2$. We pair the points in $\partial K \cap Z$: the points $x, y \in \partial K \cap Z$ are paired with 
	one another if they are connected by a path in~$Z$. If $f_d(x)$ and $f_d(y)$ have equal sign then the path connecting 
	$x$ and $y$ contains an even number of zeros of~$f$. If the signs of $f_d(x)$ and $f_d(y)$ are opposite then this path 
	contains an odd number of zeros of~$f$. This proves the lemma since either $f_d(x) > 0$ or $f_d(y) > 0$, so precisely 
	one of $x$ or $y$ contributes one to~$r_+(f)$. By non-degenericity $f_d(x) = 0$ or $f_d(y) = 0$ is not allowed.
\end{proof}

We now define what it means for a pseudomanifold to be Sperner colored. 
Let $B$ be a $(d-1)$-pseudomanifold and $K$ a $d$-pseudomanifold with boundary. We say that $K$ is \emph{Sperner colored with respect to~$B$}
if $\partial K$ is a subdivision of~$B$ (where we fix one particular way in which $\partial K$ subdivides~$B$) and there is a \emph{Sperner coloring}
$c\colon V(K) \longrightarrow V(B)$, that is, if vertex $v$ of~$\partial K$ subdivides face $\sigma$ of~$B$ then $c(v)$ is a vertex of~$\sigma$.
We now use path-following via PL maps to prove a quantitative Sperner's lemma for pseudomanifolds. The following lemma is
central.

\begin{lem}
\label{lem:rainbow}
	Let $B$ be a $(d-1)$-pseudomanifold and $K$ a $d$-pseudomanifold with boundary such that $\partial K$ is a subdivision of~$B$. 
	Let $K$ be Sperner colored with respect to $B$. Then for every $(d-1)$-face~$\sigma$ of~$B$ there is a rainbow facet of~$K$ 
	that exhibits the $d$ colors of~$\sigma$ (and one additional distinct color).
\end{lem}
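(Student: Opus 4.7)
The plan is to use a double-counting argument in the spirit of the classical proof of Sperner's lemma, which is a combinatorial shadow of the path-following method that Lemma~\ref{lem:path-following-PL} formalizes via PL maps. Fix a $(d-1)$-face $\sigma$ of $B$ with vertex set $\{v_1,\dots,v_d\}$, and call a $(d-1)$-face $\tau$ of $K$ \emph{$\sigma$-colored} if the $d$ vertices of $\tau$ receive exactly the colors $v_1,\dots,v_d$. First I would count incidences $(F,\tau)$ with $F$ a facet of $K$ and $\tau\subset F$ a $\sigma$-colored $(d-1)$-face. From the facet side: a facet with color set $\{v_1,\dots,v_d,v'\}$ for some $v'\notin\sigma$ --- i.e., a rainbow facet of the desired kind --- contributes exactly one such $\tau$ (the face opposite the $v'$-colored vertex); a facet with color set exactly $\{v_1,\dots,v_d\}$ (so with some $v_j$ repeated) contributes exactly two; all other facets contribute zero. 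From the face side, each interior $(d-1)$-face of $K$ lies in two facets and each boundary $(d-1)$-face in one, since $K$ is a $d$-pseudomanifold with boundary. Reducing modulo two yields
\[
\#\{\text{rainbow facets of }K\text{ exhibiting }\sigma\text{'s colors plus one more}\} \equiv \#\{\sigma\text{-colored }(d-1)\text{-faces of }\partial K\} \pmod{2}.
\]

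Next I would show the right-hand count is odd. By the Sperner condition, any $\sigma$-colored $(d-1)$-face of $\partial K$ lies in the subdivision of some $(d-1)$-face of $B$ whose vertex set contains $\{v_1,\dots,v_d\}$; since distinct faces of the simplicial complex $B$ have distinct vertex sets, this face must be $\sigma$ itself. The part of $\partial K$ subdividing $\sigma$ is then a pure simplicial decomposition of the $(d-1)$-simplex $\sigma$, and the restriction of $c$ gives a classical Sperner coloring of it with respect to $\sigma$. Provided I can verify that every interior $(d-2)$-face of this decomposition is contained in exactly two of its $(d-1)$-faces, Lemma~\ref{lem:sperner} supplies an odd number of fully colored facets, that is, an odd number of $\sigma$-colored $(d-1)$-faces of $\partial K$. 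Together with the parity congruence above, this forces the number of rainbow facets of $K$ exhibiting $\sigma$'s colors plus one additional color to be odd, hence at least one, as required.

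The main obstacle is precisely the local pseudomanifold check just isolated. Given a $(d-2)$-face $\eta$ of $\partial K$ in the relative interior of $\sigma$, the pseudomanifold property of $\partial K$ provides exactly two $(d-1)$-faces of $\partial K$ containing $\eta$, and I would argue that both subdivide $\sigma$: any $(d-1)$-face of $\partial K$ subdividing a different facet $\sigma'$ of $B$ is contained in $\sigma'$, so it would force $\eta$ into $\sigma\cap\sigma'\subseteq\partial\sigma$, contradicting the assumption that $\eta$ lies in the relative interior of $\sigma$. Once this geometric check is in place, the double-counting step and the appeal to classical Sperner go through cleanly and give the lemma.
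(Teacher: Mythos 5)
Your proof is correct, and it takes a genuinely more elementary route than the paper's. The paper first realizes $\Delta_d$ in $\R^d$ so that the open ray $\{y_1=\dots=y_{d-1}=0,\,y_d>0\}$ meets only the relative interior of $\mathrm{conv}\{v_1,\dots,v_d\}$, builds a facewise affine map $f\colon K\to\Delta_d$ by sending $c_i$-colored vertices to $v_i$ and all others to $v_0$, and then invokes Lemma~\ref{lem:path-following-PL} to conclude $r(f)\equiv r_+(f)\pmod 2$; the reduction of $r_+(f)$ to a classical Sperner count on the subdivision of $\sigma$ matches your final step exactly. What you have done instead is collapse the PL path-following into the direct ``door-counting'' incidence argument: your bookkeeping of how many $\sigma$-colored $(d-1)$-faces each facet contains (zero, one, or two, depending on the color multiset) and of how many facets each such $(d-1)$-face meets (one on the boundary, two in the interior) is precisely the combinatorial content of the pairing in the proof of Lemma~\ref{lem:path-following-PL}, specialized to this particular $f$. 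Your approach is shorter and fully self-contained for this lemma, since it needs no geometric realization and no nondegeneracy discussion; the paper's approach pays the price of setting up the PL apparatus but gains a reusable parity lemma in the style of Ramos that the authors evidently wanted to isolate. One remark on your ``main obstacle'': the check that interior $(d-2)$-faces of the induced subdivision of $\sigma$ lie in exactly two $(d-1)$-faces is correct as you argued it (via faces of $\partial K$ landing in a unique facet of $B$), but it is also somewhat automatic, since the faces of $\partial K$ contained in $\sigma$ form a simplicial subdivision of the $(d-1)$-simplex $\sigma$, to which the classical Sperner's lemma applies directly.
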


\begin{proof}
	Denote the $d$ colors of the vertices of~$\sigma$ by~${c_1, \dots, c_d}$.
	Suppose the simplex $\Delta_d = \mathrm{conv}\{v_0, v_1, \dots, v_d\}$ is realized in $\R^d$ in such a way that 
	the origin is in the interior of~$\Delta_d$ and that only the relative interior of the face $\mathrm{conv}\{v_1, \dots, v_d\}$ 
	intersects the ray $\{(y_1, \dots, y_d) \in \R^d \: | \: y_1 = \dots = y_{d-1} = 0, y_d > 0\}$ and no other 
	boundary face of $\Delta_d$ intersects this ray. Define an affine map $f\colon K \longrightarrow \Delta_d \subseteq \R^d$ 
	by sending every vertex of color $c_i$ to vertex~$v_i$ of~$\Delta_d$ and every vertex that is not colored by any $c_i$ to 
	vertex~$v_0$ of~$\Delta_d$. The map $f$ is non-degenerate.
	
	Now $r_+(f)$ counts those faces in $\partial K$ that are colored precisely by $c_1, \dots, c_d$ 
	and there are an odd number of those $(d-1)$-faces by the ordinary Sperner lemma, while $r(f)$ 
	counts the number of rainbow facets that use the colors $c_1, \dots, c_d$ and one additional 
	distinct color. The claim now follows from Lemma~\ref{lem:path-following-PL}.
\end{proof}

\begin{thm}
\label{thm:sperner-pseudomfld}
	Let $B$ be a $(d-1)$-pseudomanifold and $K$ a $d$-pseudomanifold with boundary such that $\partial K$ is a subdivision of~$B$. 
	Let $K$ be Sperner colored with respect to $B$. Then $K$ has at least $\frac{f_{d-1}(B)-2}{d-1}$ rainbow facets with pairwise 
	distinct sets of colors. For $d \ge 4$ this bound may hold with equality only if $B$ is stacked. In particular, for $d \ge 4$ the lower bound is not 
	sharp if $B$ is not a sphere.
\end{thm}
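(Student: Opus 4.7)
The plan is to combine the multiple-counting approach of Theorem~\ref{thm:polytopal-sperner2} with an induction on stacking vertices, thereby lifting the no-stacking-vertex hypothesis to the full generality of pseudomanifolds. The two key inputs are Lemma~\ref{lem:rainbow}, which for every $(d-1)$-face $\sigma$ of $B$ produces a rainbow facet of $K$ extending the $d$ colors of $\sigma$ by one more, and an adapted form of Lemma~\ref{lem:deg-d} whose proof carries over verbatim to $(d-1)$-pseudomanifolds: $d$ pairwise distinct facets using only $d+1$ vertices force a vertex whose link is $\partial\Delta_{d-1}$, i.e., a stacking vertex.

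First I would set up the count. Let $F$ denote the set of distinct color sets of rainbow facets of $K$, and for each $C\in F$ let $\mu(C)$ be the number of $(d-1)$-faces of $B$ whose colors lie in $C$. Applying Lemma~\ref{lem:rainbow} to every $\sigma$ gives $f_{d-1}(B)\le\sum_{C\in F}\mu(C)$. Trivially $\mu(C)\le d+1$. If $\mu(C)=d+1$ for some $C$, then all $d+1$ many $d$-subsets of $C$ are facets of $B$, so $B$ contains $\partial\Delta_d$ as a subcomplex, and strong connectivity forces $B=\partial\Delta_d$, in which case the bound holds trivially. If $\mu(C)=d$, the Lemma~\ref{lem:deg-d}-type argument forces the $d$ matching facets to share a common vertex $v\in C$ with $\lk_B(v)=\partial\Delta_{d-1}$ on $C\setminus\{v\}$; thus $v$ is a stacking vertex and $C=C_v:=V(\lk_B(v))\cup\{v\}$ is uniquely determined by $v$. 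In particular, if $B\ne\partial\Delta_d$ has no stacking vertex, then every $C\in F$ satisfies $\mu(C)\le d-1$, so $|F|\ge f_{d-1}(B)/(d-1)>(f_{d-1}(B)-2)/(d-1)$.

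Next, to cover the case where $B$ does have a stacking vertex $v$ with $\lk_B(v)=\partial\Delta_{d-1}$ on $\{w_1,\dots,w_d\}$, I would induct on $f_{d-1}(B)$. Let $B'$ be the unstacking of $B$ at $v$, so that $f_{d-1}(B')=f_{d-1}(B)-(d-1)$. Since $\mathrm{star}_B(v)=v*\partial\Delta_{d-1}$ and the replacement facet $\{w_1,\dots,w_d\}$ are $(d-1)$-balls with the same boundary $\partial\Delta_{d-1}$, the underlying cell complex of $\partial K$ can be reinterpreted without change as a subdivision of $B'$, with the vertex $v$ of $K$ now lying interior to the new facet $\{w_1,\dots,w_d\}$ of $B'$. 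Relabeling every vertex currently colored $v$ by some element of $\{w_1,\dots,w_d\}$ then yields a Sperner-coloring of $K'$ (same triangulation) with respect to $B'$. The key combinatorial claim is that the relabeling can be chosen so that $|F(K')|\ge|F(K)|-1$: only the color set $C_v$ is necessarily destroyed, because every rainbow facet with color set $C_v$ already contains each $w_i$, and any recoloring of its $v$-vertex duplicates a label; for every other $C\in F(K)$ with $v\in C$ the inclusion $|C\cap\{w_1,\dots,w_d\}|<d$ furnishes some $w_i\notin C$ to which a representative $v$-vertex of a rainbow facet with color set $C$ can be sent while preserving rainbowness. Together with the induction hypothesis $|F(K')|\ge(f_{d-1}(B')-2)/(d-1)=(f_{d-1}(B)-2)/(d-1)-1$ this gives the target bound.

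For the equality characterization when $d\ge 4$, I would track equality through the induction: tightness forces $\mu(C_v)=d$ to actually occur and the inductive step to have $|F(K')|=|F(K)|-1$ at every stage, so that after iterated unstacking $B$ reduces to $\partial\Delta_d$, proving $B$ is stacked. The hypothesis $d\ge 4$ enters in analogy with Kalai's rigidity characterization of equality in the Lower Bound Theorem for simplicial $(d-1)$-spheres. The hardest step I anticipate is the recoloring claim in the inductive step: a single vertex labeled $v$ can belong to several rainbow facets carrying different color sets, and the competing constraints on its new label may not always be simultaneously resolvable in a naive greedy fashion. A Hall- or SDR-type argument choosing representative $v$-vertices carefully for each color set in $F(K)\setminus\{C_v\}$ will be required to guarantee that no more than the single color set $C_v$ is lost. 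If the pure recoloring approach proves too restrictive, an alternative is to modify $K$ topologically by attaching a $d$-simplex along the star of $v$ in $\partial K$, thereby pushing $v$ into the interior of the new complex and bypassing the Sperner constraint on $v$ entirely.
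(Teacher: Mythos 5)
Your multiple-counting argument for the case when $B$ has no stacking vertices matches the paper's and is correct; the explicit bookkeeping with $\mu(C)$ and the case split $\mu(C)\in\{d,d+1\}$ is a clean formalization of the same idea. The inductive step over stackings, however, contains a structural flaw: the inequality you label the key combinatorial claim, $|F(K')| \ge |F(K)| - 1$, rearranges to $|F(K)| \le |F(K')| + 1$, an \emph{upper} bound on $|F(K)|$. Combined with the induction hypothesis $|F(K')| \ge (f_{d-1}(B)-2)/(d-1) - 1$ this says nothing about a lower bound for $|F(K)|$; what the induction actually requires is $|F(K)| \ge |F(K')| + 1$. Your supporting argument (tracking which color sets of $F(K)$ survive the relabeling) genuinely flows in the direction $F(K) \to F(K')$, so this is not a stray sign typo but a misdirected induction. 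A secondary issue: because you allow distinct $v$-colored vertices to receive distinct new labels $w_i$, a facet rainbow for the relabeled coloring need not be rainbow for the original one (two $v$-colored vertices of the same facet can become distinguishable after relabeling), so you cannot lift $F(K')$ into $F(K)$ under your relabeling rule either.

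The paper resolves this by arguing from the opposite end. It keeps the coloring $c$ of $K$ with respect to the stacked $B$ and, for each of the $d$ vertices $w$ of the replaced facet, defines an auxiliary Sperner coloring $c_w$ with respect to the unstacked $B'$ by sending \emph{all} $v$-colored vertices uniformly to the single vertex $w$. Uniformity guarantees that every $c_w$-rainbow facet is automatically a $c$-rainbow facet, so the $(f_{d-1}(B')-2)/(d-1)$ rainbow facets supplied by the induction hypothesis for each $c_w$ all lie among the $c$-rainbow facets of $K$. Lemma~\ref{lem:rainbow}, applied to a $(d-1)$-face of $B$ through $v$, produces a $c$-rainbow facet whose color set contains $v$, and this is used to show that the $d$ families cannot all coincide, yielding the one extra rainbow facet needed. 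Running the induction in this direction also dissolves the Hall/SDR difficulty you anticipated and the alternative topological surgery --- both obstacles were artifacts of attacking the step from the wrong side. The equality characterization for $d \ge 4$ should then be revisited within the corrected framework, by tracking where the strict inequality in the degree-at-least-$(d+1)$ base case fails.
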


\begin{proof}
	We first assume that the degree of any vertex in $B$ is at least~${d+1}$. Using Lemma~\ref{lem:rainbow}
	we pair each $(d-1)$-face $\sigma$ of~$B$ with a rainbow facet of~$K$ that exhibits the colors of~$\sigma$.
	Any rainbow facet of~$K$ gets paired with at most~${d-1}$ faces of~$B$ in this way: suppose a rainbow
	facet $\sigma$ of~$K$ exhibited the $d$ colors of~$d$ faces of~$B$; since $\sigma$ only involves $d+1$ 
	vertices, Lemma~\ref{lem:deg-d} yields a vertex of degree~$d$ in~$B$. We assumed that no such vertex exists.
	Thus the number of rainbow facets in~$K$ is at least $\frac{f_{d-1}(B)}{d-1} > \frac{f_{d-1}(B)-2}{d-1}$.
	
	It is left to show that the lower bound is preserved when we perform stackings on~$B$. Since stacking replaces a
	$(d-1)$-face of~$B$ by $d$ new $(d-1)$-faces, we have to show that stacking increases the lower bound for the 
	number of rainbow facets by one. If $B$ has no stacking vertices then either every vertex has degree at least $d+1$ and 
	the lower bound holds by the first part of the proof, or $B$ is the boundary of the simplex when $\frac{f_{d-1}(B)-2}{d-1} = 1$,
	which is a lower bound for the number of rainbow facets by Sperner's lemma.
	
	Let $K'$ be a $d$-pseudomanifold with boundary that is Sperner colored with respect to~$B'$, where $B'$ is obtained 
	from $B$ by stacking facet $\sigma$ with vertex~$v$. The triangulation $\partial K'$ also subdivides~$B$. Denote the Sperner coloring 
	of $K'$ with respect to $B'$ by $c' \colon V(K') \longrightarrow V(B')$. For each vertex $w$ of $\sigma$ we can modify $c'$ to a Sperner
	coloring of $K'$ with respect to~$B$ by defining $c_w\colon V(K') \longrightarrow V(B)$ by recoloring every vertex of color~$c'(v)$
	with~$c'(w)$. For each Sperner coloring $c_w$ we can find $r = \frac{f_{d-1}(B)-2}{d-1}$ rainbow facets of~$K'$. A rainbow facet 
	of the coloring $c_w$ is also a rainbow facet of~$c'$. We only need to show that the sets of $r$ rainbow facets that we obtain for 
	each coloring $c_w$, are not all the same, and thus there must be at least $r+1$ rainbow facets for~$c'$. If this is not the case, 
	then there are only two possibilities: each rainbow facet (which is a rainbow facet for any~$c_w$) does not contain any vertex~$u$ 
	with $c'(u)  = c'(v)$, or for every vertex~$u$ of every rainbow facet we have that $c'(u) \ne c'(w)$ for every vertex~$w$ of~$\sigma$. 
	Let $\tau$ be a $(d-1)$-face of $B'$ containing~$v$. There is a rainbow facet of $K'$ that uses all colors in~$\tau$ by Lemma~\ref{lem:rainbow}. 
	So both of these cases are impossible. 
\end{proof}

Musin~\cite[Cor.~3.4]{musin2015} proved that any Sperner coloring of a triangulation of a PL manifold with boundary an $n$-vertex 
polytope has at least $n-d$ rainbow facets.
This lower bound is improved by Theorem~\ref{thm:sperner-pseudomfld} (and extended to a larger class of objects). 
Barnette's lower bound theorem holds more generally for manifolds, 
see Kalai~\cite{kalai1987}, and pseudomanifolds, see Fogelsanger~\cite{fogelsanger1988} and Tay~\cite{tay1995}. 
Thus we have that $\frac{f_{d-1}(B)-2}{d-1} \ge n-d$ as before.

Meunier~\cite{meunier2006} proves a slight improvement of the lower bound of $n-d$ for pseudomanifolds with boundary that can be linearly 
embedded into~$\R^d$. In fact, Meunier proves his result for the more general class of \emph{polytopal bodies} where faces are not necessarily
simplices but arbitrary polytopes, where every $(d-1)$-face is contained in one or two facets and the boundary complex is strongly connected.
The boundary complex itself is not required to be a polytope (or even simply connected).

Now that Theorem~\ref{thm:sperner-pseudomfld} provides a quantitative version of Sperner's lemma for pseudomanifolds, and Bapat's result,
Theorem~\ref{thm:colored-sperner}, establishes a colorful version of Sperner's lemma, we must ask for a Sperner-type result that is 
simultaneously quantitative and colorful. Theorem~\ref{thm:colored-sperner-pseudomfld} is this result in a weak sense: it is a colorful 
extension of Theorem~\ref{thm:sperner-pseudomfld}, but for the case of a simplex it specializes to a weaker version of Bapat's result; we do not establish the
existence of a rainbow facet as in Theorem~\ref{thm:colored-sperner}, but of a rainbow face of perhaps lower dimension.
Let $K$ be a $d$-pseudomanifold with boundary such that $\partial K$ subdivides the closed $(d-1)$-pseudomanifold~$B$. 
Let $c_0, \dots, c_d\colon V(K) \longrightarrow V(B)$ be $d+1$ Sperner colorings of $K$ with respect to~$B$. A face $\sigma$
of $K$ is called \emph{rainbow face with respect to $c_0, \dots, c_d$} if the set $\{c_0(v_0), \dots, c_d(v_d)\}$ has cardinality
$d+1$ where $v_0, \dots, v_d$ are not necessarily distinct vertices of~$\sigma$. This condition is weaker than the one shown 
in Theorem~\ref{thm:colored-sperner}.

\begin{thm}
\label{thm:colored-sperner-pseudomfld}
	Let $B$ be a $(d-1)$-pseudomanifold and $K$ a $d$-pseudomanifold with boundary such that $\partial K$ is a subdivision of~$B$. 
	Let $c_0, \dots, c_d\colon V(K) \longrightarrow V(B)$ be $d+1$ Sperner colorings of $K$ with respect to~$B$. Then $K$ has 
	$m \ge \frac{f_{d-1}(B)-2}{d-1}$ subsets $I_1, \dots, I_m \subseteq V(B)$ of cardinality $d+1$ such that for each $1 \le k \le m$
	there is a rainbow face with respect to $c_0, \dots, c_d$ exhibiting the colors in~$I_k$.
\end{thm}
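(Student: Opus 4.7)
The plan is to follow the proof of Theorem~\ref{thm:sperner-pseudomfld} step by step, replacing the uncolored Lemma~\ref{lem:rainbow} by a colorful analog: for each $(d-1)$-face~$\tau$ of~$B$ there is a face of~$K$ together with (possibly repeated) vertices $v_0, \dots, v_d$ of that face such that $\{c_0(v_0), \dots, c_d(v_d)\}$ has cardinality $d+1$ and contains the colors of~$\tau$. Granted this, the counting step and the stacking reduction from the uncolored argument carry over with only minor bookkeeping.

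To prove the colorful analog I would reduce to the uncolored Lemma~\ref{lem:rainbow} on the barycentric subdivision~$K'$ of~$K$. Fix any total order on $V(K)$ and, for each face~$\sigma$ of~$K$, let $u_\sigma$ denote the minimum vertex of~$\sigma$ in this order. Define a combined coloring $c^*\colon V(K') \to V(B)$ on each barycenter $b_\sigma$ by
\[
c^*(b_\sigma) = c_{\dim \sigma}(u_\sigma).
\]
For each $\sigma \subseteq \partial K$ the vertex $u_\sigma$ lies on the minimal face of~$B$ containing~$\sigma$, and since each $c_j$ is individually Sperner, the color $c_{\dim \sigma}(u_\sigma)$ automatically lies on that face; hence $c^*$ is a Sperner coloring of~$K'$ with respect to~$B$. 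Applying Lemma~\ref{lem:rainbow} to $(K', c^*)$ yields, for each $(d-1)$-face~$\tau$ of~$B$, a $c^*$-rainbow facet of~$K'$. Such a facet corresponds to a flag $\sigma_0 \subsetneq \sigma_1 \subsetneq \dots \subsetneq \sigma_d$ of faces of~$K$ with $\dim \sigma_j = j$, and setting $v_j = u_{\sigma_j}$ produces $d+1$ vertices of the facet~$\sigma_d$ of~$K$ (with repetitions allowed) whose images under $c_0, \dots, c_d$ form a $(d+1)$-element color set containing the colors of~$\tau$.

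The remainder of the argument mirrors Theorem~\ref{thm:sperner-pseudomfld}. Assuming every vertex of~$B$ has degree at least~$d+1$, any rainbow color set~$I$ of size $d+1$ arises from at most $d-1$ of the $(d-1)$-faces of~$B$; otherwise $d$ distinct facets of~$B$ would have vertex sets all contained among the $d+1$ vertices of~$I$, contradicting Lemma~\ref{lem:deg-d}. This yields at least $\frac{f_{d-1}(B)}{d-1}$ distinct rainbow color sets. For the stacking induction, for each vertex~$w$ of the stacked facet recolor every $c_i$ by sending each $K$-vertex with $c_i$-color equal to the stacking vertex~$v$ to~$w$ instead; the rainbow color sets under the recoloring inject into those under the original tuple, and the extra rainbow color set needed to increase the bound by one comes from applying the colorful analog to a $(d-1)$-face of the stacked~$B$ containing~$v$, whose rainbow color set must meet the vertex set of the stacked facet and therefore cannot lie in the common image of the recolorings. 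The main obstacle is verifying the Sperner condition for the combined coloring~$c^*$ in spite of the arbitrary choice of $u_\sigma$; this is the only genuinely new ingredient beyond Theorem~\ref{thm:sperner-pseudomfld}, and it works precisely because each $c_j$ is individually Sperner.
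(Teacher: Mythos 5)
Your construction of the combined coloring $c^*$ on the barycentric subdivision $K'$ is precisely the paper's device (the paper also permits an arbitrary choice of the vertex in each face $\sigma$, with the same justification that each $c_j$ is individually Sperner). What you miss is that once $c^*$ is in hand, no further work is needed: $K'$ is itself a $d$-pseudomanifold with boundary, $\partial K'$ subdivides $\partial K$ and hence subdivides $B$, and $c^*$ is a Sperner coloring of $K'$ with respect to $B$, so Theorem~\ref{thm:sperner-pseudomfld} applies directly to the pair $(K', c^*)$ and produces $\frac{f_{d-1}(B)-2}{d-1}$ rainbow facets of $K'$ with pairwise distinct color sets. Each such facet is a flag $\sigma_0 \subsetneq \dots \subsetneq \sigma_d$ of faces of $K$, and its $c^*$-color set equals $\{c_0(u_{\sigma_0}), \dots, c_d(u_{\sigma_d})\}$, which is a rainbow face of $K$ with respect to $c_0, \dots, c_d$ exhibiting that very set; the distinct color sets are the $I_k$. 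Your plan to instead re-run the proof of Theorem~\ref{thm:sperner-pseudomfld} on $(K',c^*)$ from Lemma~\ref{lem:rainbow} onward — redoing the counting via Lemma~\ref{lem:deg-d} and repeating the stacking induction at the level of the entire tuple $(c_0,\dots,c_d)$, with the attendant bookkeeping about injecting rainbow color sets under simultaneous recoloring — is a correct but unnecessary detour; it re-derives the black box you already have. The one place where your write-up genuinely adds value is the explicit verification that $c^*$ is Sperner with respect to $B$, a point the paper asserts without comment.
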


\begin{proof}
	Denote by $K'$ the barycentric subdivision of~$K$. If vertex $v$ of $K'$ subdivides a face $\sigma$ of dimension~$k$ define $c(v) = c_k(w)$,
	where $w$ is an arbitrary vertex of~$\sigma$. Now $c\colon V(K') \longrightarrow V(B)$ is a Sperner coloring of~$K'$ with respect to~$B$.
	By Theorem~\ref{thm:sperner-pseudomfld} there are $\frac{f_{d-1}(B)-2}{d-1}$ rainbow facets that use pairwise distinct sets of colors.
	These correspond to rainbow faces with respect to $c_0, \dots, c_d$.
\end{proof}

The special case of $c_0 = c_1 = \dots = c_d$ is Theorem~\ref{thm:sperner-pseudomfld}. Here a face that exhibits $d+1$ distinct colors
must necessarily be a facet.

Let $K$ be a pseudomanifold with boundary. Identify the $n$ vertices of $\partial K$ with $1, \dots, n$. A covering of $K$ by closed sets
$C_1, \dots, C_n$ such that each face $\sigma$ of $\partial K$ is covered by $\bigcup_{i \in \sigma} C_i$ is called \emph{KKM cover}.
KKM covers were defined by De Loera, Peterson, and Su~\cite{deLoera2002} for polytopes. They established a quantitative KKM theorem
for $d$-polytopes as a corollary of their polytopal Sperner lemma, and thus obtain $n-d$ different $(d+1)$-fold intersections of sets that are
part of the KKM cover. Actually De Loera, Peterson, and Su relate the lower bound for the number of $(d+1)$-fold intersections to 
the covering number of the polytope, that is, the minimal number of $d$-simplices which share all their vertices with the $d$-polytope $P$ and
together entirely cover~$P$.
Here we extend (the simplicial case) to a quantitative KKM theorem for pseudomanifolds with our usual improved
lower bound that depends on the number of $(d-1)$-faces instead of the number of vertices. In fact, we immediately prove a colorful
version of this theorem, which specializes to a ``colorless'' KKM theorem by considering the same KKM cover $d+1$ times.

\begin{thm}
\label{thm:colored-KKM-pseudomfld}
	Let $K$ be a $d$-dimensional pseudomanifold with boundary and suppose the boundary $\partial K$ has $n$ vertices.
	Let $\{C^0_i\}_{i = 1, \dots, n}, \dots, \{C^d_i\}_{i = 1, \dots, n}$ be $d+1$ KKM covers of~$K$. Then there are 
	$m = \frac{f_{d-1}(\partial K) -2}{d-1}$ distinct $(d+1)$-element subsets $I_1, \dots, I_m \subseteq \{1,2,\dots, n\}$ 
	and bijections $\pi_k\colon \{0, \dots, d\} \longrightarrow I_k$ such that
	$\bigcap_{j=0}^d C^j_{\pi_k(j)} \ne \emptyset$ for each $1 \le k \le m$.
\end{thm}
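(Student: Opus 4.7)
The plan is to deduce Theorem~\ref{thm:colored-KKM-pseudomfld} from the colorful Sperner theorem for pseudomanifolds, Theorem~\ref{thm:colored-sperner-pseudomfld}, via the standard compactness/fine-triangulation device that turns KKM covers into Sperner colorings. Write $B = \partial K$, fix a geometric realization of $K$, and identify $V(B) = \{1, \dots, n\}$.

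For every $\varepsilon > 0$, I would first choose a triangulation $K_\varepsilon$ of $K$, obtained for instance by iterated barycentric subdivision, in which every face has diameter at most $\varepsilon$ and whose boundary $\partial K_\varepsilon$ is a subdivision of $B$. Then I would use the $d+1$ KKM covers to build $d+1$ Sperner colorings $c^\varepsilon_0, \dots, c^\varepsilon_d \colon V(K_\varepsilon) \longrightarrow V(B)$, arranged so that $v \in C^j_{c^\varepsilon_j(v)}$ for every vertex $v$ and every $j$. This is done vertex by vertex: if $v$ lies on $\partial K$ in the relative interior of a face $\sigma \subseteq B$, then the KKM condition $\sigma \subseteq \bigcup_{i \in V(\sigma)} C^j_i$ supplies a vertex $i \in V(\sigma)$ with $v \in C^j_i$, and I set $c^\varepsilon_j(v) = i$; if $v$ is interior to $K$, the covering property $K = \bigcup_i C^j_i$ gives an admissible label and the Sperner constraint is vacuous.

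Applying Theorem~\ref{thm:colored-sperner-pseudomfld} to the tuple $(c^\varepsilon_0, \dots, c^\varepsilon_d)$ now produces $m \ge \frac{f_{d-1}(B) - 2}{d-1}$ pairwise distinct $(d+1)$-subsets $I^\varepsilon_1, \dots, I^\varepsilon_m \subseteq V(B)$ together with rainbow faces $\sigma^\varepsilon_k \subseteq K_\varepsilon$, $1 \le k \le m$, whose vertices $v^\varepsilon_{k,0}, \dots, v^\varepsilon_{k,d}$ (not necessarily distinct) satisfy $\{c^\varepsilon_0(v^\varepsilon_{k,0}), \dots, c^\varepsilon_d(v^\varepsilon_{k,d})\} = I^\varepsilon_k$. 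Defining $\pi^\varepsilon_k(j) := c^\varepsilon_j(v^\varepsilon_{k,j})$ yields a bijection $\pi^\varepsilon_k \colon \{0, \dots, d\} \longrightarrow I^\varepsilon_k$ with $v^\varepsilon_{k,j} \in C^j_{\pi^\varepsilon_k(j)}$ for every $j$, while the mesh bound forces all $d+1$ of these witness points to lie within distance $\varepsilon$ of one another inside $\sigma^\varepsilon_k$.

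Finally, I would let $\varepsilon \to 0$. Since $V(B)$ is finite, the tuple $(I^\varepsilon_1, \dots, I^\varepsilon_m, \pi^\varepsilon_1, \dots, \pi^\varepsilon_m)$ takes only finitely many values, and the pigeonhole principle extracts a subsequence on which it is constant, say equal to $(I_1, \dots, I_m, \pi_1, \dots, \pi_m)$; the subsets $I_k$ remain pairwise distinct. Compactness of $K$ then allows a further extraction (one for each of the finitely many indices $k$) so that $v^\varepsilon_{k,0} \to x_k$ for some $x_k \in K$, and the vanishing diameters force $v^\varepsilon_{k,j} \to x_k$ for every $j$. Closedness of each $C^j_{\pi_k(j)}$ now yields $x_k \in \bigcap_{j=0}^d C^j_{\pi_k(j)}$, producing the required nonempty intersection. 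The one step that is not pure bookkeeping is the construction of the fine boundary-respecting triangulations $K_\varepsilon$; for pseudomanifolds with boundary this is routine via iterated barycentric subdivision, but it is the only place where the underlying geometry, rather than the combinatorics of Sperner colorings, enters the argument.
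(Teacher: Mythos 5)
Your proposal is correct and follows essentially the same route as the paper: triangulate finely, convert the KKM covers into $d+1$ Sperner colorings, invoke Theorem~\ref{thm:colored-sperner-pseudomfld}, and pass to the limit using finiteness of the label data and compactness of $K$. The only genuine (and minor) variation is in how the Sperner colorings are built: the paper first normalizes the cover so that each $C^j_i$ avoids the faces of $\partial K$ not containing $i$ and then labels by the smallest admissible index, whereas you bypass the normalization by choosing the label directly from the vertices of the carrier face $\sigma$ of $B$ (and arbitrarily in the interior). This is a small simplification; both yield valid Sperner colorings with $v \in C^j_{c_j(v)}$, which is all the limit argument needs. Your spelled-out pigeonhole-then-compactness step, including the observation that the vanishing mesh collapses all $d+1$ witness vertices to a common limit point, is exactly the content the paper compresses into ``for $\varepsilon \to 0$ ... by compactness of $K$.''
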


\begin{proof}
	Assume w.l.o.g. that $C^j_i$ is disjoint from the facet opposite the vertex~$i$.
	Let $K'$ be a subdivision of $K$ where each face has diameter at most~$\varepsilon$. Define $d+1$ Sperner colorings
	of $K'$ with respect to~$B$ in the following way: $c_j(v) = i$ if $v \in C^j_i$ but $v \notin C^j_k$ for $k < i$.
	By Theorem~\ref{thm:colored-sperner-pseudomfld} there are $m$ distinct 
	$(d+1)$-element subsets $I_1^\varepsilon, \dots, I_m^\varepsilon \subseteq \{1,2,\dots, n\}$ such that for each $1 \le k \le m$
	there is a rainbow face with respect to $c_0, \dots, c_d$ exhibiting the colors in~$I_k^\varepsilon$.
	In particular, there is for each~$k$ a point $x$ at distance at most~$\varepsilon$ from $C^j_{\pi_k(\ell)}$, $\ell \in \{0, \dots, d\}$,
	where $\pi_k\colon \{0, \dots, d\} \longrightarrow I_k^\varepsilon$ is a bijection.
	For $\varepsilon \longrightarrow 0$ there are at least $m$ subsets $I_1, \dots, I_m \subseteq \{1,2,\dots, n\}$ 
	that are exhibited by rainbow faces with respect to $c_0, \dots, c_d$ infinitely many times. By compactness of~$K$ we have
	the desired result for $\varepsilon \longrightarrow 0$.
\end{proof}

%%%%%%%%%%%%%%%%%%%%%%%%%%%%%%%%%%%%%%%%%%%%%%%%%%%%%%%%%%%%%%%%%%%%%%%%%%%%%%%%%%%%%

\bibliographystyle{amsplain}

\begin{thebibliography}{10}

\bibitem{alon1987}
Noga Alon, \emph{Splitting necklaces}, Adv. Math. \textbf{63} (1987), no.~3,
  247--253.

\bibitem{alon2006}
Noga Alon, Dana Moshkovitz, and Shmuel Safra, \emph{Algorithmic construction of
  sets for k-restrictions}, ACM Transactions on Algorithms (TALG) \textbf{2}
  (2006), no.~2, 153--177.

\bibitem{alon1986}
Noga Alon and Douglas~B. West, \emph{{The Borsuk--Ulam theorem and bisection of
  necklaces}}, Proc. Amer. Math. Soc. \textbf{98} (1986), no.~4, 623--628.

\bibitem{atanassov1996}
K.~T. Atanassov, \emph{{On Sperner's lemma}}, Studia Sci. Math. Hungar.
  \textbf{32} (1996), no.~1, 71--74.

\bibitem{avis1984}
David Avis, \emph{Non-partitionable point sets}, Information Proc. Letters
  \textbf{19} (1984), no.~3, 125--129.

\bibitem{aziz2016}
Haris Aziz and Simon Mackenzie, \emph{A discrete and bounded envy-free cake
  cutting protocol for any number of agents}, arXiv preprint arXiv:1604.03655
  (2016).

\bibitem{bapat1989}
R.~B. Bapat, \emph{{A constructive proof of a permutation-based generalization
  of Sperner's lemma}}, Mathematical Programming \textbf{44} (1989), no.~1-3,
  113--120.

\bibitem{barany1982}
Imre B{\'a}r{\'a}ny, \emph{{A generalization of Carath{\'e}odory's theorem}},
  Discrete Math. \textbf{40} (1982), no.~2-3, 141--152.

\bibitem{barnette1973}
David Barnette, \emph{A proof of the lower bound conjecture for convex
  polytopes}, Pacific J. Math. \textbf{46} (1973), no.~2, 349--354.

\bibitem{blagojevic2015}
Pavle V.~M. Blagojevic, Florian Frick, Albert Haase, and G{\"u}nter~M. Ziegler,
  \emph{{Topology of the Gr{\"u}nbaum--Hadwiger--Ramos hyperplane mass
  partition problem}}, arXiv preprint arXiv:1502.02975 (2015).

\bibitem{blagojevic2016}
Pavle V.~M. Blagojevic, Florian Frick, Albert Haase, and G{\"u}nter~M. Ziegler, \emph{Hyperplane mass partitions via relative equivariant obstruction
  theory}, Documenta Math. \textbf{21} (2016), 735--771.

\bibitem{deLoera2002}
Jesus~A. De~Loera, Elisha Peterson, and Francis~Edward Su, \emph{{A polytopal
  generalization of Sperner's lemma}}, J. Combin. Theory, Ser.~A \textbf{100}
  (2002), no.~1, 1--26.

\bibitem{epping2004}
Thomas Epping, Winfried Hochst{\"a}ttler, and Peter Oertel, \emph{Complexity
  results on a paint shop problem}, Discrete Appl. Math. \textbf{136} (2004),
  no.~2, 217--226.

\bibitem{fogelsanger1988}
Allen Fogelsanger, \emph{The generic rigidity of minimal cycles}, Dissertation,
  Cornell University (1988).

\bibitem{gale1984}
David Gale, \emph{Equilibrium in a discrete exchange economy with money},
  Internat. J. Game Theory \textbf{13} (1984), no.~1, 61--64.

\bibitem{goldberg1985}
Charles~H. Goldberg and Douglas~B. West, \emph{Bisection of circle colorings},
  SIAM J. Algebraic Discrete Methods \textbf{6} (1985), no.~1, 93--106.

\bibitem{grunbaum1965}
Branko Gr{\"u}nbaum, \emph{On the facial structure of convex polytopes}, Bull.
  Amer. Math. Soc. \textbf{71} (1965), no.~3, 559--560.

\bibitem{hadwiger1966}
Hugo Hadwiger, \emph{{Simultane Vierteilung zweier K{\"o}rper}}, Archiv der
  Mathematik \textbf{17} (1966), no.~3, 274--278.

\bibitem{kalai1987}
Gil Kalai, \emph{Rigidity and the lower bound theorem 1}, Invent. math.
  \textbf{88} (1987), no.~1, 125--151.

\bibitem{knaster1929}
Bronis{\l}aw Knaster, Casimir Kuratowski, and Stefan Mazurkiewicz, \emph{{Ein
  Beweis des Fixpunktsatzes f{\"u}r n-dimensionale Simplexe}}, Fundamenta
  Mathematicae \textbf{14} (1929), no.~1, 132--137.

\bibitem{mani2006}
Peter Mani-Levitska, Sini{\v{s}}a Vre{\'c}ica, and Rade {\v{Z}}ivaljevi{\'c},
  \emph{Topology and combinatorics of partitions of masses by hyperplanes},
  Adv. Math. \textbf{207} (2006), no.~1, 266--296.

\bibitem{matousek2008}
Ji{\v{r}}{\'\i} Matou{\v{s}}ek, \emph{{Using the {Borsuk--Ulam} Theorem.
  {L}ectures on Topological Methods in Combinatorics and Geometry}}, second
  ed., Universitext, Springer-Verlag, Heidelberg, 2008.

\bibitem{meunier2006}
Fr{\'e}d{\'e}ric Meunier, \emph{{Sperner labellings: a combinatorial
  approach}}, J. Combin. Theory, Ser.~A \textbf{113} (2006), no.~7, 1462--1475.

\bibitem{meunier2008}
Fr{\'e}d{\'e}ric Meunier, \emph{Discrete splittings of the necklace}, Math. Oper. Res.
  \textbf{33} (2008), no.~3, 678--688.

\bibitem{musin2014}
Oleg~R. Musin, \emph{{Around Sperner's lemma}}, arXiv preprint arXiv:1405.7513
  (2014).

\bibitem{musin2015}
Oleg~R. Musin, \emph{{Extensions of Sperner and Tucker's lemma for manifolds}}, J.
  Combin. Theory, Ser.~A \textbf{132} (2015), 172--187.

\bibitem{musin2015-2}
Oleg~R. Musin, \emph{{KKM type theorems with boundary conditions}}, arXiv preprint
  arXiv:1512.04612 (2015).

\bibitem{ramos1996}
Edgar~A. Ramos, \emph{Equipartition of mass distributions by hyperplanes},
  Discrete Comput. Geom. \textbf{15} (1996), no.~2, 147--167.

\bibitem{sperner1928}
Emanuel Sperner, \emph{{Neuer Beweis f{\"u}r die Invarianz der Dimensionszahl
  und des Gebietes}}, Abhandlungen aus dem Mathematischen Seminar der
  Universit{\"a}t Hamburg, vol.~6, Springer, 1928, pp.~265--272.

\bibitem{su1999}
Francis~Edward Su, \emph{Rental harmony: Sperner's lemma in fair division},
  Amer. Math. Monthly \textbf{106} (1999), no.~10, 930--942.

\bibitem{tay1995}
Tiong-Seng Tay, \emph{Lower-bound theorems for pseudomanifolds}, Discrete
  Comput. Geom. \textbf{13} (1995), no.~1, 203--216.

\end{thebibliography}

\end{document}